\definecolor{byzantium}{rgb}{0.44, 0.16, 0.39}
\numberwithin{equation}{section}
\newtheorem{theorem}{Theorem}[section]
\newtheorem{corollary}[theorem]{Corollary}
\newtheorem{proposition}[theorem]{Proposition}
\newtheorem{prop}[theorem]{Proposition}
\newtheorem{lem}[theorem]{Lemma}
\newtheorem{claim}[theorem]{Claim}
\newtheorem{ques}[theorem]{Question}
\newtheorem{definition}[theorem]{Definition}
\newtheorem{defn}[theorem]{Definition}
\newtheorem{quasidefn}[theorem]{``Definition''}
\theoremstyle{remark}
\newtheorem{remark}[theorem]{Remark}
\numberwithin{equation}{section}
\def\@rst #1 #2other{#1}
\newcommand\MR[1]{\relax\ifhmode\unskip\spacefactor3000 \space\fi
  \MRhref{\expandafter\@rst #1 other}{#1}}
\newcommand{\MRhref}[2]{\href{http://www.ams.org/mathscinet-getitem?mr=#1}{MR#2}}
\def\MR#1{\href{http://www.ams.org/mathscinet-getitem?mr=#1}{MR#1}}
\newcommand{\C}{\mathbbm{C}}
\newcommand{\E}{\mathbbm{E}}
\newcommand{\Z}{\mathbbm{Z}}
\newcommand{\R}{\mathbbm{R}}
\newcommand{\aconst}{{\frac{\chi(M)}{6}}}
\newcommand{\Vol}{{\mathrm{Vol}}}
\newcommand{\Len}{{\mathrm{Len}}}
\newcommand{\eps}{\varepsilon }
\let\Re\undefined
\DeclareMathOperator{\Re}{Re}
\DeclareMathOperator{\SLE}{SLE}
\def\cS{\mathcal{P}}
\def\cC{\mathcal{C}}
\newcommand{\aryb}{\begin{eqnarray*}}
\newcommand{\arye}{\end{eqnarray*}}
\def\alb#1\ale{\begin{align*}#1\end{align*}}
\def\allb#1\alle{\begin{align}#1\end{align}}
\newcommand{\eqb}{\begin{equation}}
\newcommand{\eqe}{\end{equation}}
\newcommand{\eqbn}{\begin{equation*}}
\newcommand{\eqen}{\end{equation*}}
\newcommand{\BB}{\mathbbm}
\newcommand{\op}{\operatorname}
\newcommand{\ep}{\epsilon }
\newcommand{\rta}{ \rightarrow }
\newcommand{\wt}{\widetilde}
\newcommand{\wh}{\widehat}
\newcommand{\mcl}{\mathcal}
\newcommand{\tr}{\op{tr}}
\newcommand{\tloop}{\mathrm{loop}}
\newcommand{\qq}{Q}
\newcommand{\cc}{{\mathbf{c}}}
\DeclareMathAlphabet{\mathpzc}{OT1}{pzc}{m}{it}
\begin{document}

\author{
\begin{tabular}{c}Morris Ang\\[-5pt]\small MIT\end{tabular}\;
\begin{tabular}{c}Minjae Park\\[-5pt]\small MIT\end{tabular}\;
\begin{tabular}{c}Joshua Pfeffer\\[-5pt]\small MIT\end{tabular}\;
\begin{tabular}{c}Scott Sheffield\\[-5pt]\small MIT\end{tabular}}

\title{Brownian loops and the central charge of a Liouville random surface}

\date{}

\maketitle

\begin{abstract}
We explore the geometric meaning of the so-called {\em zeta-regularized} determinant of the Laplace-Beltrami operator on a compact surface, with or without boundary.  We relate the $(-c/2)$-th power of the determinant of the Laplacian to the appropriately regularized partition function of a Brownian loop soup of intensity $c$ on the surface.  This means that, in a certain sense, {\em decorating} a random surface by a Brownian loop soup of intensity $c$ corresponds to {\em weighting} the law of the surface by the $(-c/2)$-th power of the determinant of the Laplacian.

Next, we introduce a method of regularizing a Liouville quantum gravity (LQG) surface (with some matter
central charge parameter $\cc$) to produce a smooth surface. And we show that weighting the law
of this random surface by the $( -\cc'/ 2)$-th power of the Laplacian determinant has precisely the
effect of changing the matter central charge from $\cc$ to $\cc + \cc'$. Taken together with the earlier results, this provides a way of interpreting an LQG surface of matter central charge $\cc$ as a pure LQG surface {\em decorated} by a Brownian loop soup of intensity $\cc$.

Building on this idea, we present several open problems about random planar maps and their continuum analogs. Although the original construction of LQG is well-defined only for $\cc \leq 1$, some of the constructions and questions also make sense when $\cc>1$.
\end{abstract}

\tableofcontents

\section{Introduction} \label{sec-intro}

Liouville quantum gravity (LQG) surfaces are actively studied in the probability community not only for their applications to conformal field theory, string theory, and other areas of mathematical physics, but also for their close connection to the geometry of many classes of random planar maps.
The physics literature on this topic is vast, including many perspectives---such as Coulomb gas theory and conformal field theory---that we will not survey here. An early definition of LQG surfaces, as presented by Polyakov and others in the 1980s, can be formulated roughly as follows. (See also Proposition~\ref{prop_polyakov_alvarez} below.)
\begin{quasidefn}\label{def-original}
Let $\cc$ be some value in $(-\infty,1]$. An LQG surface with matter central charge\footnote{In the physics literature, the matter central charge is often denoted as $\cc_{\text{M}}$ to distinguish it from the Liouville central charge, which they denote as $\cc_{\text{L}}$.  We do not use this notation since we do not refer to the Liouville central charge in this paper.} $\cc$ and  a specified topology (such as the sphere, disk, or torus) is a random surface whose law is ``the uniform measure on the space of surfaces with this topology'' weighted by the $(-\cc/2)$-th power of the determinant of the Laplace-Beltrami operator of the surface.
\end{quasidefn}

The main goal of the current paper is to provide a rigorous geometric interpretation of the above, and to justify and clarify its relationship to the other mathematical definitions of LQG surfaces that have been used in recent years. We will also make connections between the classical theory of determinants of Laplace-Beltrami operators on smooth surfaces and the more recent literature about Brownian loop on those surfaces.

When $\cc$ is a positive integer, the quantity $(\det \Delta)^{-\cc/2}$ represents, at least heuristically, the partition function of a $\cc$-dimensional Gaussian free field on the surface. (We recall the definition of the Gaussian free field in Section~\ref{sec-square-subdivision}; for a more comprehensive treatment, see, e.g.,~\cite{shef-gff, berestycki2015introduction, werner2020lecture}.) This heuristic relationship corresponds to an exact identity in the discrete setting, which we describe later in this section.  The partition function $(\det \Delta)^{-\cc/2}$ heuristically\footnote{This heuristic is described more formally, e.g., in~\cite{d'hoker-phong}.  In summary, the Polyakov partition function---i.e., the partition function of LQG---is an integral of the exponential of minus the Polyakov action over all embedded manifolds in spacetime, where spacetime is assumed to be $\R^{\cc}$.  And, if one first fixes the metric and integrates the exponential of minus the Polyakov action over all embeddings of the manifold equipped with that metric, then the resulting integral is a Gaussian integral and is heuristically seen to equal the determinant of the Laplacian raised to the power $-\cc/2$.} measures the ``number of ways to embed the surface into $\mathbb R^{\cc}$.'' Informally, this means that, if one chooses a uniformly random surface {\em embedded} in $\mathbb R^{\cc}$ and then forgets the embedding, then the resulting un-embedded surface has the law of an LQG surface with matter central charge $\cc$.  Note, however, that this intuition holds only for positive integers $\cc$, while the idea of weighting by $(\det \Delta)^{-\cc/2}$ makes sense heuristically for any real value of $\cc$.

Much of the recent mathematical work on LQG surfaces rests on a completely different formulation of LQG, derived from the above definition by David~\cite{david-conformal-gauge} and Distler-Kawai~\cite{dk-qg} via the so-called {\em DDK ansatz}.  In the case of simply connected topology, this alternative formulation of LQG defines an LQG surface with matter central charge $\cc$ as a random Riemannian manifold with the specified topology whose Riemannian metric tensor is heuristically given by multiplying the constant curvature metric by the exponential of some multiple of the Gaussian free field (GFF).

We review the definition of the GFF in Section~\ref{sec-square-subdivision}.
By considering different variants of the Gaussian free field---different boundary conditions, centering the field in different ways, conditioning on the total LQG area of the domain, adding logarithmic singularities, etc.---it is possible to define many different types of LQG surfaces whose corresponding fields look {\em locally} like the Gaussian free field.

At first glance, these definitions are also heuristic because the Gaussian free field $h$ is a random distribution, and cannot be exponentiated.  But by replacing $h$ with a mollified version of the field and taking limits, it is possible to construct LQG surfaces rigorously as random surfaces equipped with a conformal structure, a volume measure~\cite{hoegh1971general, kahane} (see also~\cite{rhodes-vargas-review,berestycki-gmt-elementary,aru-gmc-survey, shef-kpz}), and a distance function~\cite{dddf-lfpp,local-metrics,lqg-metric-estimates,gm-confluence,gm-uniqueness,gm-coord-change}.\footnote{This distance function is often referred to as a metric in the literature, in the sense of metric spaces.  We avoid using the term ``metric'' in this context to avoid confusion with the notion of a Riemannian metric.}

\begin{figure}[t!]
\centering
        \includegraphics[width=.8\textwidth]{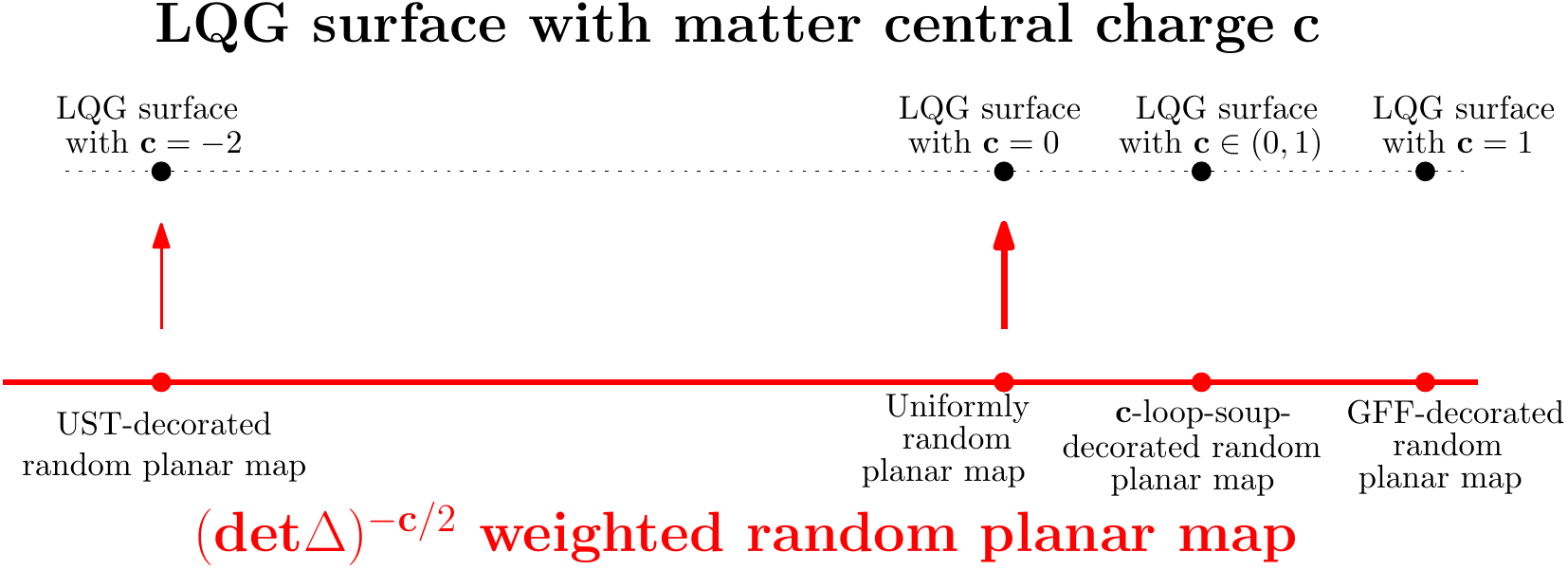}
    \caption{The uniformly random planar map with $n$ vertices {\em weighted} by the $(-\cc/2)$-th power of the Laplacian determinant, for the appropriate notion of discrete Laplacian, can be interpreted as a uniform planar map {\em decorated} by some number of spanning trees, GFF instances, or $\cc$-intensity (random walk) loop soups.  For each $\cc \leq 1$, it is believed to converge as $n \to \infty$ to an LQG surface with matter central charge $\cc$: for example, if the random planar map has the topology of the sphere, the limit should be the LQG sphere.  However, this has been proven only for two values of $\cc$.  For $\cc=0$, the convergence is known in several topologies: as a Gromov-Hausdorff limit of random metric spaces, as a weak limit of random measures induced by Cardy embeddings (in the case of uniform triangulations), and in the so-called ``mating-of-trees'' or ``peanosphere'' topology described in~\cite{wedges}.  And, for $\cc=-2$, the convergence is known in the mating-of-trees/peanosphere topology.  (The other mating-of-trees convergence results apply to models such as FK clusters, bipolar orientations, Schnyder woods, and the Ising and Potts models. Weighting by the partition functions of models such as these is {\em believed}~\cite{bpz-conformal-symmetry} to be equivalent in the $n \rta \infty$ limit to weighting by powers of the Laplacian determinant.  See \cite{wedges, ghs-mating-survey}.)} \label{RPMchart}
\end{figure}

Although the approaches discussed above give satisfactory mathematical definitions of LQG surfaces, it is not obvious how they relate to ``Definition''~\ref{def-original}. 

One approach to making sense of ``Definition''~\ref{def-original} is by considering limits of discrete models. In the special case of matter central charge $\cc = 0$, it is possible to make sense of sampling a surface uniformly according to ``uniform measure on the space of surfaces with this topology'' by considering uniform random planar maps (or uniform $p$-angulations for $q=3$ or $q$ even) with a fixed number of edges and taking a limit as the number of edges grows to infinity---either in the Gromov-Hausdorff sense to obtain a  limiting random distance function~\cite{lqg-tbm1,lqg-tbm2,lqg-tbm3,legall-uniqueness,miermont-brownian-map} or under certain ``discrete conformal embeddings into the plane'' to additionally obtain a random measure~\cite{hs-cardy-embedding}.  The limiting random distance function and measure  are equal in law to the random LQG distance function and measure for $\cc = 0$.%

We can try to extend this approach to general $\cc$ by considering classes of random planar maps sampled with probability proportional to the $(-\cc/2)$-th power of the determinant of its  \emph{discrete Laplacian}.  (We define two different notions of the discrete Laplacian in Section~\ref{sec-discrete}.)  However, this perspective does \emph{not} yield a rigorous version of ``Definition''~\ref{def-original} because we do not know how to prove that such random planar maps converge in the distance function or measure sense for general $\cc$.  This situation is summarized in Figure~\ref{RPMchart}.

In this paper, we make ``Definition''~\ref{def-original} precise in the continuum by taking the ``uniform measure'' in ``Definition''~\ref{def-original} to be the one obtained as the measure and distance function scaling limits of random planar maps in the special case $\cc=0$, and then by defining the laws for other values of $\cc$ by weighting the $\cc = 0$ law by the appropriate power of the determinant of the Laplace-Beltrami operator.  To implement this approach, we must address two main issues.  First, it is not {\em a priori} clear how to define the determinant of the Laplacian: the spectrum of the Laplacian of a smooth compact Riemannian manifold (which is known to be discrete) is unbounded, so the product of its eigenvalues is infinite; and for random fractal surfaces, the situation is still worse, as even defining these eigenvalues would take some work.  Second, the laws of the quantum gravity surfaces are mutually singular for different values of the matter central charge, so the law of a surface for one value of the matter central charge cannot, strictly speaking, be expressed as the weighted law of an LQG surface for a different value of the matter central charge.

In this paper, we overcome these difficulties by regularization. We first define a  manifold approximation of an LQG surface.  %
The most obvious regularization approach is to regularize the GFF $h$ at a ``Euclidean scale''---for instance, by replacing $h$ with its circle average, or by mollifying $h$ with a smooth bump function.  However, this turns out not to be the correct approach, as it does not commute with scaling in the natural way; i.e., the amount a portion of the surface gets ``smoothed'' depends on its size in the conformal embedding, not on its intrinsic size. The second most obvious approach---considering the surface obtained from an LQG surface after a finite time interval of Ricci flow---should be correct in spirit, but it is more complicated to construct and work with because it is not a linear function of $h$.  Instead, this paper will use a third type of regularization, which is morally a discretized version of the Ricci flow approach. We regularize the field $h$ by projecting it onto a finite-dimensional subspace with prescribed averages on dyadic squares with roughly the same size w.r.t.\ the LQG measure; see Section~\ref{sec-square-subdivision} for details and Figure~\ref{fig-square-subdivision-realization} for a simulation of this square subdivision. Essentially, we regularize the field, not at a Euclidean scale, but at a ``quantum scale''. 

We use this regularized version $h^\ep$ of $h$ to define a random manifold approximation of an LQG surface.  The heuristic LQG metric is most often expressed in the literature as ``$e^{\gamma h} (dx^2 + dy^2)$'', but because we regularized the field at a quantum rather than Euclidean scale, our regularized LQG metric contains a factor of $2/Q$ rather than $\gamma$ in the exponent.  (See~\eqref{eqn-def-gamma-Q} for definitions of both $\gamma$ and $Q$.) In Section~\ref{sec-reg-intuition}, we explain why our form for the heuristic LQG metric, with $2/Q$ instead of $\gamma$, can be viewed as more natural from a geometric perspective.

We then weight the law of our quantum-regularized LQG surface by the so-called zeta-regularized determinant of the Laplace-Beltrami operator associated to the manifold.  We define the zeta-regularized determinant of the Laplacian in Section~\ref{sec-reg-det} and interpret this quantity in Theorem~\ref{thm-loop-det} below.  We then obtain a ``regularized'' interpretation of ``Definition''~\ref{def-original}, which is also illustrated by Figure~\ref{RLQGchart}.  Loosely stated, our result is as follows:

\begin{theorem}\label{thm-weighting}
For any $\cc,\cc'$ with $\cc$ and $\cc+\cc'$ in the range $(-\infty,1)$, the law of the quantum-regularized LQG surface just described with matter central charge $\cc+\cc'$ is equal to the law of the same quantum-regularized LQG surface with matter central charge $\cc$ weighted by the $(-\cc'/2)$-th power of the zeta-regularized determinant of the Laplacian.
\end{theorem}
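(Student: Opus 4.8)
The plan is to reduce the statement to a *deterministic* identity—the Polyakov--Alvarez anomaly formula (referenced as Proposition~\ref{prop_polyakov_alvarez} above)—applied pointwise to each realization of the regularized surface, and then to track how that identity interacts with the law of the quantum-regularized field $h^\eps$. First I would fix the discretization parameter $\eps$ and recall the explicit description of the quantum-regularized surface from Section~\ref{sec-square-subdivision}: it is the surface $(\mathbb C, e^{(4/Q)\,h^\eps}(dx^2+dy^2))$ (or a torus/disk analogue), where $h^\eps$ is the projection of $h$ onto the finite-dimensional space of functions constant on the dyadic squares of the quantum square subdivision. Crucially, for a fixed $\eps$ this is a genuine smooth (piecewise-constant conformal factor, suitably interpreted) compact Riemannian surface, so its zeta-regularized determinant $\det{}'\Delta$ is well-defined by the construction of Section~\ref{sec-reg-det}.

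The heart of the argument is the following. Changing the matter central charge from $\cc$ to $\cc+\cc'$ changes the reference measure on fields only through the constants $\gamma$ and $Q$ appearing in the definition of $h$ and of the regularized metric; more precisely, the laws of the two quantum-regularized surfaces are both of the form ``(a Gaussian-type law on the projected field) $\times$ (a conformal-factor-dependent density),'' and I would compute the Radon--Nikodym derivative between them explicitly. By the Polyakov--Alvarez formula, for a conformal change of metric $g \mapsto e^{2\phi} g$ on a compact surface one has
\[
\log\frac{\det{}'\Delta_{e^{2\phi}g}}{\det{}'\Delta_{g}}
= -\frac{1}{12\pi}\int \bigl(|\nabla_g \phi|^2 + 2 K_g \phi\bigr)\,dA_g \;+\;(\text{area/boundary terms}),
\]
and the point is that the exponent $-\cc'/2$ times this expression is, up to the $\cc$-independent normalization, *exactly* the logarithm of the ratio of the two field-laws that distinguishes central charge $\cc+\cc'$ from central charge $\cc$. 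So the weighting by $(\det{}'\Delta)^{-\cc'/2}$ precisely converts one Gaussian-type density into the other. I would carry this out in the order: (i) write both laws as densities against a common reference measure (e.g.\ the law at some fixed base central charge, or Lebesgue measure on the finite-dimensional coefficient space); (ii) identify the ratio of densities with $\exp\bigl(-\tfrac{\cc'}{2}\cdot(\text{Polyakov--Alvarez functional of }h^\eps)\bigr)$; (iii) invoke Proposition~\ref{prop_polyakov_alvarez} to recognize that functional as $\log\det{}'\Delta$ (up to constants that cancel because we are comparing two laws, not computing an absolute partition function); (iv) conclude by matching normalizing constants, using that both sides are probability measures when $\cc,\cc+\cc' \in (-\infty,1)$ so the densities integrate finitely—this is where the range restriction enters, via finiteness of the relevant Gaussian/GMC moments.

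The main obstacle, I expect, is step (ii)–(iii): verifying that the *zeta-regularized* determinant of the piecewise-constant-conformal-factor surface genuinely obeys the Polyakov--Alvarez anomaly formula with the conformal factor $(2/Q)h^\eps$, with all the curvature and boundary terms bookkept correctly. The square-subdivision metric is only piecewise smooth, so one must either check that the anomaly formula extends to this degree of regularity (approximating by smooth metrics and passing to the limit, controlling the zeta-determinant under such limits) or set up the regularized metric to be genuinely smooth from the start. A secondary subtlety is handling the zero mode / Liouville-constant direction of $h$ and the conditioning on total quantum area: one must make sure the comparison of laws is done on the same slice (e.g.\ unit area, or with the Liouville mode integrated out identically on both sides), so that the $\cc$-dependent and $\cc$-independent constants are correctly separated. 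Once the deterministic anomaly identity is in hand at the level of a fixed field configuration, the probabilistic conclusion is a direct change-of-measure computation.
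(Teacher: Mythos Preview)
Your overall strategy---apply Polyakov--Alvarez to express $\log{\det}_\zeta'\Delta$ as a Dirichlet energy of the conformal factor, then recognize the reweighting as a change of Gaussian variance on a finite-dimensional coefficient space---is the paper's approach. But there is a structural gap in your step (i). The square subdivision $\cS^\ep$ is \emph{random and $\cc$-dependent}: whether a dyadic square $S$ belongs to $\cS^\ep$ is decided by $A_h(S)=e^{h_S/Q}|S|$, hence by $Q=Q(\cc)$. So the ``finite-dimensional coefficient space'' you want as a common reference is itself $\cc$-dependent, and it is not even clear the two laws are mutually absolutely continuous. The paper's key observation (Lemmas~\ref{lem-measurable} and~\ref{lem-alpha-law}) is that, parametrizing by $x_S:=\alpha^\cS_S/Q$ rather than $\alpha^\cS_S$, the event $\{\cS^\ep=\cS\}$, the field $h^\ep/Q$, the normalizing constant $C$, and hence the metric $g$ all become $\cc$-independent functions of the $x_S$; the only remaining $\cc$-dependence is that each $x_S$ has density $\propto e^{-Q^2x_S^2/2}$. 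In these coordinates the Dirichlet energy in~\eqref{eqn-reg-determinant-laplacian} is $\sum_S x_S^2$, and the reweighting simply shifts $Q^2\mapsto Q^2-\cc'/6=Q_{\mathrm{new}}^2$. Without this reparametrization your step (i) does not get off the ground.

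Two smaller corrections. Your worry about extending Polyakov--Alvarez to piecewise-smooth conformal factors is moot: the paper \emph{defines} ${\det}_\zeta'\Delta$ for the regularized metric via~\eqref{eq-polyakov-alvarez-closed} precisely because that metric is not smooth, so there is nothing to prove there. And the range restriction has nothing to do with GMC moment finiteness: the precise version (Theorem~\ref{thm-laplacian-reweighting}) holds for all $\cc,\cc+\cc'<25$, because the proof conditions on $\#\cS^\ep=n$---a step you do not mention, and which is needed so that the normalizing constants in the Gaussian densities depend only on $n,\cc,\cc'$ and not on the particular partition $\cS$. The restriction to $\cc<1$ in Theorem~\ref{thm-weighting} is purely for the interpretation via the quantum sphere.
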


We state this result more precisely in Theorem~\ref{thm-laplacian-reweighting}.  In stating and interpreting this more precise formulation of our result, we will make two important observations:

\begin{enumerate}
\item
The quantum-regularized LQG surface in Theorem~\ref{thm-weighting} can be taken to be a surface that heuristically approximates a type of quantum sphere; see Claim~\ref{claim-sphere}.
\item
We have stated Theorem~\ref{thm-weighting} only for $\cc$ and $\cc + \cc'$ less than $1$, but Theorem~\ref{thm-laplacian-reweighting} applies to all values of $\cc$ and $\cc+\cc'$ less than $25$.  For this extended range of matter central charge, the LQG measure and distance function
 are no longer defined, so it is not clear \textit{a priori} how any notion of ``regularized LQG surface'' we define would relate to LQG in this regime. In Section~\ref{sec-interpretation2}, we interpret Theorem~\ref{thm-laplacian-reweighting} for matter central charge between $1$ and $25$ in the context of the recent work~\cite{ghpr-central-charge} on LQG in this phase.  This will help explain an apparent conflict between the model of LQG for matter central charge in $(1,25)$ in~\cite{ghpr-central-charge} and suggested descriptions of LQG in this phase in the physics literature.
\end{enumerate}

\begin{figure}[t!]
\begin{center}
\centering
        \includegraphics[width=.8\textwidth]{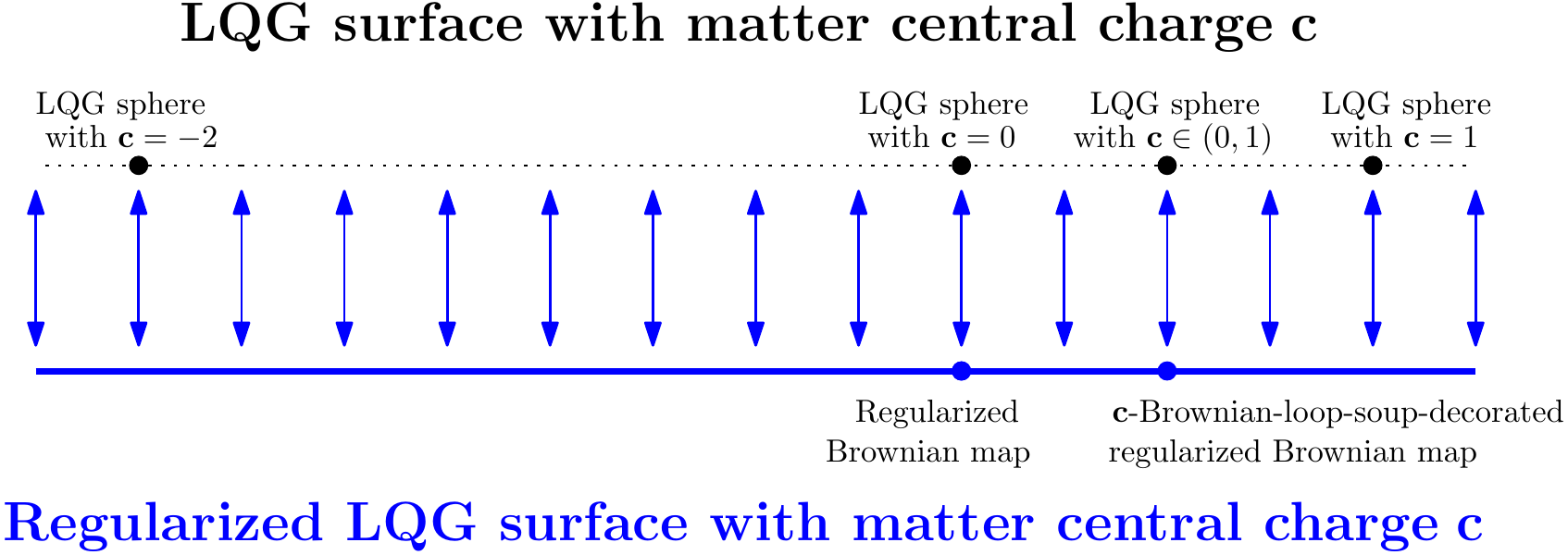}
    \caption{Theorem~\ref{thm-weighting} states that a regularized LQG surface with matter central charge $\cc$ can be obtained by starting with a regularized LQG surface with matter central charge $0$ (a.k.a.\ the regularized Brownian map) and {\em weighting} by $(\det_{\zeta}' \Delta)^{-\cc/2}$. Thus, the blue line here is analogous to the red line in Figure~\ref{RPMchart} but with ``random planar map'' replaced by ``regularized Brownian map'' and the determinant interpreted in a continuum sense. In this figure, however, the correspondence between the horizontal lines is straightforward for {\em all} $\cc < 25$. (Note that,  in the figure, we have implicitly related the zeta-regularized determinant of the Laplacian to the partition function of the Brownian loop soup; see the remark after Theorem~\ref{thm-loop-det} and the discussion at the end of Section~\ref{sec-relate}.) }  \label{RLQGchart}
\end{center}
\end{figure}

We define our manifold approximation of LQG and prove Theorem~\ref{thm-weighting} in the second half of the paper. In the first half we present and explore the definition of the zeta-regularized determinant of the Laplacian.

As we mentioned above, the determinant of the Laplacian is not strictly defined on a smooth compact Riemannian manifold, since the spectrum of the Laplacian is unbounded and therefore the product of its eigenvalues is infinite.  To circumvent this difficulty, we make sense of the determinant by the standard technique of zeta-regularization, which we describe in Section~\ref{sec-reg-det}.  At first glance, the definition of the zeta-regularized determinant might seem to be {\em ad hoc} or to be just one of many equally reasonable ways to define a Laplacian determinant on a manifold.  We show that this regularization of the determinant is indeed canonical from a geometric perspective by proving a relation between this quantity and an appropriately regularized mass of loops on the surface with respect to the \emph{Brownian loop measure}.  This relation mirrors an exact identity in the discrete setting---which we describe in Section~\ref{sec-discrete}---between the determinant of the discrete (random walk) Laplacian on a graph  and masses of loops  under discrete loop measures on the graph.

The Brownian loop measure was introduced by \cite{lawler-werner-soup} for planar domains.  And, as described in~\cite[Section 6.3]{werner-self-avoiding-loops}, their definition extends easily to the general setting of a Riemann surface $M$, possibly with boundary, equipped with a smooth metric $g$.  We review this material in Section~\ref{sec-brownian-loops}.

The mass of Brownian loops, like the Laplacian determinant, needs to be regularized, since there are infinitely many arbitrarily small Brownian loops on a surface---and, in the case of closed manifolds, infinitely many arbitrarily large Brownian loops as well.  (We define the ``size'' of a Brownian loop by its quadratic variation.) %
The most straightforward way to regularize these masses of loops is by a simple cutoff, i.e., restricting the quadratic variation to lie in a fixed interval.\footnote{Note that the regularizing intervals in the statements of our results are defined with the parameters multiplied by factors of $4$; this is just because the resulting formulas are slightly neater.  %
 } This truncated mass of loops can be made to converge as this fixed interval grow to all of $[0,\infty)$ by subtracting terms (that are blowing up as the interval grows) that depend only on the area and boundary length of the surface.  The following theorem asserts the resulting limit is given precisely by the zeta-regularized determinant of the Laplacian.  

\begin{theorem}\label{thm-loop-det}
For manifolds $(M,g)$ with boundary, the total mass of loops with quadratic variation greater than $4\delta$ under the Brownian loop measure is given by
\[\frac{\Vol_{g}(M)}{4 \pi \delta} - \frac{\Len_g(\partial M)}{4\sqrt{\pi \delta}} - \log {\det}_{\zeta} \Delta_g - \aconst (\log \delta + \upgamma) + O(\delta^{1/2}) \]
as $\delta \rta 0$, where $\upgamma \approx 0.5772$ is the Euler-Mascheroni constant.
For closed manifolds $(M,g)$, the total mass of loops with quadratic variation greater than $4\delta$ and less than $4C$ is
\[
\frac{\Vol_g(M)}{4 \pi \delta} - \aconst (\log \delta+\upgamma) + \log C + \upgamma - \log {\det}_{\zeta}' \Delta_g + O(\delta) + O(e^{-\alpha C})
\]
as $\delta \rta 0$ and $C \rta \infty$, where $\alpha > 0$ depends on the manifold $(M,g)$.  
\end{theorem}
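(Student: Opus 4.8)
The plan is to reduce the theorem to the classical short-time heat-trace asymptotics on a compact surface, using the dictionary --- recalled in Section~\ref{sec-brownian-loops} --- between the Brownian loop measure and the heat kernel. Write $\Theta_g(u):=\Tr(e^{-u\Delta_g})=\sum_j e^{-\lambda_j u}$ for the heat trace of the (nonnegative) Laplace--Beltrami operator, taken with Dirichlet boundary conditions in the boundary case, since the loop measure there is carried by loops staying in the interior of $M$. The first step is to check, from the rooted/unrooted description of the Brownian loop measure, that the mass of loops with quadratic variation $>4\delta$ equals $\int_\delta^\infty u^{-1}\Theta_g(u)\,du$ and that the mass of loops with quadratic variation in $(4\delta,4C)$ equals $\int_\delta^C u^{-1}\Theta_g(u)\,du$: the factor $4$ is exactly the Jacobian converting quadratic variation into the natural time parameter of $e^{-u\Delta_g}$ (which is why it makes the formulas clean), and the factor $u^{-1}$ is the standard rooted-to-unrooted weight. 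After this step everything is the evaluation of those two integrals as $\delta\to0$ (and $C\to\infty$).

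To evaluate them I would split at $u=1$. Near $u=0$ I insert the McKean--Singer expansion: in the boundary case
\[
\Theta_g(u)=\frac{\Vol_g(M)}{4\pi u}-\frac{\Len_g(\partial M)}{8\sqrt{\pi u}}+\aconst+O(u^{1/2}),
\]
with the same expansion minus the $u^{-1/2}$ term in the closed case; the $u^0$-coefficient here is $\aconst=\chi(M)/6$, produced by Gauss--Bonnet (including the geodesic-curvature boundary term when $\partial M\ne\emptyset$). Integrating the three explicit terms against $u^{-1}\,du$ over $[\delta,1]$ produces exactly the divergences $\frac{\Vol_g(M)}{4\pi\delta}$, $-\frac{\Len_g(\partial M)}{4\sqrt{\pi\delta}}$, $-\aconst\log\delta$ plus explicit finite constants, while the leftover integral $\int_\delta^1 u^{-1}(\Theta_g(u)-\ldots)\,du$ converges to a finite constant with error $O(\delta^{1/2})$ --- or $O(\delta)$ for closed surfaces, which have no half-integer heat coefficients. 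Near $u=\infty$: in the boundary (Dirichlet) case $\Theta_g$ decays like $e^{-\lambda_1 u}$, so $\int_1^\infty u^{-1}\Theta_g(u)\,du$ is a finite constant and no upper cutoff is needed; in the closed case $\Theta_g(u)=1+O(e^{-\lambda_1 u})$, so the zero eigenvalue contributes $\int_1^C u^{-1}\,du=\log C$ and the remainder is $\int_1^\infty u^{-1}(\Theta_g(u)-1)\,du$ up to an error $O(e^{-\lambda_1 C})$ --- this is where $\alpha=\lambda_1$, the spectral gap, enters.

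It remains to identify the leftover finite constant as $-\log{\det}_\zeta\Delta_g$ (resp. $-\log{\det}_\zeta'\Delta_g$) together with the Euler--Mascheroni terms. For this I compare with the Mellin representation of the spectral zeta function, $\Gamma(s)\,\zeta_{\Delta_g}(s)=\int_0^\infty u^{s-1}\bigl(\Theta_g(u)-\theta_\infty\bigr)\,du$, with $\theta_\infty=\lim_{u\to\infty}\Theta_g(u)$ (namely $0$ in the boundary case and $1$ in the closed case, where $\zeta_{\Delta_g}$ then runs over the nonzero eigenvalues and ${\det}_\zeta'$ appears). Splitting this integral at $u=1$ in exactly the same way, reading off the Laurent expansion at $s=0$, and using $1/\Gamma(s)=s+\upgamma s^2+O(s^3)$ --- which is precisely where $\upgamma$ enters --- expresses $\zeta_{\Delta_g}'(0)=-\log{\det}_\zeta\Delta_g$ in terms of the very same finite constants $\int_0^1 u^{-1}(\Theta_g-\ldots)\,du$ and $\int_1^\infty u^{-1}(\Theta_g-\theta_\infty)\,du$ and the heat coefficients that appeared above. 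Substituting back collapses everything into the stated formulas; one then checks that all the constants match, including the extra $\log C+\upgamma$ in the closed case. The main obstacle is really the very first step: pinning down the precise normalization relating the Brownian loop measure on a general Riemann surface equipped with a smooth metric (and a smooth boundary, in the boundary case) to the heat trace --- in particular getting the rooted-versus-unrooted $u^{-1}$ factor and the correct boundary conditions right --- together with invoking the classical heat-trace asymptotics with a genuinely uniform $O(u^{1/2})$ remainder; once that dictionary is in place, the rest is bookkeeping with Mellin transforms.
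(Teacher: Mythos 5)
Your proposal is correct and follows essentially the same route as the paper: the loop mass is identified with $\int_\delta^\infty t^{-1}\tr(e^{-t\Delta})\,dt$ (the paper's Proposition~\ref{prop-reg-log-det}, via the eigenfunction expansion of the heat kernel), and then the McKean--Singer short-time expansion is combined with the Mellin representation~\eqref{eq_mellin} of $\zeta(s)$ and the expansion of $1/\Gamma$ near $s=0$ to extract $\zeta'(0)$. The only difference is cosmetic: you split the integrals at $u=1$ and match constants between two computations, whereas the paper splits the Mellin integral directly at $t=\delta$ so that the truncated loop mass appears as a single term in the formula for $\zeta'(0)$.
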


Note that, in the statements of our results and their proofs, we denote the zeta-regularized determinant of the Laplacian  by ${\det}_{\zeta} \Delta$ for manifolds with boundary and ${\det}_{\zeta}' \Delta$ for closed manifolds.  The prime notation indicates removal of the zero eigenvalue; the Laplacian has a zero eigenvalue precisely when the manifold is closed. In the discrete setting, this zero eigenvalue is directly related to the divergence of the mass of large loops (see Section~\ref{sec-discrete}); we observe this connection in the continuum setting in Remark~\ref{remark-integrand-asym}.

As in the discrete case (see Section~\ref{sec-discrete}), we can interpret Theorem~\ref{thm-loop-det} as relating the zeta-regularized determinant of the Laplacian to the partition function of the Brownian loop soup, the loop soup associated to the Brownian loop measure.  See the discussion at the end of Section~\ref{sec-relate}.  We have alluded to this loop soup interpretation in Figure~\ref{RLQGchart}. 

We stress that very close variants of Theorem~\ref{thm-loop-det}, expressed in terms of heat kernels, have appeared in different forms in several previous works---see, e.g.,~\cite[Equation (4.40)]{alvarez}.  Our main contribution is to express the expansion in the language of Brownian loops.  Our key objective in proving Theorem~\ref{thm-loop-det} is to interpret the zeta-regularized determinant of the Laplacian geometrically in terms familiar to probabilists. Such a connection between the regularized determinant of the Laplacian and the regularized mass of Brownian loops was proposed (but not rigorously proven) in \cite{sle2loopmeasure} (in the discussion after Definition 2.21) by analogy with the discrete setting, but our results are the first to rigorously establish this heuristic.  We also present a complete and self-contained proof of Theorem~\ref{thm-loop-det} that should be clear and accessible to readers with a probability background.

In Theorem~\ref{thm-loop-det}, we regularized the mass of large loops on a closed manifold by a simple cutoff $C$.  We can also consider other ways of regularizing the mass of Brownian loops and relate these different regularized loop masses to the zeta-regularized determinant of the Laplacian.  In particular, we can consider a regularization scheme that is natural in the discrete setting (as we describe in Section~\ref{sec-discrete}): namely, we can penalize loops with large quadratic variation by modifying the Brownian loop measure with a Radon-Nikodym factor that decays exponentially in the quadratic variation of the loop. In this case, we obtain

\begin{prop}\label{prop-decay}
Let $(M,g)$ be a closed manifold. Let $\mu^\tloop_{M,g,\kappa}$ denote the measure obtained from the Brownian loop measure by reweighting each loop $\eta$ by the exponential of $-\kappa/4$ times its quadratic variation.
Under this modified measure, the mass of Brownian loops on a closed manifold with quadratic variation at least $4\delta$ is equal to
\[
\frac{\Vol_g(M)}{4\pi \delta} - \frac{\chi(M)}{6} (\log \delta + \upgamma) - \log \kappa -  \log {\det}_{\zeta}' \Delta_g + \epsilon(\kappa,\delta)
\]
where $\epsilon(\kappa,\delta)$ tends to zero in the limit as first $\kappa$ and then $\delta$ is sent to zero.\footnote{One can show that this limit also holds as $\kappa,\delta \rta 0$ simultaneously, provided that $\kappa$ decays sufficiently rapidly relative to $\delta$.}
\end{prop}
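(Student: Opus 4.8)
The plan is to deduce Proposition~\ref{prop-decay} directly from the closed-manifold case of Theorem~\ref{thm-loop-det}, by comparing the exponential-decay regularization against the hard cutoff. First I would record, from the heat-kernel representation of the Brownian loop measure reviewed in Section~\ref{sec-brownian-loops}, that the push-forward of the (unweighted) Brownian loop measure under the quadratic-variation map has a density $\rho$ on $(0,\infty)$, expressible through the heat trace of $(M,g)$, with $\rho(q) = \frac{\Vol_g(M)}{\pi q^2} + O(q^{-1})$ as $q\to 0$ and $\rho(q) = q^{-1} + O(e^{-\alpha q})$ as $q\to\infty$ — the $q^{-1}$ tail coming from the zero eigenvalue of the Laplacian and the exponential error from the spectral gap, with $\alpha>0$ depending on $(M,g)$. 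Both asymptotics are already available from the proof of Theorem~\ref{thm-loop-det}. In this notation the quantity to be computed in Proposition~\ref{prop-decay} is $\int_{4\delta}^{\infty} e^{-\kappa q/4}\rho(q)\,dq$, while Theorem~\ref{thm-loop-det} states that $\int_{4\delta}^{4C}\rho(q)\,dq = \frac{\Vol_g(M)}{4\pi\delta} - \frac{\chi(M)}{6}(\log\delta+\upgamma) + \log C + \upgamma - \log {\det}_{\zeta}'\Delta_g + O(\delta) + O(e^{-\alpha C})$ as $\delta\to 0$ and $C\to\infty$.

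Next I would introduce an auxiliary cutoff $C = C(\kappa)$ with $C\to\infty$, $\kappa C\to 0$ and $\kappa\log C\to 0$ as $\kappa\to 0$ (say $C = \kappa^{-1/2}$), and split the integral $\int_{4\delta}^\infty e^{-\kappa q/4}\rho(q)\,dq$ at $q = 4C$. For the bulk part I would write $e^{-\kappa q/4} = 1 - (1 - e^{-\kappa q/4})$: the term $\int_{4\delta}^{4C}\rho(q)\,dq$ is exactly the expression supplied by Theorem~\ref{thm-loop-det}, and the correction is controlled using $0 \le 1 - e^{-\kappa q/4} \le \kappa q/4$ together with the density bounds above, giving $\int_{4\delta}^{4C}(1 - e^{-\kappa q/4})\rho(q)\,dq = O(\kappa C) + O(\kappa\log(1/\delta))$. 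For the tail part I would use $\rho(q) = q^{-1} + O(e^{-\alpha q})$ to write $\int_{4C}^\infty e^{-\kappa q/4}\rho(q)\,dq = \int_{4C}^\infty e^{-\kappa q/4}\,\frac{dq}{q} + O(e^{-\alpha C})$, and then, after the substitution $w = \kappa q/4$, identify $\int_{4C}^\infty e^{-\kappa q/4}\,\frac{dq}{q} = E_1(\kappa C) = -\upgamma - \log(\kappa C) + O(\kappa C)$, where $E_1$ denotes the exponential integral. Adding the three contributions, the auxiliary data cancels: the $+\log C$ from Theorem~\ref{thm-loop-det} cancels the $-\log C$ inside $-\log(\kappa C) = -\log\kappa - \log C$, and the isolated $+\upgamma$ from Theorem~\ref{thm-loop-det} cancels the $-\upgamma$ in the expansion of $E_1(\kappa C)$. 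What survives is $\frac{\Vol_g(M)}{4\pi\delta} - \frac{\chi(M)}{6}(\log\delta+\upgamma) - \log\kappa - \log {\det}_{\zeta}'\Delta_g$, plus an error $\epsilon(\kappa,\delta) = O(\delta) + O(e^{-\alpha C}) + O(\kappa C) + O(\kappa\log(1/\delta))$, which is precisely the claimed identity.

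It then remains to check that $\epsilon(\kappa,\delta)\to 0$ in the stated iterated sense. With $C = \kappa^{-1/2}$: as $\kappa\to 0$ with $\delta$ fixed one has $e^{-\alpha C}\to 0$, $\kappa C = \kappa^{1/2}\to 0$ and $\kappa\log(1/\delta)\to 0$, so $\epsilon(\kappa,\delta)\to O(\delta)$; then letting $\delta\to 0$ gives $\epsilon\to 0$. The footnote's simultaneous-limit variant follows the same way, provided $\kappa\log(1/\delta)\to 0$ along the joint limit, i.e.\ $\kappa$ decays sufficiently fast relative to $\delta$. The step I expect to require the most care — and the only genuinely non-routine one — is the first: pinning down the normalization conventions so that the reweighted loop mass really is $\int_{4\delta}^\infty e^{-\kappa q/4}\rho(q)\,dq$ against the \emph{same} density $\rho$ (hence the same heat-trace and zeta-determinant constants) that appears in Theorem~\ref{thm-loop-det}. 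Once the two regularizations are written against this common density, everything reduces to the elementary splitting above and the small- and large-$q$ asymptotics of $\rho$; the conceptual content is simply that the $\log C$ divergence coming from the hard cutoff on large loops is exactly exchanged for the $\log\kappa$ coming from the exponential weight, through the small-argument expansion of the exponential integral.
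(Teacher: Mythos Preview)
Your argument is correct, but it takes a somewhat different route from the paper's. The paper does not invoke Theorem~\ref{thm-loop-det} as a black box; instead it returns to the intermediate identity~\eqref{eqn_noboundary} from that proof, namely
\[
\zeta'(0) = \int_{\delta}^\infty t^{-1}\bigl(\tr(e^{-t\Delta})-1\bigr)\,dt - \frac{\Vol_g(M)}{4\pi\delta} + (\log\delta+\upgamma)\Bigl(\aconst-1\Bigr) + O(\delta),
\]
and decomposes the integral on the right algebraically as
\[
\int_{\delta}^\infty t^{-1} e^{-\kappa t}\tr(e^{-t\Delta})\,dt \;-\; \int_{\delta}^\infty t^{-1} e^{-\kappa t}\,dt \;+\; \int_{\delta}^\infty t^{-1}(1-e^{-\kappa t})\bigl(\tr(e^{-t\Delta})-1\bigr)\,dt.
\]
The first term is (after $u=2t$) the $\kappa$-weighted loop mass, the second is handled by the same exponential-integral expansion you use (giving $\upgamma+\log(\delta\kappa)+O(\delta\kappa)$), and the third is shown to be $o(1)$ by splitting at $t=-\log\kappa$. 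No auxiliary hard cutoff $C$ is introduced.

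Your approach is more modular: you treat Theorem~\ref{thm-loop-det} as established and compare the two regularizations through a common density $\rho$, with the auxiliary $C=\kappa^{-1/2}$ mediating between them and the $E_1$ expansion producing exactly the $\log C \leftrightarrow \log\kappa$ exchange. This makes the conceptual point (exchanging a hard-cutoff divergence for an exponential-weight one) very transparent, at the cost of an extra layer. The paper's approach is slightly shorter and avoids the auxiliary parameter, but re-opens the proof of Theorem~\ref{thm-loop-det} rather than citing its statement. Both arguments ultimately rest on the same two ingredients: the short-time heat expansion and the small-argument asymptotics of the exponential integral.
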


\begin{remark}
\label{remark-other-loop-results}
In view of the results we describe in Section~\ref{sec-discrete} relating a discrete Laplacian to masses of discrete loops, the expansion in Theorem~\ref{thm-loop-det} resembles a result that appears in previous works for the continuum rectangular torus $\BB T$.  If the graph $\mathbb T_\eps$ is a lattice discretization of $\mathbb T$ at mesh size $\eps$, then the log determinant of the discrete Laplacian on $\mathbb T_\eps$\footnote{Here the two notions of the discrete Laplacian are essentially equivalent since the graph is regular, as we noted earlier in this section.} admits an expansion analogous to that of Theorem~\ref{thm-loop-det}, with the mesh size $\eps$ playing the role of $\delta$ \cite{chinta2010zeta, vertman2018regularized, duplantier-david-det-lap}; the constant-order term of this expansion is the logarithm of the zeta-regularized determinant of the Laplace-Beltrami operator on the continuum torus. Since we can express the log determinant of this discrete Laplacian in terms of masses of discrete loops, the expansion just described can be interpreted as an expansion of masses of discrete loops on the torus closely resembling Theorem~\ref{thm-loop-det}.

A similar expansion exists for the log determinant of the discrete Laplacian on $\ep \BB{Z}^2$ restricted to some fixed simply-connected rectilinear region~\cite{kenyon2000asymptotic}.
\end{remark}

Finally, Theorem~\ref{thm-loop-det} immediately yields an interpretation of the Loewner energy of an oriented simple smooth closed curve---introduced in \cite{wang2019energy,friz-shekhar}---in terms of the mass of Brownian loops the curve hits. The Loewner energy of a closed curve $\eta$ can be defined, e.g., as $\frac{1}{\pi} \int_{\BB C \backslash \eta} | \nabla \log |f'(z)||^2 dz^2$, where $f$ maps the two connected components of $\BB C \backslash \eta$ onto the two connected components of $\BB C \backslash \BB R$ and $f(z) - z \rta 0$ as $z \rta \infty$~\cite[Theorem 1.2]{wang2018equivalent}.

\begin{corollary}
Let $(S^2, g)$ be a sphere and $\eta$ a simple smooth closed curve on the sphere. Then the mass of loops hitting $\eta$ with quadratic variation between $4\delta$ and $4C$ is given by 
\[\frac{\Len_g(\eta)}{2\sqrt{\pi \delta}} + \log C - \log \Vol_g(S^2) -\frac{1}{12} I_L(\eta) + K +O(\delta^{1/2}) + O(e^{-\alpha C}),\]
where $I_L(\eta)$ is the Loewner energy of the curve $\eta$, $K$ is a constant independent of $g,\eta,\delta,$ and $C$, and $\alpha>0$ depends only on the metric $g$.%
\end{corollary}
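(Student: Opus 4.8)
The idea is inclusion--exclusion combined with the restriction property of the Brownian loop measure. By the Jordan curve theorem on $S^2$, the curve $\eta$ separates the sphere into two disjoint open topological disks $D_1, D_2$ with common smooth boundary $\eta$; equipped with $g$, each $(D_i, g)$ is a smooth compact manifold with boundary (and the boundary length of each is $\Len_g(\eta)$). A Brownian loop on $(S^2,g)$ is connected, so it fails to hit $\eta$ precisely when it lies in one of $D_1, D_2$; and, by the restriction property recalled in Section~\ref{sec-brownian-loops}, the Brownian loop measure on $(S^2,g)$ restricted to loops contained in $D_i$ agrees with the Brownian loop measure on $(D_i, g)$. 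Hence, for every $0 < \delta < C$, the mass of loops hitting $\eta$ with quadratic variation in $(4\delta, 4C)$ equals the mass of all loops with quadratic variation in $(4\delta,4C)$ on $(S^2,g)$ minus the corresponding masses on $(D_1,g)$ and on $(D_2,g)$.

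First I would apply Theorem~\ref{thm-loop-det}: the closed-manifold expansion to $(S^2,g)$ and the with-boundary expansion to each $(D_i,g)$. Since the with-boundary formula counts \emph{all} loops above the cutoff, while on a bounded domain $D_i$ the mass of loops with quadratic variation exceeding $4C$ is $O(e^{-\alpha C})$ (a Brownian bridge of duration $t$ confined to $D_i$ has probability $\lesssim e^{-\lambda_1(D_i)\,t}$), the $(4\delta,4C)$-mass on $D_i$ equals the with-boundary expansion up to $O(e^{-\alpha C})$. Now the divergent terms cancel term by term: the $\Vol/(4\pi\delta)$ terms cancel because $\Vol_g(S^2) = \Vol_g(D_1)+\Vol_g(D_2)$; the $\log\delta$ (and companion $\upgamma$) terms cancel because $\chi(S^2)=\chi(D_1)+\chi(D_2)$, i.e.\ $2=1+1$; the two boundary terms $-\Len_g(\eta)/(4\sqrt{\pi\delta})$ are subtracted and combine into $+\Len_g(\eta)/(2\sqrt{\pi\delta})$; and the $\log C$ from the sphere survives. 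What remains is
\[
\frac{\Len_g(\eta)}{2\sqrt{\pi\delta}} + \log C + \Big(\log{\det}_{\zeta}\Delta_{D_1,g} + \log{\det}_{\zeta}\Delta_{D_2,g} - \log{\det}'_{\zeta}\Delta_{S^2,g}\Big) + \upgamma + O(\delta^{1/2}) + O(e^{-\alpha C}),
\]
where the composite error comes from the two cases of Theorem~\ref{thm-loop-det} and the $4C$-truncation on the disks, and $\alpha$ is governed by the spectral gap of $(S^2,g)$ and the first Dirichlet eigenvalues of $D_1,D_2$.

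It remains to identify the determinant combination with $-\tfrac{1}{12}I_L(\eta) - \log\Vol_g(S^2)$ up to a constant independent of $g$ and $\eta$. I would first check, via the Polyakov--Alvarez anomaly formula, that $\log{\det}_{\zeta}\Delta_{D_1,g} + \log{\det}_{\zeta}\Delta_{D_2,g} - \log{\det}'_{\zeta}\Delta_{S^2,g} + \log\Vol_g(S^2)$ is \emph{independent of the conformal factor}: writing $g = e^{2\sigma}g_0$, the bulk Liouville-action anomalies of $D_1$ and $D_2$ add to the bulk anomaly of the sphere and cancel it; the boundary anomaly terms along $\eta$ cancel between the two sides because the outward normal derivatives and geodesic curvatures seen from $D_1$ and $D_2$ are opposite; and the area anomaly of the closed sphere is cancelled by the explicit $\log\Vol_g(S^2)$. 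So this combination is a conformal invariant of $(S^2,\eta)$. I would then evaluate it by conformal welding: uniformizing $D_1$ and $D_2$ by the maps appearing in the quoted definition of $I_L(\eta)$ and tracking the Polyakov--Alvarez anomaly of these maps produces exactly $-\tfrac{1}{12\pi}\int_{\C\setminus\eta}|\nabla\log|f'(z)||^2\,dz^2 = -\tfrac{1}{12}I_L(\eta)$, up to a universal constant; equivalently, one invokes the known $\zeta$-determinant representation of the Loewner energy. Combining this with the previous paragraph gives the stated expansion with $K$ equal to $\upgamma$ plus that universal constant.

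The main obstacle is this last step: the Polyakov--Alvarez/BFK-type surgery bookkeeping for the splitting $S^2 = D_1 \cup_\eta D_2$ must be carried out carefully enough that every metric-dependent term cancels except $-\log\Vol_g(S^2)$, and the surviving conformal invariant must be matched, with the precise coefficient $-1/12$, to the Dirichlet-integral definition of the Loewner energy. Everything else is the expansion of Theorem~\ref{thm-loop-det} and elementary cancellation.
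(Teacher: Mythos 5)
Your proposal is correct and follows essentially the same route as the paper: decompose the hitting mass as (sphere mass) minus (mass in $D_1$) minus (mass in $D_2$), use exponential mixing to absorb the large-loop truncation into $O(e^{-\alpha C})$, apply both cases of Theorem~\ref{thm-loop-det} to cancel the divergent terms, and identify $\log{\det}_{\zeta}'\Delta_{S^2,g}-\log{\det}_{\zeta}\Delta_{D_1,g}-\log{\det}_{\zeta}\Delta_{D_2,g}$ with $\log\Vol_g(S^2)+\tfrac{1}{12}I_L(\eta)$ up to a universal constant. The only difference is at that last step, where you sketch a Polyakov--Alvarez surgery derivation of this identity while the paper simply cites the known $\zeta$-determinant characterization of the Loewner energy from Wang's work --- an option you also note.
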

\begin{proof}
The mass of loops hitting $\eta$ of quadratic variation between $4\delta$ and $4C$  is equal to the total mass of loops in this range on the sphere, minus the mass of loops in this range in each of the two connected components $D_1, D_2$ of $S^2 \backslash \eta$.  The exponential mixing of Brownian motion on the sphere implies that the total mass of loops of quadratic variation greater than $4C$ avoiding $\eta$ is $O(e^{-\alpha C})$.  Combining this observation with Theorem~\ref{thm-loop-det}, we deduce that the mass of loops hitting $\eta$ of quadratic variation between $4\delta$ and $4C$ is given by
\[\frac{\Len_g(\eta)}{2\sqrt{\pi \delta}} + \log C  - \log {\det}_{\zeta}' \Delta_{S^2,g} + \log {\det}_{\zeta} \Delta_{D_1,g} + \log {\det}_{\zeta} \Delta_{D_2,g}
+ \upgamma +O(\delta^{1/2}) + O(e^{-\alpha C}).\]
It follows from~\cite[Equation (14), Proposition 7.1, Theorem 7.3]{wang2018equivalent} that the expression $\log {\det}_{\zeta}' \Delta_{S^2,g} - \log {\det}_{\zeta} \Delta_{D_1,g} - \log {\det}_{\zeta} \Delta_{D_2,g}$ is equal to $\log \Vol_g(S^2) +\frac{1}{12} I^L(\eta)$ plus a constant.  The result follows.
\end{proof}
A similar interpretation of Loewner energy in terms of Werner's measure on self-avoiding loops \cite{werner-self-avoiding-loops} is given in \cite{wang2018note}.

\subsection*{Outline}

Section~\ref{sec-discrete} provides some context for our results from the discrete setting.  We define discrete analogues of the Laplace-Beltrami operator on graphs, and we describe connections to loop measures on graphs in the literature.  As we describe above,   Theorem~\ref{thm-loop-det} and Proposition~\ref{prop-decay} translate these results for graphs directly to the continuum setting.

Section~\ref{sec-brownian-loops} reviews the construction of the Brownian loop measure in the continuum setting; Section~\ref{sec-reg-det} defines the zeta-regularized determinant of the Laplacian.  In Section~\ref{sec-relate} we relate these two objects, proving Theorem~\ref{thm-loop-det} and Proposition~\ref{prop-decay}.

In Section~\ref{sec-square-subdivision} we prove our rigorous version of the heuristic definition of LQG (``Definition''~\ref{def-original}).  We stated this result loosely above as Theorem~\ref{thm-weighting}; we state this theorem more precisely in Section~\ref{sec-square-subdivision} (Theorem~\ref{thm-laplacian-reweighting}) after defining and motivating our quantum-regularized LQG surface model.  We end the section by interpreting our result, first in the classical regime of matter central charge $\cc < 1$, and then in the extended range $\cc \in (1,25)$.

Finally, in Section~\ref{sec-open}, we discuss open problems that arise from our work.

\noindent\textbf{Acknowledgements: } J.P.\ was partially supported
by the National Science Foundation Graduate Research Fellowship under Grant No. 1745302. M.A, M.P., and S.S.\ were partially supported by NSF Award: DMS 1712862. We thank Ewain Gwynne, Jason Miller, Peter Sarnak, Xin Sun, Yilin Wang, and Wendelin Werner for helpful conversations.

\section{The discrete setting} \label{sec-discrete}

It is possible to define several slightly different notions of a discrete Laplacian associated to a graph.  We will consider two such definitions: the \emph{graph Laplacian} and the \emph{random walk Laplacian}.

\begin{defn}[Two different notions of ``discrete Laplacian'']  
The \emph{graph Laplacian} on a graph ${\mcl G}$ with vertices $\{v_j\}_{j = 1}^{J}$ is defined as the $J \times J$ matrix $\Delta_{\op{graph}}$ with diagonal entries
\[
\Delta^{jj}_{\op{graph}}= \text{degree of the vertex $v_j$}
\]
and other entries
\[
\Delta^{jk}_{\op{graph}}=  \begin{cases}
   -1, & \text{if $v_j$ and $v_k$ are connected by an edge }\\
  0, & \text{otherwise }
  \end{cases}.
\]
The \emph{random walk Laplacian} $\Delta_{\op{RW}}$ is given by the matrix $I-P$, where $P$ is the transition probability matrix associated to simple random walk on the graph.  
\end{defn}

Note that the two notions of discrete Laplacian that we have defined are the same on a regular graph (a graph whose vertices all have the same degree) up to a constant factor that depends only on the number of vertices of the graph and their common degree.  More generally, the determinants of  $\Delta_{\op{graph}}$ and  $\Delta_{\op{RW}}$ on a graph ${\mcl G}$ with vertices $\{v_j\}_{j = 1}^{J}$ are related by
\[
\det \Delta_{\op{graph}} =  \prod_{j=1}^J d_j \det \Delta_{\op{RW}},
\]
where $d_j$ is the degree of the vertex $v_j$.  

It is expected that the partition functions of many statistical mechanics models behave asymptotically like powers of the determinant of the discrete Laplacian~\cite{bpz-conformal-symmetry}.  This highlights an important reason why LQG is so interesting to study: if LQG represents the limit of random planar maps sampled with probability proportional to powers of $\det \Delta_{\op{graph}}$ or $\det \Delta_{\op{RW}}$, then LQG should also describe the limiting behavior of random planar maps decorated by any of these statistical mechanics models.

In some cases, the connection between the statistical mechanics model and the determinant of the discrete Laplacian is not just asymptotic, but an exact combinatorial relation on the discrete level.  The heuristic correspondence between $(\det \Delta)^{-\cc/2}$ and the $\cc$-dimensional Gaussian free field in the continuum setting is based on an exact identity in the discrete setting: the partition function of the discrete Gaussian free field %
 is precisely given by $(\det \Delta_{\op{graph}})^{-1/2}$ (see, e.g.,~\cite{berestycki-lqg-notes}).  The determinant $\det \Delta_{\op{graph}}$ also encodes the number of spanning trees on the graph, by Kirchoff's matrix-tree theorem.

We devote the remainder of this section to describing another such combinatorial relation in the discrete setting: a relation between the determinant $\det \Delta_{\op{RW}}$ and masses of loops under different discrete loop measures on the graph.  The following three sections of this paper will culminate in proofs of Theorem~\ref{thm-loop-det} and Proposition~\ref{prop-decay}, which describe an analogous connection in the continuum setting between the zeta-regularized determinant of the Laplacian to the mass of loops under the Brownian loop measure.

We begin by describing two different definitions of discrete loop measures on a graph.
We define the \textit{random walk loop measure} on a graph $\mcl G$ in two stages (see, e.g.,~\cite[Chapter 9]{lawler-limic-walks}):
\begin{enumerate}
\item
We first define a measure on \emph{rooted} loops in ${\mcl G}$.  Consider the loop rooted at a vertex $x_1$ in ${\mcl G}$ that traces, in order, the vertices $x_2,\ldots,x_n$ before returning to $x_1$.  (Note that the vertices $x_j$ are not necessarily distinct, i.e., the loop does not have to be simple.)  The mass that we assign to this loop is given by the probability that a simple random walk on ${\mcl G}$ started at $x_1$ and run for $n$ steps traces exactly this loop.
\item
We then define the random walk loop measure as a measure on \emph{unrooted} loops, which are equivalence classes of rooted loops that are the same up to a shift in the choice of root (i.e., they are the same if we ``forget'' the root).  The mass that we  assign to an unrooted loop is the sum of the masses of all  distinct rooted loops in that equivalence class under the rooted loop measure, divided by the length of the loop.
\end{enumerate}
We can similarly define a continuous-time discrete loop measure on $\mcl G$~\cite{lejan-notes} in terms of a continuous-time version of simple random walk on $\mcl G$ with steps at exponential times.  

Note that the mass of loops under either of these loop measures is infinite because of the contribution of arbitrarily large loops. We could remove this divergent quantity by considering a graph $\mcl G$ with boundary and define the random walk loop measure as above in terms of random walk killed when it hits the boundary of $\mcl G$.  

In the boundary case, the quantity $- \frac{1}{2} \log \det \Delta_{\op{RW}} = - \frac{1}{2} \log \det (I-P)$ is exactly equal to the mass of loops under the random walk loop measure~\cite[Proposition 9.3.3]{lawler-limic-walks}, and it also equals the mass of loops under the continuous-time discrete loop measure that visit more than one vertex~\cite[Equation (2.5)]{lejan-notes}.  For both loop measures, this equivalence is proven by expressing the mass of loops as the trace
\[
\tr( P + P^2/2  + P^3/3 + \cdots) = -\tr( \log(I - P)),
\]
which is then shown to equal $-\log \det(I - P)$. 

We now explain that the equivalence between $\det \Delta_{\op{RW}}$ and the masses of the two discrete loop measures just described can also be phrased as identifying $(\det \Delta_{\op{RW}})^{-\cc/2}$ as the partition function of the \emph{intensity-$\cc$ loop soup} associated to these two loop measures. The intensity-$\cc$ loop soup associated to a measure on loops is an intensity-$\cc$ Poisson point process on the space of loops; i.e., a Poisson point process on the space of loops corresponding to $\cc$ times the original measure.  In general, if a measure space has total measure $\lambda$, then an intensity-$\cc$ Poisson point process on that space is a probability measure that assigns probability $e^{-\cc \lambda}$ to the empty configuration (with zero points).  Thus, if we multiply the law of the Poisson point process by the constant $e^{\cc \lambda}$, we obtain a new measure which has total mass $e^{\cc\lambda}$ and which assigns mass $1$ to the empty configuration.  The quantity $e^{\cc \lambda}$ can thus be interpreted as the {\em partition function} associated to the Poisson point process when it is normalized so that the empty configuration has weight $1$. In the setting described here, since the random loop measure has total mass $\lambda = -\frac{1}{2} \log \det \Delta_{\op{RW}}$, we have $e^{\cc \lambda} = (\det \Delta_{\op{RW}})^{-\cc/2}$. We can therefore say that an intensity-$\cc$ random walk loop soup has a partition function given by $(\det \Delta_{\op{RW}})^{-\cc/2}$.

Now, in the closed (i.e. boundaryless) case, the quantity $- \frac{1}{2} \log \det \Delta_{\op{RW}} = - \frac{1}{2} \log \det (I-P)$ is infinite because $\Delta_{\op{RW}}$ has a zero eigenvalue (corresponding to the all $1$'s vector); and, indeed, we have noted that the mass of loops under either of these loop measure is infinite in this case because of the contribution of arbitrarily large loops.  But we can still express a regularized version of the mass of loops in terms of a determinant.  For instance, we can modify the discrete loops measures  with a Radon-Nikodym factor that decays exponentially in the length of the loop---more simply, this means penalizing large loops by killing a loop with probability $1-\alpha$ for every edge it traverses, for some $\alpha \in (0,1)$.  Then the mass of loops can be computed as in the boundary case, with $P$ replaced by $\alpha P$, and the resulting regularized mass of loops is equal to $- \frac{1}{2} \log \det( I - \alpha P)$. If $\lambda_0=0, \lambda_1, \ldots, \lambda_{n-1}$ are the eigenvalues of $\Delta_{\op{RW}} = I-P$, so that $\wt \lambda_j = 1-\lambda_j$ are the eigenvalues of $P$, then $\log \det( I - \alpha P)$ is equal to $$\log \prod_{j=0}^{n-1} (1-\alpha \wt\lambda_j) = \log \prod_{j=1}^{n-1}(1-\alpha \wt\lambda_j) + \log(1-\alpha) = \log {\det}'(I-P) + \log(1-\alpha) + o(1-\alpha),$$ where $\det'(I-P)$ is the product of the non-zero eigenvalues of $I-P = \Delta_{\op{RW}}$ and $(1-\alpha)$ is the killing factor as described above. As we noted in Section~\ref{sec-intro}, we employ the same regularization scheme in the continuum setting in Proposition~\ref{prop-decay}.

\section{Defining the Brownian loop measure} \label{sec-brownian-loops}

The Brownian loop measure was defined in~\cite{lawler-werner-soup} for subsets of the complex plane equipped with the Euclidean metric, and generalized in~\cite{werner-self-avoiding-loops} to compact two-dimensional Riemann surfaces $M$ with or without boundary equipped with a metric $g$ compatible with its complex structure.  We describe the Brownian loop measure in this more general setting.  We begin by defining some measures associated to the manifold $(M,g)$; to keep the notation simple, we will denote these measures with the dependence on $(M,g)$ implicit whenever there is no possibility of confusion.

Let $\Delta$ denote the Laplace-Beltrami operator on $(M,g)$.  We can define Brownian motion on $(M,g)$ as a Markov process with transition kernel  given by the kernel $p(z,w;t)$ associated to the operator $\frac{1}{2} \Delta$; it is a probability measure on the space of continuous paths from $[0,\infty)$ to $(M,g)$, equipped with the metric of local uniform convergence.\footnote{For the purposes of convergence of measures on paths, we want to consider paths with finite domains $[0,t]$ as paths on all of $[0,\infty)$ by implicitly assuming that the interval $[t,\infty)$ is all mapped to the value of the path at $t$.} If $(M,g)$ has a boundary, then Brownian motion is killed upon hitting $\partial M$, so $\int_M p(z,w;t) \Vol_g(dw)$ may be strictly less than $1$.  %

We define the measure $\mu(z,\cdot;t)$ of Brownian motion on $(M,g)$ started at $z$ and run until time $t$ as the law on continuous paths in $(M,g)$ of $B|_{[0,t]}$, for $B$ a Brownian motion on $(M,g)$ started at $z$, restricted to the event that it lasts for at least time $t$.  This measure has total mass $\int_{y \in M} p(z,y;t)$.   In analogy with Brownian motion in $\BB{R}$,  almost every path $\eta$ in the support of $\mu(z,\cdot;t)$ has quadratic variation equal to $2t$; see, e.g.,~\cite[Corollary 5.2]{emery-bm-manifolds}.  We denote  by $\nu(\eta)$ the length of the time interval that parametrizes $\eta$.

Observe that we can decompose $\mu(z,\cdot;t)$ as
\[
\mu(z,\cdot;t) = \int_{w \in M} \mu(z,w;t) \Vol(dw)
\]
with $\mu(z,w;t)$ defined by choosing a neighborhood $N$ of $z$, restricting $\mu_{M,g}(z,\cdot;t)$ to paths $B: [0,t] \rta M$ satisfying $B_t \in N$, normalizing this restricted measure by $\Vol(N)$, and taking a weak limit of these measures as $N$ shrinks to the point $z$.  %
We then define the \emph{Brownian bridge measure} $\mu(z,w)$ as
\eqb
\mu(z,w) := \int_0^\infty \mu(z,w;t) dt
\label{eqn-mu-def}
\eqe

\begin{lem}
For fixed $z,w \in M$ (not necessarily distinct) and conformally equivalent metrics $g, \wh g$, the measures $\mu_{M,g}(z,w)$ and $\mu_{M,\wh g}(z,w)$ agree modulo time-parametrization of the curves.
\end{lem}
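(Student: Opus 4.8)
The plan is to reduce the statement to the conformal covariance of the Brownian \emph{transition kernel}, since $\mu_{M,g}(z,w)$ is built from the kernels $p_g(z,w;t)$ by the integral recipe in~\eqref{eqn-mu-def}. The key classical fact is that if $\wh g = e^{2\sigma} g$ for a smooth function $\sigma$ on $M$, then Brownian motion on $(M,\wh g)$ is obtained from Brownian motion on $(M,g)$ by a time change: in two dimensions the Laplace-Beltrami operators satisfy $\Delta_{\wh g} = e^{-2\sigma}\Delta_g$, so the generators $\tfrac12\Delta_{\wh g}$ and $\tfrac12\Delta_g$ differ only by the positive scalar field $e^{-2\sigma}$, and a generator of the form $e^{-2\sigma}L$ generates the same process as $L$ run with the random clock $t\mapsto \int_0^t e^{-2\sigma(B_s)}ds$. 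Consequently the two Brownian motions trace the same curves with the same law, only parametrized differently; in particular a path has the same range and the same sequence of visited points under either metric, which is exactly the content of ``agree modulo time-parametrization.''

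First I would make precise the object we are comparing. A point of the support of $\mu_{M,g}(z,w)$ is a continuous path $\eta\colon[0,\nu(\eta)]\to M$ with $\eta(0)=z$, $\eta(\nu(\eta))=w$; the equivalence relation ``modulo time-parametrization'' identifies $\eta$ with $\eta\circ\phi$ for any increasing homeomorphism $\phi$ of intervals. So the claim is an equality of pushforwards of $\mu_{M,g}(z,w)$ and $\mu_{M,\wh g}(z,w)$ under the projection to this quotient space. Second, I would recall/establish the conformal time-change for Brownian motion on surfaces: using Dynkin's formula or the martingale problem, a process with generator $\tfrac12 e^{-2\sigma}\Delta_g$ started at $z$ has the law of $B_{\tau(\cdot)}$ where $B$ is $g$-Brownian motion from $z$ and $\tau$ is the inverse of $t\mapsto \int_0^t e^{-2\sigma(B_s)}\,ds$ (this additive functional is a.s.\ a strictly increasing bijection of $[0,\zeta)$, $\zeta$ the lifetime, since $\sigma$ is bounded on the compact $M$). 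Killing on $\partial M$ is preserved because the time change does not alter which boundary point, if any, is hit.

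Third, I would transfer this from the path measures $\mu_g(z,\cdot;t)$ to the bridge measures. The cleanest route is to note that the Brownian bridge measure admits the representation $\mu_g(z,w) = \int_0^\infty \mu_g(z,w;t)\,dt$ where $\mu_g(z,w;t)$ is $p_g(z,w;t)$ times the law of the $g$-Brownian bridge from $z$ to $w$ of $g$-time-length $t$; applying the time change pathwise to a $g$-bridge of length $t$ produces a $\wh g$-bridge whose $\wh g$-time-length is the (random) value $A_t:=\int_0^t e^{-2\sigma}$ of the additive functional, but as a \emph{curve} it is unchanged. One then checks that integrating $\mu_g(z,w;t)\,dt$ over $t\in[0,\infty)$ against this change of clock reproduces $\int_0^\infty \mu_{\wh g}(z,w;s)\,ds$: this is a Fubini/change-of-variables computation using the identity $p_{\wh g}(z,w;s)\,(\text{bridge law}) $ integrated in $s$ equals the same unrooted-curve measure obtained from the $g$-side, which ultimately rests on the semigroup identity $p_g(z,w;t)=\int_M p_g(z,y;t')p_g(y,w;t-t')\Vol_g(dy)$ being intertwined by the time change. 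Alternatively — and this may be the slicker writeup — one avoids disintegrating in $t$ altogether: the measure $\mu_g(z,w)$ is characterized by its pairings against cylinder functions of the curve through the kernels $p_g$, and $\int_0^\infty p_g(z,w;t)\,dt$-type quantities transform correctly because they are coordinate-free (they are the Green's function, which is a conformal invariant in two dimensions up to the diagonal).

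The main obstacle I anticipate is not the heuristic — that is standard — but bookkeeping the equality as an \emph{honest equality of measures on the quotient path space}, rather than merely ``the same curves appear.'' Concretely: (i) verifying that the additive functional time change is a.s.\ a genuine reparametrization (no intervals of constancy, no explosion), which is where compactness of $M$ and smoothness of $\sigma$ enter; (ii) checking measurability and that the pushforward commutes with the $\int_0^\infty \,dt$ in~\eqref{eqn-mu-def}, i.e.\ the Fubini step, since the $\wh g$-length of the image of a $g$-bridge of length $t$ is itself random; and (iii) handling the root point $z=w$ and the shrinking-neighborhood limit in the definition of $\mu(z,w;t)$ so that the time change passes to the weak limit. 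None of these is deep, but they are the places where ``modulo time-parametrization'' does real work, and I would be careful to state the time-change lemma for Brownian motion on a compact surface with a citation (e.g.\ the conformal invariance of Brownian motion on Riemann surfaces) rather than reprove it.
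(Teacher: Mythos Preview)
Your approach is correct in spirit---the underlying fact is indeed the conformal time-change of planar Brownian motion---but the paper's proof organizes the argument quite differently, and the difference is instructive.

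The paper does not attempt to push the time-change through the bridge disintegration $\int_0^\infty \mu(z,w;t)\,dt$ directly. Instead, in the base case ($M$ with boundary, $z\neq w$) it simply splits $\mu(z,w)$ into a \emph{finite} total mass times a \emph{probability} measure: the normalized measure $\mu(z,w)/|\mu(z,w)|$ agrees modulo time-parametrization by the standard conformal invariance of Brownian motion (your ``key classical fact''), and the total mass $|\mu(z,w)|$ is the Green function $G_g(z,w)$, which is conformally invariant in two dimensions. This sidesteps entirely the Fubini/change-of-variables step you flag in point~(ii): you never need to track the random $\wh g$-length of a time-changed $g$-bridge, because the $t$-integral has already been collapsed into the Green function.

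The more substantive point the paper addresses, which your proposal does not fully confront, is that when $M$ is closed or when $z=w$ the measure $\mu(z,w)$ is \emph{infinite} (mass of long curves, respectively short loops, diverges). Your direct time-change argument would need to be carried out on an infinite measure, and the Fubini step becomes genuinely delicate there---not impossible, but not automatic. The paper handles this by truncation: restrict to curves avoiding a small patch $D$ (closed case) or to curves leaving a neighborhood $U$ of $z$ (the $z=w$ case), apply the finite-mass argument, then shrink $D$ or $U$. This reduction is what your point~(iii) is gesturing at, but the paper makes the truncation explicit rather than trying to pass the time-change through the shrinking-neighborhood limit in the definition of $\mu(z,w;t)$.

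In short: your route could be made to work, but the paper's decomposition (normalized measure $\times$ Green function, plus truncation for the infinite cases) is cleaner and avoids the bookkeeping you correctly identify as the main obstacle.
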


\begin{proof}
Consider first the simplest version of this theorem, when $M$ has boundary and $z \neq w$. The proof of the lemma in this case is almost identical to that in the case of a simply connected domain embedded in $\C$ \cite[Section 3.2.1]{lawler-werner-soup}, and follows from two equalities: Firstly, the normalized measures $\mu_{M,g}(z,w)/|\mu_{M,g}(z,w)|$ and $\mu_{M,\wh g}(z,w)/|\mu_{M,\wh g}(z,w)|$ agree modulo time-parametrization of the curves, since Brownian motion modulo time-parametrization does not depend on the conformal factor of the metric. Secondly, $|\mu_{M,g}(z,w)| = |\mu_{M,\wh g}(z,w)|$; that is, the Green functions $G_{g}(z,w), G_{\wh g} (z,w)$ agree. Roughly speaking, this is true because of Brownian scaling. In the definition of the Green function, modifying $g$ by a conformal factor affects time-parametrization in the opposite way as it does space scaling, and these factors exactly cancel out. Combining these two equalities yields the simplest case of this theorem.

Now consider the general case, where one of the following holds: 1) $M$ has no boundary, or 2) $z=w$. The above argument is not directly applicable since the measures $\mu_{M,g}(z,w)$ and $\mu_{M,\wh g}(z,w)$ are infinite. Indeed, for 1) the mass of long curves is infinite, and for 2) the mass of small loops is infinite. We will thus need to perform some kind of truncation. For 1), let $D \subset M$ be some small region not containing $z,w$, then by the above argument, the measures $\mu_{M,g}(z,w)$ and $\mu_{M,\wh g}(z,w)$ restricted to curves not hitting $D$ agree. Shrinking $D$ down to a point, we are done. For 2), let $U$ be a neighborhood of $z$, then the above argument shows that the measures $\mu_{M,g}(z,w)$ and $\mu_{M,\wh g}(z,w)$ restricted to curves not contained in $U$ agree. Shrinking $U$ down to $\{z\}$ yields the result. Finally, if both 1) and 2) are applicable, we perform both truncations above.
\end{proof}

A priori, one might construct a measure on Brownian loops on $(M,g)$ by integrating the measure $\mu(z,z)$ over $M$ with respect to the area measure $\Vol$ and then ``forgetting'' the roots of the loops, i.e., considering the induced measure on unrooted loops.   The problem with this naive approach is that, unlike $\mu(z,z)$ for fixed $z$, the integrated measure  $\int_{M} \mu(z,z) \Vol(dz)$ is \emph{not} conformally invariant.  Roughly speaking, this integrated measure weights a given loop $\eta$ by the ``size'' of the set of possible roots $z \in M$ for that loop; and that ``size'' is given by $\nu(\eta)$, which depends on the choice of metric.  Thus, to obtain a conformally invariant Brownian loop measure, one must normalize each loop $\eta$ in the support of the Brownian bridge measure by $\nu(\eta)$ before integrating over the manifold.  More precisely, we define 

\begin{definition}
The \emph{rooted Brownian loop measure} on $(M,g)$, denoted $\mu^{\text{rooted}}$, is a measure on rooted loops, i.e., paths $\eta: [0,L_\eta] \rta M$ with $\eta(0) = \eta(L_\eta)$, given by
\[
\mu^{\text{rooted}} := \int_M \frac{1}{\nu(\eta)} \mu(z,z) \Vol(dz).
\]
Here, $\frac{1}{\nu(\eta)} \mu(z,z)$ is the measure obtained by weighting each loop $\eta \sim \mu(z,z)$ by $\frac{1}{\nu(\eta)}$.
An unrooted loop is an equivalence class of rooted loops under the equivalence relation identifying $\eta$ with \[
\theta_r \eta(s) :=
\begin{dcases}
\eta(s+r)
,\quad &\text{if} \: s\leq L_\eta-r   \\
\eta(s+r-L_\eta),\quad  &\text{if} \: s>L_\eta-r
\end{dcases}
\]
for $r \in [0,L_\eta]$. The \emph{Brownian loop measure} $\mu^{\text{loop}}$ on $(M,g)$ is the measure on unrooted loops induced by $\mu^{\text{rooted}}$.
\end{definition}

The following lemma asserts that $\mu^{\text{loop}}$ is a conformally invariant measure on unrooted loops:

\begin{lem}
The Brownian loop measure is conformally invariant.
\end{lem}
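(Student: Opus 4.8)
The plan is to reduce conformal invariance of $\mu^{\text{loop}}$ to the conformal invariance established in the previous lemma for the Brownian bridge measures $\mu(z,w)$, modulo time-parametrization. The key point is that $\mu^{\text{loop}}$, viewed as a measure on unrooted loops, should admit a description that does not reference the metric-dependent quantity $\nu(\eta)$ directly. Concretely, I would use the standard fact (going back to \cite{lawler-werner-soup}) that the unrooted loop measure can be recovered from the rooted bridge measures as follows: for a rooted loop $\eta:[0,L_\eta]\to M$, the "forgetting the root" operation together with the $1/\nu(\eta)$ weight produces a measure on unrooted loops in which each unrooted loop is counted "once" in a parametrization-robust way. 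The first step, then, is to make this precise: show that $\mu^{\text{rooted}}=\int_M \nu(\eta)^{-1}\mu(z,z)\,\Vol(dz)$, after pushing forward to unrooted loops, equals the measure obtained by the alternative recipe of, say, choosing a "uniform" representative root — and crucially that this alternative recipe involves only the unparametrized bridge $\mu(z,z)$ data modulo time change.

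Second, I would invoke the preceding lemma: for each fixed $z$, $\mu_{M,g}(z,z)$ and $\mu_{M,\wh g}(z,z)$ agree modulo time-parametrization. The subtlety is that these root-fixed bridge measures are infinite (the mass of small loops at $z$ diverges), so the lemma only gives agreement after the truncation described there — e.g. restricted to loops not contained in a neighborhood $U$ of $z$. Thus I would first establish conformal invariance of $\mu^{\text{loop}}$ restricted to the set of unrooted loops with some fixed lower bound on "diameter" (or on time-length, but in an intrinsic sense), by integrating the truncated bridge-measure identity over $z\in M$ with the $1/\nu$ weight, and checking that the $1/\nu(\eta)$ weight together with the $\Vol(dz)$ integration is exactly the combination that is conformally invariant — this is the content of the discussion preceding the Definition, which explains that $\int_M \mu(z,z)\,\Vol(dz)$ is not conformally invariant precisely because it weights $\eta$ by $\nu(\eta)$, and dividing by $\nu(\eta)$ cancels the metric dependence. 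Then I would remove the truncation by taking the lower bound to zero, using that $\mu^{\text{loop}}$ is $\sigma$-finite so that monotone convergence applies on this exhaustion.

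The main obstacle I anticipate is the bookkeeping in the first step: carefully justifying that the pushforward to unrooted loops of $\int_M \nu(\eta)^{-1}\mu(z,z)\,\Vol(dz)$ can be re-expressed in a manifestly parametrization-independent way, so that "agreement modulo time-parametrization" of the inputs transfers to genuine equality of the output unrooted measures. One clean way to handle this: fix a conformal diffeomorphism realizing the change of metric as the identity map on $M$ (since $g$ and $\wh g$ are conformally equivalent metrics on the same surface, the identity is a conformal map $(M,g)\to(M,\wh g)$), so that a $g$-Brownian loop and a $\wh g$-Brownian loop with the same image trace literally the same subset of $M$ and differ only by a time change. Under this identification, the map on rooted loops that forgets parametrization intertwines the two rooted measures up to the $\nu$-weight, and the $1/\nu$ factor is precisely what makes the induced unrooted measures coincide. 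I would also need the (routine, citable) fact that $\nu(\eta)$ — the length of the time interval — transforms under the conformal change in exactly the reciprocal way to how the $\Vol(dz)$ over possible roots transforms, which is the Brownian-scaling cancellation already invoked in the proof of the preceding lemma. Once those two ingredients are in hand, the conclusion is immediate.

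\begin{proof}
Let $g,\wh g$ be conformally equivalent metrics on $M$; we must show the induced unrooted measures $\mu^{\text{loop}}_{M,g}$ and $\mu^{\text{loop}}_{M,\wh g}$ coincide. By the previous lemma, for each fixed $z\in M$ the rooted bridge measures $\mu_{M,g}(z,z)$ and $\mu_{M,\wh g}(z,z)$ agree modulo time-parametrization, after the truncation to loops not contained in a fixed neighborhood of $z$. Fix a small radius and let $\mathcal L_r$ denote the set of unrooted loops of Euclidean-or-intrinsic diameter at least $r$ (with respect to some fixed background metric); then on $\mathcal L_r$, integrating the truncated identity over $z\in M$ against $\Vol(dz)$ and weighting each rooted loop $\eta$ by $1/\nu(\eta)$ produces, on each side, the restriction $\mu^{\text{rooted}}|_{\mathcal L_r}$. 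The $1/\nu(\eta)$ weight is exactly what compensates the metric dependence: as explained before the Definition, the integral $\int_M\mu(z,z)\,\Vol(dz)$ weights a loop $\eta$ by $\nu(\eta)$, which depends on the metric, and this is the only source of metric dependence in the unrooted law; Brownian scaling shows that the conformal factor affects $\nu(\eta)$ and the mass of admissible roots in reciprocal ways, so dividing by $\nu(\eta)$ yields a conformally invariant quantity. Passing to unrooted loops, we obtain $\mu^{\text{loop}}_{M,g}|_{\mathcal L_r}=\mu^{\text{loop}}_{M,\wh g}|_{\mathcal L_r}$ for every $r>0$. Since $\mathcal L_r\uparrow\{\text{all unrooted loops}\}$ as $r\downarrow 0$ and $\mu^{\text{loop}}$ is $\sigma$-finite, monotone convergence gives $\mu^{\text{loop}}_{M,g}=\mu^{\text{loop}}_{M,\wh g}$.
\end{proof}
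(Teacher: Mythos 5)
Your overall strategy is the same as the paper's (reduce to the conformal invariance of the bridge measures $\mu(z,z)$ modulo time-parametrization, then argue that the $\nu(\eta)^{-1}\,\Vol(dz)$ combination is the right normalization), and your truncation-and-monotone-convergence framing is fine. But the proof has a genuine gap exactly at the step you flagged as the ``main obstacle'' and then did not resolve. The assertion that the conformal factor affects $\nu(\eta)$ and the root mass ``in reciprocal ways, so dividing by $\nu(\eta)$ yields a conformally invariant quantity'' is not a pointwise cancellation and is not justified as written. Writing $\wh g=e^{2\sigma}g$, the volume element transforms \emph{locally at the root} by $\Vol_{\wh g}(dz)=e^{2\sigma(z)}\Vol_g(dz)$, while the time-length transforms \emph{nonlocally} by $\nu_{\wh g}(\eta)=\int_0^{\nu_g(\eta)}e^{2\sigma(\eta(t))}\,dt$; the ratio $e^{2\sigma(z)}\nu_g(\eta)/\int_0^{\nu_g(\eta)}e^{2\sigma(\eta(t))}\,dt$ is not $1$. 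Consequently the two \emph{rooted} measures $\int_M\nu_g(\eta)^{-1}\mu_g(z,z)\Vol_g(dz)$ and $\int_M\nu_{\wh g}(\eta)^{-1}\mu_{\wh g}(z,z)\Vol_{\wh g}(dz)$ are genuinely different, and your line ``produces, on each side, the restriction $\mu^{\text{rooted}}|_{\mathcal L_r}$'' is false at the rooted level. The cancellation only occurs after disintegrating over the position of the root along the loop, and that disintegration is precisely the missing ingredient. You also never pin down the explicit form of the time change $\nu_{\wh g}(\eta)=\int_0^{\nu_g(\eta)}e^{2\sigma(\eta(t))}\,dt$, without which ``agreement modulo time-parametrization'' of the bridges cannot be converted into a statement about the $\nu^{-1}$ weights.

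The paper closes this gap with the Lawler--Werner root-reweighting lemma: if $T$ is a bounded measurable function on rooted loops with $\int_0^{\nu(\eta)}T(\theta_r\eta)\,dr=1$ for every rooted loop, then weighting $\int_M\mu(z,z)\Vol_g(dz)$ by $T$ induces the \emph{same} unrooted measure $\mu^{\text{loop}}$ regardless of the choice of $T$. Taking $T(\eta)=e^{2\sigma(\eta(0))}\big/\int_0^{\nu(\eta)}e^{2\sigma(\eta(t))}\,dt$ absorbs simultaneously the local factor $e^{2\sigma(z)}$ from $\Vol_{\wh g}(dz)$ and the nonlocal time change in $\nu_{\wh g}$, which is exactly the bookkeeping your argument needs. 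If you add this lemma (or prove the equivalent disintegration statement that, under $\int_M\mu(z,z)\Vol(dz)$, the conditional law of the root given the unrooted loop is uniform on $[0,\nu(\eta)]$ in the time parametrization), the rest of your argument goes through.
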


\begin{proof}
Let $g,\wh g$ be conformally equivalent metrics on a manifold $M$.
As noted in~\cite[Section 4.1]{lawler-werner-soup} in the Euclidean context, if $T$ is a bounded measurable function on rooted loops satisfying $\int_0^{\nu(\eta)} T(\theta_r \eta) dr = 1$ for all rooted loops $\eta$, then $ \int_M T(\eta)^{-1} \mu(z,z) \Vol_g(dz)$ induces the measure $\mu^{\text{loop}}$ on unrooted loops.  Writing $\wh g = e^{2 \sigma} g$, the result follows from setting $T(\eta) = \frac{e^{2 \sigma(\eta(0))}}{\int_0^{\nu(\eta)} e^{2 \sigma(\eta(t))} dt}$.
\end{proof}

\section{Defining the zeta-regularized determinant of the Laplacian}\label{sec-reg-det}

It is well known that the Laplace-Beltrami operator $\Delta$ on a two-dimensional compact Riemannian manifold has a discrete spectrum
\[0 \leq \lambda_0 \leq \lambda_1 \leq \cdots. \]
The determinant of the Laplacian should correspond in some sense to the product of the nonzero eigenvalues; however, this is not rigorously defined since the spectrum is unbounded.  Alvarez~\cite{alvarez} interprets this quantity in the case of manifolds with boundary as
\eqb
\log \det \Delta = - \int_\epsilon^\infty t^{-1} \tr(e^{-t\Delta}) dt,
\label{logdet_alvarez}
\eqe
with the $\epsilon$ called an ``ultraviolet cutoff''. This quantity diverges as $\epsilon \to 0$. 

However, the standard approach to regularizing the formal sum ``$\log \det \Delta = \sum_{\lambda_j \neq 0} \log \lambda_j$'' is \emph{zeta regularization}.  To describe this approach, we introduce the following definition:

\begin{defn}
\label{def-zeta_def}
The Minakshisundaram--Pleijel zeta function is the zeta function associated to the Laplace-Beltrami operator of a compact Riemannian manifold; i.e., it is the function $\zeta$ defined for $s \in \C$ with $\Re s > 1$ as
\eqb
\zeta(s) = \sum_{\lambda_j \neq 0} \lambda_j^{-s}. \label{zeta_def}
\eqe
and as the analytic continuation of this expression for other values of $s \in \C$.
\end{defn}

The analytic continuation of this Dirichlet series for a general manifold setting with arbitrary dimension was first studied in \cite{mp-zeta-function}. Note that $\zeta$ is a well-defined analytic function on the domain  $\Re s > 1$ since (by Weyl's formula~\cite{Weyl1911}) the eigenvalues grow asymptotically at a linear rate, guaranteeing the convergence of the series~\eqref{zeta_def}.  We interpret the function $\zeta$ for $\Re s > 1$ directly in terms of Brownian loops in Proposition~\ref{prop-zeta-brownian-loops}.

Observe that, for $\Re s > 1$, the derivative of $\zeta$ is given by
\eqb
\zeta'(s) = - \sum_{\lambda_j \neq 0} \lambda_j^{-s} \log \lambda_j \label{eqn-zeta-deriv}
\eqe
If we formally substitute $s=0$ into the right-hand size of~\eqref{eqn-zeta-deriv}, we get the formal sum ``$\log \det \Delta = \sum_{\lambda_j \neq 0} \log \lambda_j$'' that we are trying to regularize.  Though the right-hand size of~\eqref{eqn-zeta-deriv} is not strictly defined for $s=0$, we \emph{have} defined the left-hand size of~\eqref{eqn-zeta-deriv} in Definition~\ref{def-zeta_def} by analytic continuation.  This motivates the following definition.

\begin{definition}
The \emph{zeta-regularized determinant of the Laplacian} (or the \emph{functional determinant}) on a compact manifold with or without boundary is $e^{-\zeta'(0)}$, where $\zeta$ is the Minakshisundaram--Pleijel zeta function defined in \eqref{zeta_def}.
\label{def-zeta-det}
\end{definition}

In preparation for the next section, in which we relate the zeta-regularization of $\det \Delta$ to Brownian loops, we now describe the standard approach for deriving an explicit expression for this derivative $\zeta'(0)$---proving in the process that $\zeta$ indeed extends analytically to a neighborhood of $s=0$.  We first express $\zeta$ for $\Re s > 1$ in terms of the \emph{Mellin transform} of $\tr(e^{-t\Delta}) $~\cite{mellin}:
\eqb \label{eq_mellin}
\zeta(s) = \sum_{\lambda_j \neq 0} \lambda_j^{-s} = \sum_{\lambda_j \neq 0} \frac{1}{\Gamma(s)} \int_0^
\infty t^{s-1} e^{-t \lambda_j} \ dt = \frac{1}{\Gamma(s)} \int_0^\infty t^{s-1} (\tr (e^{-t \Delta}) - n) dt.
\eqe
Here, $n \in \{0,1\}$ denotes the multiplicity of the zero eigenvalue; i.e., $n=1$ for closed manifolds, and $n=0$ for manifolds with boundary.  
The expression~\eqref{eq_mellin} is not defined for $s=0$ since the integrand $ t^{s-1} (\tr (e^{-t \Delta}) - n)$ blows up too fast near $t=0$.  However, we can express~\eqref{eq_mellin} as an analytic function that \emph{is} defined in a neighborhood of $s=0$.  

The key ingredient in deriving this alternative expression is the \emph{short-time expansions} of the trace $\tr (e^{-t \Delta})$ of the heat kernel for manifolds with and without boundary, originally derived by~\cite{mckean-singer} (see also \cite[Section 1]{osgood-phillips-sarnak}).  For closed manifolds, it is given by
\eqb \tr (e^{-t\Delta} ) = \frac{\Vol_g(M)}{4\pi t}  + \aconst + O(t); \label{eqn-short-time-closed} \eqe
and, for manifolds with boundary, it is given by
\eqb \tr(e^{-t\Delta}) = \frac{\Vol_g(M)}{4\pi t} - \frac{\Len_g(\partial M)}{8\sqrt{\pi t}} + \aconst + O(t^{1/2}).\label{eqn-short-time-boundary} \eqe
These short-time expansions allow us to express the quantity $\tr (e^{-t \Delta}) - n$---in both the closed manifold and manifold with boundary cases---as the sum of an expression of the form \eqb a/t + b/\sqrt{t} + c \label{eq-short-time} \eqe and a quantity that decays sufficiently fast as $t \rta 0$.  
 This decomposition allows us to write  the integral  in~\eqref{eq_mellin} as the sum of a convergent integral and an integral of the form
\[
 \frac{1}{\Gamma(s)} \int_0^\ep t^{s-1} (a/t + b/\sqrt{t} + c) dt,
\]
for some fixed $\ep > 0$,\footnote{One could just take $\ep$ to equal, say, $1$. This expression also appears in the proofs of Theorem~\ref{thm-loop-det} and Proposition~\ref{prop-decay} with $\ep = \delta/2$.}
which evaluates to
\[
 \frac{a}{(s-1) \Gamma(s)} \ep^{s-1}  +  \frac{b}{(s - 1/2) \Gamma(s)} \ep^{s-1/2}  + \frac{c}{\Gamma(s+1)} \ep^s
\]
where we have used the identity $s \Gamma(s) = \Gamma(s+1)$.  The latter expression clearly extends analytically to a neighborhood of $s=0$, using the well known fact that $\Gamma$ is a meromorphic function on $\mathbb C$ (with simple poles at non-positive integers) and $\Gamma(s)^{-1}$ is analytic in a neighborhood of zero.  Hence, we have obtained an explicit expression for $\zeta$ that extends analytically to a neighborhood of $s=0$, and we can define $\zeta'(0)$ explicitly by differentiating this expression at $s=0$.

\section{Relating the regularized Laplacian determinant and Brownian loop mass} \label{sec-relate}

We now establish connections between our regularized notions of the determinant of the Laplacian and the mass of Brownian loops. Recall that for a Brownian loop $\eta$ we write $\nu(\eta)$ to denote half the quadratic variation of $\eta$. The first connection is easy to establish:

\begin{prop}\label{prop-reg-log-det}
For manifolds with boundary, the total mass of loops $\eta$ with $\nu(\eta)>\delta$ under the Brownian loop measure can be expressed as
\[
\int_\delta^{\infty} t^{-1} \tr(e^{-t\Delta/2}) dt.
\]
This expression is the negative of~\eqref{logdet_alvarez} with $\ep = \delta/2$, i.e., the negative of the logarithm of the regularized determinant of the Laplacian in the Alvarez ``ultraviolet cutoff'' normalization with cutoff $\ep = \delta/2$.
\end{prop}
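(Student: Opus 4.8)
The plan is a direct unfolding of the definition of $\mu^{\mathrm{loop}}$ followed by an interchange of integrals. First I would reduce the computation from the unrooted to the rooted loop measure. The quantity $\nu$ is invariant under the rerooting maps $\theta_r$, so $\{\nu(\eta)>\delta\}$ is a union of equivalence classes and descends to a well-defined measurable set of unrooted loops; since $\mu^{\mathrm{loop}}$ is by definition the pushforward of $\mu^{\mathrm{rooted}}$ under the map sending a rooted loop to its equivalence class, the mass of this set is the same under either measure. Thus it suffices to compute the $\mu^{\mathrm{rooted}}$-mass of $\{\nu(\eta)>\delta\}$.

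Next I would expand $\mu^{\mathrm{rooted}} = \int_M \nu(\eta)^{-1}\,\mu(z,z)\,\Vol(dz)$ and $\mu(z,z) = \int_0^\infty \mu(z,z;t)\,dt$. By the property recalled in Section~\ref{sec-brownian-loops} that almost every loop in the support of $\mu(z,\cdot;t)$ has quadratic variation $2t$ — so that $\nu\equiv t$ on the support of $\mu(z,z;t)$ — the measure $\nu(\eta)^{-1}\mu(z,z)$ equals $\int_0^\infty t^{-1}\mu(z,z;t)\,dt$, and restricting to $\{\nu(\eta)>\delta\}$ simply removes the contribution of $t\le\delta$. Using that the total mass of $\mu(z,w;t)$ is the heat kernel value $p(z,w;t)$ of $e^{-t\Delta/2}$ — which follows from the definition of $\mu(z,w;t)$ as a weak limit of normalized restrictions of $\mu(z,\cdot;t)$ together with continuity of $p(z,\cdot;t)$ — the mass that $\nu(\eta)^{-1}\mu(z,z)$ assigns to $\{\nu(\eta)>\delta\}$ is $\int_\delta^\infty t^{-1}p(z,z;t)\,dt$. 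The integrand is nonnegative, so Tonelli's theorem lets me integrate over $z\in M$ and exchange the order of integration:
\[
\mu^{\mathrm{rooted}}\bigl(\{\nu(\eta)>\delta\}\bigr) = \int_M \int_\delta^\infty \frac{p(z,z;t)}{t}\,dt\,\Vol(dz) = \int_\delta^\infty \frac{1}{t}\Bigl(\int_M p(z,z;t)\,\Vol(dz)\Bigr)\,dt.
\]

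Finally I would identify the inner integral as $\tr(e^{-t\Delta/2})$, using that the heat semigroup of a compact manifold with boundary is trace class with trace equal to the integral of its kernel along the diagonal; this yields $\int_\delta^\infty t^{-1}\tr(e^{-t\Delta/2})\,dt$, the asserted expression. The comparison with~\eqref{logdet_alvarez} is then the substitution $t=2u$, giving $\int_\delta^\infty t^{-1}\tr(e^{-t\Delta/2})\,dt = \int_{\delta/2}^\infty u^{-1}\tr(e^{-u\Delta})\,du$, which is exactly the negative of~\eqref{logdet_alvarez} with $\ep=\delta/2$. I do not expect a genuine obstacle here: the only points requiring care are the bookkeeping of the factor-of-two conventions (Brownian motion generated by $\tfrac12\Delta$, quadratic variation $2t$ over a parametrizing interval of length $t$, and $\nu$ equal to half the quadratic variation) and the verification that $|\mu(z,w;t)|=p(z,w;t)$, together with the (routine) justification of the trace identity and the Tonelli interchange.
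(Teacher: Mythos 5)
Your proposal is correct and follows essentially the same route as the paper's proof: unfold the definition of the Brownian loop measure, use that $\nu\equiv t$ on the support of $\mu(z,z;t)$ and that $|\mu(z,w;t)|=p(z,w;t)$, integrate the heat kernel on the diagonal to get $\tr(e^{-t\Delta/2})$ (the paper does this via the eigenfunction expansion of $p$, which is the same trace identity you invoke), and finish with the substitution $t=2u$. Your write-up just makes explicit the rooted-to-unrooted reduction and the Tonelli interchange that the paper leaves implicit.
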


\begin{proof}
The heat kernel $p$ has the eigenfunction expansion~(see, e.g., \cite[Section 2.6]{heat-kernel})
\[
p(z,w;t) = \sum_{j=0}^{\infty} e^{- \lambda_j t/2} \varphi_j(z) \varphi_j(w)
\]
Hence
\[
\int_M t^{-1} p(z,z;t) \Vol(dz) = \sum_{j=0}^{\infty} e^{- \lambda_j t/2} =  \tr(e^{-t\Delta/2});
\]
integrating over $t > \delta$, we deduce that the $\mu^{loop}$-mass of $\{ \eta \: : \: \nu(\eta) > \delta\}$ is
\[
\int_\delta^{\infty} t^{-1} \tr(e^{-t\Delta/2}) dt = \int_{\ep}^{\infty} u^{-1} \tr(e^{-u\Delta}) du
\]
with $\ep = \delta/2$.
\end{proof}

\begin{remark}
\label{remark-integrand-asym}
From the result of Proposition~\ref{prop-reg-log-det}, we immediately see that the Brownian loop measure assigns infinite mass to arbitrarily small loops from the fact that the integrand $t^{-1} \tr(e^{-t\Delta/2})$ blows up near $0$.  Moreover, from the behavior of the integrand near $\infty$, we immediately see that the Brownian loop measure assigns infinite mass to arbitrarily \emph{large} loops precisely when $\Delta$ has a zero eigenvalue---namely, when the manifold is closed.
\end{remark}

We now relate the regularized Brownian loop mass to  the zeta-regularized determinant of the Laplacian. We showed in Proposition~\ref{prop-reg-log-det} that the regularized mass of loops can be written in terms of the trace of the operator $e^{-t\Delta/2}$, which also appears (with a time change) in the Mellin transform representation~\eqref{eq_mellin} of the $\zeta$ function.  In fact, in the case of manifolds with boundary (for which the Laplacian has no zero eigenvalue),~\eqref{eq_mellin} immediately yields a Brownian loop interpretation of the $\zeta$ function:

\begin{prop}[Brownian loop interpretation of the zeta function]
\label{prop-zeta-brownian-loops}
For manifolds with boundary and each $s$ with $\Re s > 1$, the complex number $\zeta(s)$ defined in \eqref{zeta_def} is the measure of Brownian loops, with each loop weighted by the $s$-th power of its quadratic variation divided by $4^s \Gamma(s)$.
\end{prop}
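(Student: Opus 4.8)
The plan is to unwind the definition of the Brownian loop measure and reduce the weighted loop mass to the Mellin integral $\int_0^\infty t^{s-1}\tr(e^{-t\Delta/2})\,dt$ already encountered in the proof of Proposition~\ref{prop-reg-log-det}, then recognize this quantity as $4^s\Gamma(s)\zeta(s)$ via the eigenfunction expansion of the heat kernel and the standard Gamma integral. Write $\mathrm{qv}(\eta)$ for the quadratic variation of a loop, so $\mathrm{qv}(\eta)=2\nu(\eta)$; recall from Section~\ref{sec-brownian-loops} that $\mu(z,\cdot;t)$-a.e.\ loop has quadratic variation exactly $2t$, so $\mathrm{qv}$ is well defined $\mu^\tloop$-a.e. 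Since $\eta\mapsto\mathrm{qv}(\eta)^s$ descends to a function on \emph{unrooted} loops (it is invariant under re-rooting), integrating it against $\mu^\tloop$ equals integrating it against the rooted Brownian loop measure $\mu^\rooted=\int_M\frac1{\nu(\eta)}\mu(z,z)\,\Vol(dz)$. Plugging in the disintegration $\mu(z,z)=\int_0^\infty\mu(z,z;t)\,dt$ and using that on the support of $\mu(z,z;t)$ one has $\mathrm{qv}(\eta)=2t$, $\nu(\eta)=t$, and $|\mu(z,z;t)|=p(z,z;t)$, we obtain
\[
\int \mathrm{qv}(\eta)^s\,d\mu^\tloop(\eta)=\int_M\int_0^\infty\frac{(2t)^s}{t}\,p(z,z;t)\,dt\,\Vol(dz)=2^s\int_0^\infty t^{s-1}\tr(e^{-t\Delta/2})\,dt,
\]
where the interchange of the $\Vol(dz)$- and $dt$-integrals is Tonelli (the integrand is nonnegative for real $s$, and the general $\Re s>1$ case follows by absolute integrability), and $\int_M p(z,z;t)\,\Vol(dz)=\tr(e^{-t\Delta/2})$ is precisely the identity used in the proof of Proposition~\ref{prop-reg-log-det}.

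For the second step, expand $\tr(e^{-t\Delta/2})=\sum_j e^{-\lambda_j t/2}$. Since the manifold has boundary, $\lambda_j>0$ for all $j$, and by Weyl's law $\sum_j\lambda_j^{-\Re s}<\infty$ for $\Re s>1$; this both justifies exchanging the sum with the $dt$-integral and guarantees convergence of the integral at $t=0$, near which the integrand is of order $t^{\Re s-2}$. Applying $\int_0^\infty t^{s-1}e^{-\lambda_j t/2}\,dt=\Gamma(s)(\lambda_j/2)^{-s}=2^s\Gamma(s)\lambda_j^{-s}$ and summing gives
\[
2^s\int_0^\infty t^{s-1}\tr(e^{-t\Delta/2})\,dt=2^s\sum_j 2^s\Gamma(s)\lambda_j^{-s}=4^s\Gamma(s)\sum_j\lambda_j^{-s}=4^s\Gamma(s)\zeta(s).
\]
Combining with the first display and dividing by $4^s\Gamma(s)$ yields $\zeta(s)=\frac{1}{4^s\Gamma(s)}\int\mathrm{qv}(\eta)^s\,d\mu^\tloop(\eta)$, which is the assertion.

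There is no deep obstacle here; the proof is essentially bookkeeping of constants together with the standard Fubini/Tonelli justifications. The one point worth emphasizing is the precise role of the two hypotheses: the restriction $\Re s>1$ is exactly what makes the $t\to 0$ end of the Mellin integral converge, i.e.\ what tames the divergence coming from arbitrarily small Brownian loops, while the assumption that $M$ has boundary — hence that $\Delta$ has no zero eigenvalue — is what makes the $t\to\infty$ end converge (the large-loop divergence of Remark~\ref{remark-integrand-asym}). One should also state at the outset that $\mathrm{qv}$ is $\mu^\tloop$-a.e.\ defined and re-rooting invariant, since this is what legitimizes passing between $\mu^\tloop$ and $\mu^\rooted$ in the first computation.
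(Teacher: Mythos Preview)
Your proof is correct and follows essentially the same approach as the paper: the paper does not give a standalone proof but simply remarks that the proposition follows immediately from the Mellin representation~\eqref{eq_mellin} (with $n=0$) combined with the computation in Proposition~\ref{prop-reg-log-det}. Your argument is a careful, fully written-out version of exactly this, tracking the constants through the time change $t\mapsto t/2$ and the substitution $\mathrm{qv}(\eta)=2\nu(\eta)$, and your remarks on the roles of the hypotheses $\Re s>1$ and $\partial M\neq\emptyset$ are accurate and well placed.
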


To relate $-\zeta'(0)$ to our notion of regularized loop mass, we implement the approach described at the end of the Section~\ref{sec-reg-det} to derive an explicit expression for $\zeta'(0)$, and then we relate this explicit expression to the integral form of the truncated mass of Brownian loops that we proved in Proposition~\ref{prop-reg-log-det}.  As we noted in Section~\ref{sec-reg-det}, we will use the short-time expansions~\eqref{eqn-short-time-closed} and~\eqref{eqn-short-time-boundary} of the trace of the heat kernel for manifolds with and without boundary.  In both of the following proofs, we use the fact we mentioned in Section~\ref{sec-brownian-loops} that the quadratic variation of $\mu^{\text{loop}}$-a.e. loop is double the length of its parametrizing interval.

\begin{proof}[Proof of Theorem~\ref{thm-loop-det}, boundary case]
Combining the short-time expansion~\eqref{eqn-short-time-boundary} with \eqref{eq_mellin} with $n=0$ yields, for $\Re s>1$,
\alb
\zeta(s) =& \frac{1}{\Gamma(s)} \int_{\delta}^\infty t^{s-1} \tr(e^{-t \Delta}) dt + \frac{1}{\Gamma(s)} \int_0^{\delta} t^{s-1} \left( \tr(e^{-t\Delta}) - \frac{\Vol_g(M)}{4\pi t} + \frac{\Len_g(\partial M)}{8\sqrt{\pi t}} - \aconst  \right) dt \\
&+ \frac{1}{\Gamma(s)} \left(\frac{\Vol_g(M)}{4\pi (s-1)} \delta^{s-1} - \frac{\Len_g(\partial M)}{8\sqrt{\pi} (s-\frac12)} \delta^{s-\frac12}  \right) +\frac{1}{\Gamma(s+1)} \delta^s\aconst.
\ale
Here we have used the identity $s\Gamma(s) = \Gamma(s+1)$. The latter expression for $\zeta$ is analytic in a region that includes both the half-plane  $\Re s>1$ and a neighborhood of the origin, so $\zeta'(0)$ is given by the derivative of the above at $s=0$.  Since $\lim_{s\to 0} s\Gamma(s) = 1$ and $\frac{d}{ds}\big|_{s=0} \frac{1}{\Gamma(s+1)} = \upgamma$, we get
\alb
\zeta'(0) =& \int_{\delta}^\infty t^{-1} \tr(e^{-t\Delta}) \ dt + \int_0^{\delta} O(t^{-1/2}) dt - \delta^{-1} \frac{\Vol_g(M)}{4\pi} +2 \delta^{-1/2} \frac{\Len_g(\partial M)}{8\sqrt{\pi}} \\
&+ \log \delta  \aconst + \upgamma \aconst.
\ale
By Definition~\ref{def-zeta-det}, $\zeta'(0) = -\log \det_{\zeta} \Delta$.  And the first term on the right-hand side is a regularized Brownian loop mass:
\[
\int_{\delta}^\infty t^{-1} \tr (e^{-t\Delta}) dt = \int_{2\delta}^\infty u^{-1} \tr (e^{-u \Delta/2}) du,
\]
which equals the mass of Brownian loops with quadratic variation greater than $4\delta$.  This completes the proof.

\end{proof}
We now prove the theorem for closed manifolds, for which we also truncate loops $\eta$ with $\nu(\eta) > C$. The proof is similar to the above.
\begin{proof}[Proof of Theorem~\ref{thm-loop-det}, closed case]
Combining the short-time expansion~\eqref{eqn-short-time-boundary} with \eqref{eq_mellin} with $n=1$ yields, for $\Re s>1$,
\alb
\zeta(s) =& \frac{1}{\Gamma(s)} \int_{\delta}^\infty t^{s-1} (\tr(e^{-t \Delta}) -1) dt + \frac{1}{\Gamma(s)} \int_0^{\delta} t^{s-1} \left( \tr(e^{-t\Delta}) - \frac{\Vol_g(M)}{4\pi t}  - \aconst  \right) dt \\
&+  \frac{\Vol_g(M)}{4\pi (s-1)\Gamma(s)} \delta^{s-1} +\frac{1}{\Gamma(s+1)} \left( \aconst - 1\right)\delta^s.
\ale
Written in the above form, it is clear that $\zeta(s)$ extends holomorphically to a neighborhood of $s=0$. Since $\lim_{s\to 0} s\Gamma(s) = 1$ and $\frac{d}{ds}\big|_{s=0} \frac{1}{\Gamma(s+1)} = \upgamma$, we have
\eqb \label{eqn_noboundary}
\zeta'(0) = \int_{\delta}^\infty t^{-1} (\tr(e^{-t\Delta})-1) \ dt - \delta^{-1} \frac{\Vol_g(M)}{4\pi} + (\log \delta +\upgamma) \left(\aconst-1\right)+O(\delta).
\eqe
The term $\int_{\delta}^\infty t^{-1} ( \tr (e^{-t\Delta}) - 1) dt$ is not quite a regularized Brownian loop mass because of the $-1$ term; we cannot split the integral directly to remove this term since both the integrals $\int_{\delta}^\infty t^{-1} dt$ and $\int_{\delta}^\infty t^{-1}  \tr (e^{-t\Delta}) dt$ diverge.  The latter integral diverges because the Laplacian has a zero eigenvalue---or, from another point of view, because of the infinite mass of arbitrarily large Brownian loops.  (See Remark~\ref{remark-integrand-asym}.)
 We therefore truncate the upper limit of integration:
\[
\int_{\delta}^\infty t^{-1} ( \tr (e^{-t\Delta}) - 1) \,dt = \int_{2\delta}^{2C} u^{-1} (\tr (e^{-u\Delta/2}) - 1) \,du + \int_{2C}^\infty u^{-1} (\tr (e^{-u\Delta/2}) - 1) \,du \]
This equals the $\mu^\tloop_{M,g}$-mass of loops with quadratic variation between $4\delta$ and $4C$, plus
\[
\log \delta - \log C + O(e^{-\alpha C}).
\]
(Note that we have used the fact that, by exponential mixing of Brownian motion, there exists an $\alpha > 0$ for which $\tr e^{-u\Delta/2} = 1 + O(e^{-\alpha u})$ as $u \rta \infty$.)  Substituting the latter expression into~\eqref{eqn_noboundary} and recalling that $\log \det_{\zeta} \Delta = -\zeta'(0)$ completes the proof.
\end{proof}

Finally, we prove Proposition~\ref{prop-decay}, which relates a different regularization of the mass of Brownian loops on a closed manifold to the zeta-regularized determinant of its Laplacian.

\begin{proof}[Proof of Proposition~\ref{prop-decay}]
Our starting point for the proof is equation~\eqref{eqn_noboundary}.  We can express the integral $\int_{\delta}^\infty t^{-1} (\tr(e^{-t\Delta})-1) \ dt$ on the right-hand side of~\eqref{eqn_noboundary} as the sum
\eqb \label{eq-decay-loops}
\int_{\delta}^\infty t^{-1} e^{-\kappa t} \tr (e^{-t\Delta}) \ dt - \int_{\delta}^\infty t^{-1} e^{-\kappa t}\ dt +  \int_{\delta}^\infty t^{-1} (1-e^{-\kappa t})( \tr (e^{-t\Delta})-1) \ dt.
\eqe
We examine each of the terms of \eqref{eq-decay-loops} in turn.  The first term is equal to
\[
\int_{2\delta}^\infty u^{-1} e^{-\kappa u/2} \tr (e^{-u\Delta/2}) \ du,
\]
which, in turn, is equal to the $\mu^\tloop_{M,g,\kappa}$-mass of loops with quadratic variation larger than $4\delta$.  (Here we are using the fact from Section~\ref{sec-brownian-loops} that  the quadratic variation of a loop sampled from $\mu^\tloop_{M,g,\kappa}$ is almost surely equal to double the length of its parametrizing interval.)

Next, using an expansion of the incomplete Gamma function, we have
\[
- \int_{\delta}^\infty t^{-1} e^{-\kappa t}\ dt  = \upgamma + \log (\delta \kappa) + O(\delta \kappa).
\]
Finally, we split the third term of~\eqref{eq-decay-loops} into two parts and bound them separately. For $t \in (\delta, -\log \kappa)$ we have $1 - e^{\kappa t} = O(\kappa t)$, and by~\eqref{eqn-short-time-closed} we have $\tr e^{-t \Delta} - 1 = O(1 + \frac1t)$.  Therefore,
\[
\int_{\delta}^{-\log \kappa} t^{-1} (1-e^{-\kappa t})( \tr (e^{-t\Delta})-1) \ dt = O(\delta^{-2}\kappa (\log \kappa)^2).
\]
And, by the exponential mixing of Brownian motion, we can choose $\alpha > 0$ such that $\tr (e^{-t \Delta}) - 1 = O(e^{-\alpha t})$ for all large $t$. Thus,
\[
\int_{-\log \kappa}^{\infty} t^{-1} (1-e^{-\kappa t})( \tr (e^{-t\Delta})-1) \ dt = O(e^{\alpha \log \kappa}).
\]
Plugging the above four equations into~\eqref{eq-decay-loops}, we conclude that the integral $\int_{\delta}^\infty t^{-1} (\tr(e^{-t\Delta})-1) \ dt$ equals the $\mu^\tloop_{M,g,\kappa}$-mass of loops with quadratic variation larger than $4\delta$, plus
\[  \upgamma + \log \delta + \log \kappa + o(1),\]
where, for fixed $\delta$, $o(1)$ is a term that goes to zero as $\kappa \to 0$. Substituting this expression for $\int_{\delta}^\infty t^{-1} (\tr(e^{-t\Delta})-1) \ dt$ in equation~\eqref{eqn_noboundary} completes the proof.
\end{proof}

Finally, we discuss the heuristic interpretation of $(\det_\zeta \Delta_g)^{-\cc/2}$ as the partition function of the intensity-$\cc$ Brownian loop soup on a closed manifold $(M, g)$ (the case with boundary is similar). Recall from Section~\ref{sec-intro} that the intensity-$\cc$ Brownian loop soup is an intensity-$\cc$ Poisson point process sampled from the loop measure $\mu^\tloop$ on $(M,g)$.

 Suppose that we have a probability measure on closed manifolds $(M,g)$ with $\Vol_g(M) = A$ a.s. for some constant $A$, and we want to weight each $(M,g)$ by the partition function of the intensity-$\cc$ Brownian loop soup on $(M,g)$. As in the discrete setting (see the end of Section~\ref{sec-discrete}), we want to interpret the intensity-$\cc$ Brownian loop soup partition function on $(M,g)$ as $\exp(\cc |\mu^\tloop_{M,g}|)$, but now $\mu^\tloop_{M,g}$ is an infinite measure so $\exp(\cc |\mu^\tloop_{M,g}|) = \infty$. We consider instead a ``truncated partition function'', in which we replace $|\mu^\tloop_{M,g}|$ with the $\mu^\tloop_{M,g}$-mass of loops with quadratic variation between $4\delta$ and $4C$. Theorem~\ref{thm-loop-det} gives an expansion of this loop mass whose divergent terms depend only on $A, \delta$ and $C$ (and not on the particular choice of $(M,g)$), and can therefore be absorbed by the normalizing constant. Thus, if we weight our probability measure on manifolds by the truncated partition function, each surface $(M,g)$ is weighted by $Z^{-1}(1+o(1))(\log \det'_\zeta \Delta_g)^{-\cc/2}$. If we had sufficient uniform control on the $o(1)$ error over the set of manifolds $(M,g)$ in the support of our probability measure, then in the $\delta \to 0, \,\, C \to \infty$ limit we would indeed be weighting each surface by $Z^{-1} (\log \det'_\zeta \Delta_g)^{-\cc/2}$. (This would certainly be the case if the probability measure on manifolds were supported on a {\em finite} collection of manifolds, e.g., manifolds obtained as smoothed versions of random planar maps.) See Question~\ref{ques-loop-version}.
 
In the more general setting in which we do not fix the volume of the surfaces we consider, we can instead perform a weighting by the \emph{volume-adjusted} mass of Brownian loops. That is, we weight each manifold $(M,g)$ by $\exp(\cc (\text{truncated loop mass}))$ \emph{divided} by a term that depends on $\Vol_g(M)$,  $\delta$ and $C$.  The latter term corresponds to the the higher-order terms in the expansion of Theorem~\ref{thm-loop-det}.

We can adapt the above to the case of manifolds with boundary, in which we must also account for the boundary term.

\section{Regularizing LQG surfaces via square subdivisions} \label{sec-square-subdivision}

In this section we prove a rigorous version of the heuristic definition of LQG we described in Section~\ref{sec-intro} (``Definition''~\ref{def-original}) in terms of a regularized version of the heuristic LQG metric.  
As we described in Section~\ref{sec-intro}, the key to our approach in regularizing the heuristic LQG metric is regularizing the underlying field $h$ at a \emph{quantum (LQG) scale} rather than a Euclidean scale; i.e., by mollifying $h$ at each point at a scale that is roughly uniform in size, not with respect to the Euclidean metric, but with respect to the ``LQG metric''.  

We have divided this section into four parts:
\begin{itemize}
\item
We begin in Section~\ref{sec-reg-intuition} by motivating our choice to regularize the field on a quantum scale.
\item In Section~\ref{section-approx-metric}, we define our quantum-scale regularized version of the heuristic LQG metric.  
\item In Section~\ref{section-approx-surface}, we use this quantum-regularized metric to define a closed manifold approximation of LQG, and we prove Theorem~\ref{thm-laplacian-reweighting} (which we loosely stated in Section~\ref{sec-intro} as Theorem~\ref{thm-weighting}) that links this particular approximation to the original heuristic definition of LQG.
\item In Section~\ref{sec-interpretation1}, we interpret our result for $\cc \in (-\infty,1)$ by relating our approximation to a (deterministic volume) quantum sphere.
\item In Section~\ref{sec-interpretation2}, we describe the implications of our theorem in the case of matter central charge $\cc \in (1,25)$.
\end{itemize}

\subsection{Motivation for regularizing on a ``quantum scale''}
\label{sec-reg-intuition}

Before describing our regularization method explicitly, we will elaborate on the motivation for mollifying the field on a quantum rather than a Euclidean scale.

The most obvious and common approach to regularizing LQG is mollifying the field $h$ on a Euclidean scale---replacing $h$ by a function $h_\ep$ that may represent the average of $h$ on a Euclidean circle of radius $\ep$~\cite{shef-kpz}, or the convolution of $h$ with $\theta(\cdot/\ep)$ for an appropriate fixed non-negative measure $\theta$ (see~\cite{shamov-gmc,berestycki-gmt-elementary}). This is how the LQG measure is generally defined: for such choices of $h_\ep$, the LQG measure associated to $h$~\cite{kahane} is defined\footnote{The Gaussian multiplicative chaos measure associated to the field $h$ is defined in the same way but with the factor $\ep^{\gamma^2/2}$ replaced by $\exp(-\frac{\gamma^2}2 \E[h_\ep(z)^2])$ in \eqref{def-measure} \cite{rhodes-vargas-review}. This amounts to multiplying the planar measure $\mu_h$ by a deterministic Radon-Nikodym derivative.} as the vague limit in probability of the regularized measures
\eqb
\mu_h^{\epsilon}(z) := \epsilon^{\gamma^2/2} e^{ \gamma h_{\epsilon}(z)} \Vol_{g_0}(dz),
\label{def-measure}
\eqe
where the {\em coupling constant} $\gamma$ is defined as the unique solution in $(0,2]$ of the equations\footnote{In the physics literature, the parameter $b = \frac{\gamma}{2}$ often appears instead of $\gamma$.}
 \eqb
 \cc = 25 - 6Q^2, \qquad Q = \frac{\gamma}{2} + \frac{2}{\gamma}
 \label{eqn-def-gamma-Q}
 \eqe
 The parameter $Q$ will feature prominently in this section, and is called the {\em background charge}.

As another example of regularizing on a Euclidean scale, the recent construction of the LQG distance function in the series of papers~\cite{dddf-lfpp,local-metrics,lqg-metric-estimates,gm-confluence,gm-uniqueness,gm-coord-change}  characterizes LQG as the limit of Euclidean-scale approximations of the distance function, known as $\ep$-\textit{Liouville first-passage percolation} (LFPP)~\cite{ding-goswami-watabiki, dg-lqg-dim, dddf-lfpp}. In the definition of $\ep$-LFPP, they consider not $e^{\gamma h_\ep}$ but $e^{\xi h_\ep}$, where  $\xi = \gamma/d_\gamma$ and $d_\gamma$ denotes the Hausdorff dimension of $\gamma$-LQG~\cite{gp-kpz}.

There are distinct advantages to working with Euclidean-scale approximations of the field.  First of all, they are often more tractable and technically easier to handle. This is a key reason why the papers constructing LQG distance function considered LFPP rather than another natural scheme for approximating LQG distances known as Liouville graph distance, which involves approximating the field on quantum scales; see~\cite[Remark 1.4]{lqg-metric-estimates}.  On a related note, Euclidean-scale approximations of a field behave nicely when we add a function to the field.  For example, if we approximate $h+b$ at a Euclidean scale by mollifying the field with a bump function, then we obtain the same random function as first mollifying $h$ and then adding $b$. As a result, the Euclidean scale approximations %
of the LQG measure and distance function reflect how the LQG measure and distance function transform when we add a constant to the field: when we add a constant $b$ to the field, the volume measure scales by a factor of $e^{\gamma b}$ and the distance function by a factor of $e^{\xi b}$.  (The latter is a special case of the so-called \textit{Weyl scaling} property of the LQG distance function; see, e.g.,~\cite{gm-uniqueness} for the statement of this property.) 

However, to state and prove a precise version of Theorem~\ref{thm-weighting}, it is crucial that we mollify the field $h$ instead on a quantum scale---which we will denote by $h^{\ep}$ with a superscript $\ep$ to distinguish from the Euclidean-scale case. Roughly speaking, we will regularize $h$ by taking its conditional expectation gives its averages on squares in our domain of approximately equal ``quantum size''. Superficially, this is what gives us the ``right'' exponent for the proof to work on a technical level; but there are more conceptual reasons for this approach as well.  First, we want to define a mollified version of the heuristic LQG metric, not simply the LQG measure or distance function associated to that metric.  The Euclidean scale approximations of the LQG measure and distance function cannot not arise as the measure and distance function associated to a single well-defined metric tensor: for this to be the case, the exponents  $\gamma$ and $\xi$ would have to differ by a factor of $2$.  When regularizing on a quantum-scale, this condition does hold: as we will see in Section~\ref{section-approx-metric}, the measure and distance function we get in this case are of the form $e^{2 h^\ep/Q}$ and $e^{h^\ep/Q}$, respectively. Therefore, we can view these versions of the regularized measure and distance function as corresponding to a single regularized metric of the form $e^{2 h^\ep/Q} (dx^2 +dy^2)$; see Definition~\ref{def-metric-regularized} below.

There are other ways in which the quantum-scale approach appears to be more geometrically natural.  The LQG measure and distance function are well-defined on equivalence classes of domain-field pairs known as {\em quantum surfaces}~\cite{shef-kpz,shef-zipper,wedges}:
\begin{defn}
A quantum surface is an equivalence classes of pairs $(D,h)$, where $D$ is a domain in $\BB{C}$ and $h$ a generalized function on $D$, with two such pairs $(\wt D,\wt h)$ and $(D,h)$ taken to be equivalent if $\wt h$ is equal to $h \circ f + Q \log|f'|$ for some conformal mapping $f: \wt D \rta D$. We similarly define a quantum surface with $k$ marked points as an equivalence class of tuples $(D,h,x_1,\ldots,x_k)$; the equivalence relation is defined the same way, only with the extra condition that the corresponding conformal map sends the marked points of one tuple to the marked points of the equivalent tuple.
\end{defn}
Both the LQG measure and LQG distance function are well-defined on these equivalence classes: if $(\wt D,\wt h)$ and $(D,h)$ are equivalent, then the LQG measure and distance function associated to $\wt h$ on $\wt D$ are equal to the pullback of the LQG measure and distance function associated to $h$ on $D$. 
A key feature of a quantum-regularized metric, such as that defined in Definition~\ref{def-metric-regularized}, is that it is \emph{also} well-defined on these equivalence classes: the pullback of  $e^{2 h^\ep/Q} (dx^2 +dy^2)$ under $f$ is $e^{2 (h^\ep \circ f)/Q} |f'|^2 (dx^2 +dy^2)$, which is equal to $e^{2 \wt h^{\ep}/Q} (dx^2 +dy^2)$.  In other words, the quantum-regularized LQG measure and distance function associated to $\wt h$ on $\wt D$ are equal to the pullback of the quantum-regularized LQG measure and distance function associated to $h$ on $D$.  

Another reason that the quantum-scale approach is more conceptually appealing from a geometric perspective is that thinking of the LQG metric as $e^{2h/Q}(dx^2+dy^2)$---or, equivalently, the metric tensor whose expression  in Euclidean coordinates is $\left( \begin{array}{cc} e^{h/Q} & 0\\ 0 & e^{h/Q} \end{array} \right)$---complements the heuristic definition of \textit{imaginary geometry flow lines} (as defined in~\cite{ig1}) as flow lines of the vector field $e^{i h/\chi}$, where $\chi = 2/\gamma - \gamma/2$.  These flow lines are well-defined on equivalence classes of so-called \textit{imaginary surfaces}.  This reinforces the notion---apparent from the machinery of the ``quantum zipper'' \cite{shef-zipper}---that the theories of quantum surfaces and imaginary surfaces are dual in some sense.

Finally, representing the heuristic LQG metric tensor as ``$e^{2h/Q}(dx^2+dy^2)$'' has the advantage of not explicitly invoking the parameter $\gamma$, which means that it can yield a candidate picture of LQG for values of matter central charge $\cc \in (1,25)$, for which the corresponding $\gamma$ values are complex, but $Q$ is still real and nonzero.  This observation is the basis of the description of LQG in this phase proposed in the recent paper~\cite{ghpr-central-charge}.  To be clear, their paper does not explicitly make this observation or refer to the form of the LQG metric with a factor of $2/Q$.  However, their random planar map model is based on essentially the same quantum-scale approximation of the field that we consider in this paper. We will say more about the work in~\cite{ghpr-central-charge} in Section~\ref{sec-interpretation2}.

\subsection{Defining the LQG metric approximation on the unit square}\label{section-approx-metric}

In order to precisely describe our regularization approach, we recall the definition of the GFF on the sphere; see \cite{dkrv-lqg-sphere} for further details. We can either view it as a distribution modulo additive constant or define it as a distribution by fixing some normalization.  We describe the latter approach in two stages:
\begin{description}
\item[1) Identifying the equivalence class.]
Let $\wh \C = \C \cup \{ \infty\}$ be the Riemann sphere, and let
$g_0 = e^{2 \sigma_0} dz$ be a smooth metric on $\wh \C$.  With $M:=(\wh \C, g_0)$, let $H_0^1(M)$ denote the Sobolev space  defined as the completion of the set of mean-zero smooth functions $\{f \in  C^\infty(M) \: : \: \int_\C f \ d\Vol_{g_0} = 0 \}$, with respect to the Dirichlet inner product
\[  (f_1,f_2)_{\nabla} := (2\pi)^{-1} \int_{\C} \nabla_{g_0} f_1(z) \cdot \nabla_{g_0} f_2(z) \ d\Vol_{g_0}.\]
Let
\[
\wt h := \sum_j \alpha_j f_j,
\]
for $f_j$ an orthonormal basis of $H_0^1(M)$ and $\alpha_j$ an independent collection of standard Gaussians.  This sum converges almost surely in the space of distributions. It is an immediate consequence that, for any $f \in H^1_0(M)$, the random variable $(\wt h, f)_\nabla$ is a centered Gaussian of variance $(f,f)_\nabla$. For any mean-zero function $\phi \in L^2(M)$ one can make sense of $\Delta^{-1}_{g_0} \phi \in H^1_0(M)$ \cite[Theorem 4.7]{aubin_1982}, so we may define the $L^2$ inner product
\[(\wt h, \phi) := -(\wt h, \Delta^{-1}_{g_0} \phi)_{\nabla}.\]

\item[2) Fixing the normalization.]
The distribution $\wt h$ can be understood as representing a GFF modulo additive constant; to obtain a bona fide GFF, we have to fix this additive constant. Let $\phi_0 \in C^\infty (M)$ be a smooth function with $\int_\C \phi_0 \ d \Vol_{g_0} \neq 0$. We define the GFF $h$ on $M$ normalized so that $(h, \phi_0) = 0$ by
\[(h, \phi) := (\wt h, \phi - c\phi_0) \]
with the constant $c = c(\phi)$ chosen so $\phi - c \phi_0$ has mean zero.  
\end{description}

Now, our approach to defining a regularized version of the  heuristic LQG metric on a quantum scale is, roughly speaking, to approximate the LQG metric on a domain by subdividing the domain into dyadic squares that have approximately the same ``LQG size''.  
(A \emph{dyadic square} is any square $S \subset \BB{S}$ with side length $2^{-n}$ and vertices in $2^{-n} \Z^2$ for some $n \in \Z_{\geq 0}$.)
For simplicity, we take our domain to be the flat unit square $\BB{S}:=[0,1]^2$ since it is easy to define the dyadic square subdivision for this domain.

Let $M = (\wh \C, g_0)$ be chosen so that $\sigma_0 \equiv 0$ in a neighborhood of $\BB{S}$, and let $h$ be a GFF on $M$ normalized so that $(h,K_0) = 0$, where $K_0$ is the Gaussian curvature associated to $M$.

For a square $S \subset \BB{S}$, we write $h_S := (h, |S|^{-2}\mathbbm1_S)$ for the average of $h$ on $S$. 
We can approximate the LQG area of the square $S$ in terms of this average $h_S$ by taking $h_\ep$ in the regularized LQG measure~\eqref{def-measure} to be the average of $h$ on an $\ep$-length square centered at $z$.  
 Since this mollification should satisfy continuity estimates analogous to those proved in~\cite{ghm-kpz} for the circle average mollification, we deduce heuristically that 
the area of the square $S$ should approximately equal
\eqb
|S|^{\gamma^2/2 + 2} e^{\gamma h_S} = (A_h(S))^{\gamma Q}, \label{eqn-area-approx}
\eqe
where
\eqb \label{eqn-mass-def}
A_h(S) := e^{h_{S}/Q} |S|.
\eqe
(Alternatively, one could interpret~\eqref{eqn-area-approx} as the conditional expectation of $\mu_h(S)$ given $h_S$, up to a small error; see \cite[Proposition 1.2]{shef-kpz}.) Note that, though the LQG measure is defined only for $\cc < 1$, the quantity $A_h(S)$ is well-defined whenever $Q$ is real and nonzero, i.e., whenever $\cc < 25$.  It is this quantity $A_h(S)$ that we use to define the ``LQG size'' of a square in our subdivision.

\begin{definition}[Definition of the $\ep$-square subdivision]
\label{def-squares}
For $\epsilon > 0$, we define the $\epsilon$-square subdivision $\cS^{\ep}$ of $\BB{S}$ as the set of dyadic squares $S$ in $\BB S$ with $A_h(S) \leq \ep$ and $A_h(S') > \ep$ for every dyadic ancestor $S' \supset S$.
\end{definition}

We can describe the $\ep$-square subdivision as the output of the following algorithmic procedure: starting with the singleton collection of squares $\{ \BB S \}$, we repeatedly take the largest square $S$ in the collection with $A_h(S) > \ep$ and replace it with its four dyadic children. For $\cc \leq 1$, this procedure terminates with probability one, yielding the $\ep$-square subdivision. For $\mathbf{c} \in (1,25)$, this algorithm does not necessarily terminate  (see~\cite{ghpr-central-charge} for an explanation of this phenomenon), but we can still describe the $\ep$-square subdivision as the set of squares $S$ with $A_h(S) < \ep$ that we observe when we run the algorithm for infinitely many steps. See Figure~\ref{fig-square-subdivision-realization} for simulations of $\cS^\ep$ for various values of $\cc$.

From the definition of $A_h(S)$, we immediately observe the following lemma:

\begin{lem}\label{lem-measurable}
The set of squares in the $\ep$-square subdivision $\cS^{\ep}$ is measurable w.r.t.\ the countable collection of quantities $h_S/Q$, as $S$ ranges over the set of all dyadic squares. The function that inputs this collection of quantities and outputs the $\ep$-square subdivision is independent of $Q$. 

Furthermore, if $\cS$ is a fixed finite partition of $\BB{S}$ into dyadic squares, the event that $\cS^{\ep} = \cS$ is measurable w.r.t.\ the finite collection of quantities $\{h_S/Q\}_{S \in \cS}$.  The function that inputs this collection of quantities and outputs the indicator of the event $\cS^{\ep} = \cS$ is independent of $Q$ (or equivalently $\cc$).
\end{lem}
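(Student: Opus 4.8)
The claim follows almost immediately from unwinding the definitions, so the ``proof'' is really a verification that each piece depends on $h$ only through the averages $\{h_S\}$ in the right way. The plan is as follows.

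First I would observe that, by Definition~\ref{def-squares} and the algorithmic description immediately following it, the only information about $h$ that enters into the construction of $\cS^\ep$ is the collection of numbers $A_h(S) = e^{h_S/Q}|S|$, as $S$ ranges over dyadic squares: at each stage of the subdivision algorithm we compare $A_h(S)$ to $\ep$, and the output (whether the algorithm terminates or not) is determined by these comparisons. Hence $\cS^\ep$ is a (deterministic) function of the family $\{A_h(S)\}_S$. Next, since $|S|$ is a deterministic constant (depending only on the dyadic square $S$, not on $h$ or $Q$), knowing $A_h(S)$ for all $S$ is equivalent to knowing $\log A_h(S) - \log|S| = h_S/Q$ for all $S$; that is, the map $\{h_S/Q\}_S \mapsto \cS^\ep$ is well-defined and Borel. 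This gives the first sentence of the lemma. For the second sentence, I would note that the function just described makes no reference to $Q$ at all: it takes as input the real numbers $r_S := h_S/Q$, forms $A_h(S) = e^{r_S}|S|$, and runs the subdivision algorithm. Different values of $Q$ only change what the numbers $r_S$ happen to be (as random variables), not the function applied to them. Hence that function is independent of $Q$, proving the second claim.

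For the ``furthermore'' part I would argue similarly but more carefully, since now I want measurability with respect to the \emph{finite} collection $\{h_S/Q\}_{S \in \cS}$ rather than the full countable family. Fix a finite dyadic partition $\cS$ of $\BB S$. The event $\{\cS^\ep = \cS\}$ can be rewritten, using Definition~\ref{def-squares}, as the conjunction of two families of conditions: (i) $A_h(S) \le \ep$ for every $S \in \cS$, and (ii) $A_h(S') > \ep$ for every dyadic ancestor $S'$ of some $S \in \cS$ (equivalently, for every dyadic square $S'$ strictly containing some member of $\cS$ and contained in $\BB S$). The subtlety is that condition (ii) a priori involves $A_h(S')$ for the proper ancestors $S'$, which are not members of $\cS$, so the event seems to depend on more averages than $\{h_S\}_{S \in \cS}$. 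The key point to make here is the monotonicity $A_h(S'') \le A_h(S')$ when $S'' \subset S'$ is \emph{not} true pointwise in $h$ --- rather, the correct observation is that every proper ancestor $S'$ of a member of $\cS$ is itself a union of some members of $\cS$ (because $\cS$ is a dyadic partition refining the trivial one and $S'$ is a dyadic square containing at least one member of $\cS$ and properly contained in $\BB S$, so by the tree structure of dyadic squares $S'$ is partitioned by those members of $\cS$ it contains). Consequently $h_{S'}$ is the $|S|^2$-weighted average of $\{h_S : S \in \cS,\ S \subset S'\}$, hence $h_{S'}/Q$ is a deterministic function (independent of $Q$) of $\{h_S/Q\}_{S \in \cS}$, and so is $A_h(S')$. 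Therefore both (i) and (ii) are expressed through $\{h_S/Q\}_{S\in\cS}$ by formulas not involving $Q$, and the indicator $\mathbbm 1_{\{\cS^\ep = \cS\}}$ is such a function as claimed.

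The one step that requires a genuine (if short) argument, and which I expect to be the main obstacle, is precisely this last combinatorial point: that for a dyadic partition $\cS$ and a dyadic square $S'$ with $\BB S \supsetneq S' \supseteq$ (some member of $\cS$), the square $S'$ is exactly tiled by the members of $\cS$ it contains, so that its average $h_{S'}$ is a weighted average of $\{h_S\}_{S \in \cS}$. This is a standard fact about the dyadic tree --- any two dyadic squares are either nested or disjoint, and $\cS$ being a partition into dyadic squares means its members form an antichain covering $\BB S$ --- but it is the place where one actually uses that $\cS$ is a dyadic partition and not an arbitrary collection. Everything else is bookkeeping: rewriting $A_h$ in terms of $h_S/Q$, checking Borel measurability of the (countable or finite) family of threshold comparisons, and noting that no formula along the way mentions $Q$ except through the inputs $h_S/Q$ themselves.
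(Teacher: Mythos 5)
Your proof is correct and matches the paper's reasoning, which states the lemma as an immediate consequence of the definition of $A_h(S)$ and gives no further argument. Your careful treatment of the ``furthermore'' part --- observing that each dyadic ancestor $S'$ of a member of $\cS$ is tiled by the members of $\cS$ it contains, so that $h_{S'}/Q$ is a $Q$-independent weighted average of $\{h_T/Q\}_{T\in\cS}$ --- correctly resolves the only point that is not purely formal.
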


Note that the set of squares in the $\epsilon$-square subdivision is not required to be finite; the number of squares is finite almost surely for $\cc \leq 1$, but infinite for small enough $\epsilon$ in the $\cc \in (1,25)$ regime.  %

\begin{figure}[ht!]
    \centering
    \begin{subfigure}[t]{0.3\textwidth}
        \includegraphics[width=\textwidth]{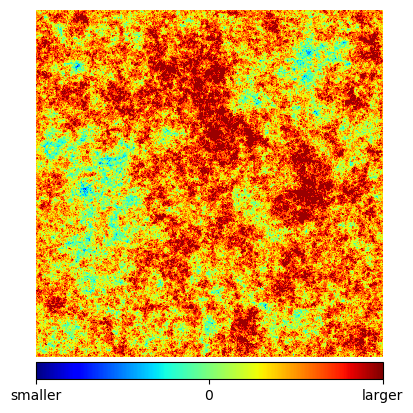}
        \caption{An instance of GFF}
    \end{subfigure}
    \quad
    \begin{subfigure}[t]{0.3\textwidth}
        \includegraphics[width=\textwidth]{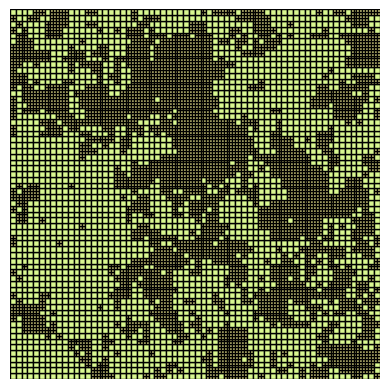}
        \caption{$\cc\approx-83$ ($\gamma=0.5$, $Q=4.25$)}
    \end{subfigure}
    \quad
    \begin{subfigure}[t]{0.3\textwidth}
        \includegraphics[width=\textwidth]{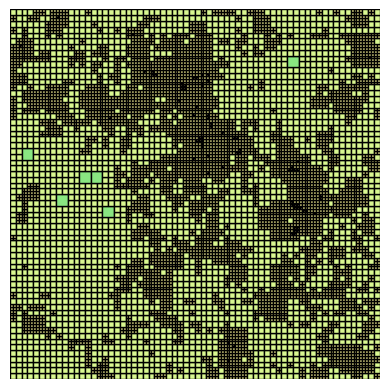}
        \caption{$\cc=-12.5$ ($\gamma=1$, $Q\approx 2.5$)}
    \end{subfigure}\\
    \begin{subfigure}[t]{0.3\textwidth}
        \includegraphics[width=\textwidth]{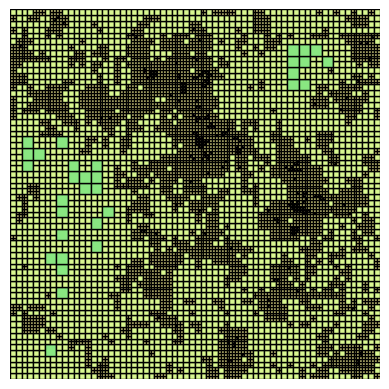}
        \caption{$\cc=0$ ($\gamma=\sqrt{8/3}$, $Q\approx 2.04$)}
    \end{subfigure}
    \quad
    \begin{subfigure}[t]{0.3\textwidth}
        \includegraphics[width=\textwidth]{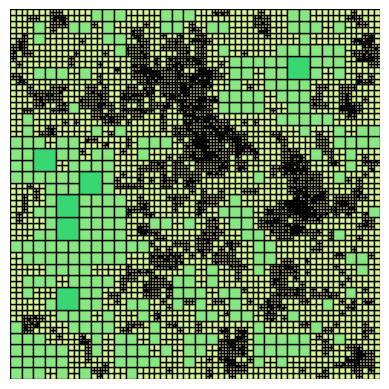}
        \caption{$\cc=19$ ($\gamma\in\C$, $Q=1$)}
    \end{subfigure}
    \quad
    \begin{subfigure}[t]{0.3\textwidth}
        \includegraphics[width=\textwidth]{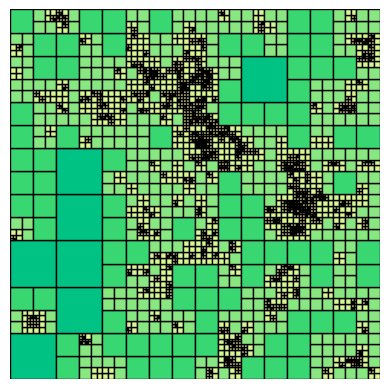}
        \caption{$\cc\approx 23.5$ ($\gamma\in\C$, $Q=0.5$)}
    \end{subfigure}
    \caption{
Illustrations of the $\epsilon$-square subdivision $\cS^{\ep}$ for different values of $\cc$ and $\ep$ equal to $2^{-12}$ times the approximate quantum area $A_h(\BB{S})$ of the unit square.  To generate the simulations in (b)-(f), we approximated the unit square by a $2^{13}\times 2^{13}$ grid, and we took the field $h$ to be a discrete Gaussian free field on this grid with Dirichlet boundary conditions.  We have drawn an instance of this field in (a).  In (b)-(f), we we have drawn the square subdivisions generated by this instance of the field, with the squares colored according to their Euclidean size.  The subdivisions (e) and (f) correspond to values of $\cc$ greater than $1$; for such $\cc$, we obtain infinitely many squares with positive probability that tends to $1$ as $\ep\rightarrow 0$.  We discuss this regime further in Section~\ref{sec-interpretation2} in the context of the relevant work of~\cite{ghpr-central-charge}.
} \label{fig-square-subdivision-realization}
\end{figure}

Roughly speaking, we define our quantum-regularized metric  by replacing $h$ with its conditional expectation  given the averages $h_S$ of $h$ on the set of squares $S$ in $\cS^{\ep}$.  To make this definition more explicit, we introduce the following notation for a region $U \subset \BB{S}$: we let $\tilde{\mathbbm{1}}_S$ denote the mean-zero measure obtained by subtracting from $\mathbbm{1}_S$ a multiple of the measure $K_0$. 

We now define the regularized version of $h$ that we use to define our quantum-regularized metric:

\begin{definition}
\label{def-hS}
Let $\cS$ be a partition of $\BB S$ into dyadic squares.  We define $h^{\cS}$ as the conditional expectation of $h$ given the averages of $h$ on the squares in $\cS$. More explicitly, we define $h^{\cS}$ as the sum of two terms:
\begin{enumerate}
\item the minimal Dirichlet energy function in $H_0^1(\BB C)$ whose $L^2$-inner product with $\tilde{\mathbbm{1}}_S$ equals $(h,\tilde{\mathbbm{1}}_S)$ for each $S \in \cS$, and
\item
a constant chosen so that $h^\cS$ has the same normalization as $h$---i.e., $(h^{\cS},K_0) = 0$.
\end{enumerate}
\end{definition}

By definition of the GFF, we can write the first term in Definition~\ref{def-hS} as a finite linear combination of functions in $H_0^1(\BB C)$ whose coefficients are independent standard Gaussian random variables.

\begin{lem}
\label{lem-alpha-law}
We can express $h^{\cS}$ as a constant plus
\eqb
\sum_{S \in \cS} \alpha^{\cS}_S f^{\cS}_S
\label{eqn-def-hS}
\eqe
where $f^{\cS}_S$ is a $(\cdot,\cdot)_\nabla$- orthonormalization of the set of functions $\{ \Delta^{-1} \tilde{\mathbbm{1}}_S \}_{S \in \cS}$, and the $\alpha^{\cS}_S$ are jointly independent standard Gaussians.  In particular, the function $h^{\cS}/Q$  is a $Q$-independent measurable function of the variables $\alpha^{\cS}_S/Q$.
\end{lem}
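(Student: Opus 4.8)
The plan is to unwind Definition~\ref{def-hS} and identify the first term $h^{\cS}_0$ of $h^{\cS}$ as an orthogonal projection in the Hilbert space $H^1_0(\BB C)$ equipped with the Dirichlet inner product $(\cdot,\cdot)_\nabla$. The starting point is the pairing identity built into the definition of the GFF: for $f \in H^1_0(\BB C)$ one has $(h,f)_\nabla = -(h,\Delta f)$, and for a mean-zero test function $\phi \in L^2$ one has $(g,\phi) = -(g,\Delta^{-1}\phi)_\nabla$ for $g \in H^1_0$, so the defining constraint $(h^{\cS}_0,\tilde{\mathbbm{1}}_S)=(h,\tilde{\mathbbm{1}}_S)$ for all $S\in\cS$ is equivalent to prescribing the $(\cdot,\cdot)_\nabla$-inner products of $h^{\cS}_0$ against the fixed vectors $\{\Delta^{-1}\tilde{\mathbbm{1}}_S\}_{S\in\cS}$. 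By the standard least-norm characterization in Hilbert space, the minimal-Dirichlet-energy function satisfying these constraints is exactly the $(\cdot,\cdot)_\nabla$-orthogonal projection onto the finite-dimensional subspace $V:=\mathrm{span}\{\Delta^{-1}\tilde{\mathbbm{1}}_S:S\in\cS\}$; equivalently $h^{\cS}_0$ is the orthogonal projection of $h$ onto $V$, which is a well-defined random element of $V$ because it depends on $h$ only through the a.s.-finite Gaussian vector $\big((h,\tilde{\mathbbm{1}}_S)\big)_{S\in\cS}$ (and the additive normalization constant in the definition of $h$ is irrelevant since $\tilde{\mathbbm{1}}_S$ is mean-zero).

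Next I would expand $h^{\cS}_0$ in the given orthonormal basis: writing $h^{\cS}_0=\sum_{S\in\cS}\alpha^{\cS}_S f^{\cS}_S$, the fact that $h^{\cS}_0$ is the $(\cdot,\cdot)_\nabla$-projection onto $V$ and $f^{\cS}_S\in V$ gives $\alpha^{\cS}_S=(h,f^{\cS}_S)_\nabla$; since $f^{\cS}_S=\sum_{S'}b_{S,S'}\,\Delta^{-1}\tilde{\mathbbm{1}}_{S'}$ for suitable coefficients, this is the finite linear combination $-\sum_{S'}b_{S,S'}(h,\tilde{\mathbbm{1}}_{S'})$ of the square averages $\{h_S\}_{S\in\cS}$ (using $(h,K_0)=0$ to replace $\tilde{\mathbbm{1}}_{S'}$ by $\mathbbm{1}_{S'}$). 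These quantities are jointly Gaussian and centered, being images of $\wt h=\sum_j\alpha_j f_j$ under linear functionals, and their covariance is $\Cov(\alpha^{\cS}_S,\alpha^{\cS}_{S''})=(f^{\cS}_S,f^{\cS}_{S''})_\nabla=\delta_{S,S''}$ by the defining orthonormality of the $f^{\cS}_S$. Hence the $\alpha^{\cS}_S$ are jointly independent standard Gaussians, and $h^{\cS}=\kappa+\sum_{S\in\cS}\alpha^{\cS}_S f^{\cS}_S$, where $\kappa$ is the constant enforcing $(h^{\cS},K_0)=0$, namely $\kappa=-\big(\sum_S\alpha^{\cS}_S f^{\cS}_S,\,K_0\big)\big/\int_{\wh\C}K_0\,d\Vol_{g_0}$ (the denominator is $2\pi\chi(\wh\C)\neq 0$ by Gauss--Bonnet).

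For the final ``in particular'' clause, the basis functions $f^{\cS}_S$ depend only on the partition $\cS$, not on $h$ or on $Q$, so $h^{\cS}/Q=\kappa/Q+\sum_{S\in\cS}(\alpha^{\cS}_S/Q)\,f^{\cS}_S$, and the displayed formula for $\kappa$ exhibits $\kappa/Q$ as a fixed ($Q$-independent) linear functional of the data $(\alpha^{\cS}_S/Q)_{S\in\cS}$. Therefore $h^{\cS}/Q$ is a $Q$-independent measurable (indeed linear) function of the variables $\alpha^{\cS}_S/Q$, as claimed.

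I do not expect a genuinely hard step: the one point requiring care is the Hilbert-space reduction itself---making rigorous sense of ``the orthogonal projection of $h$'' even though $h\notin H^1_0$, which is handled by observing the projection only sees the a.s.-finite Gaussian vector of square averages---together with the bookkeeping of the additive normalizing constants appearing both in the definition of $h$ and in the definition of $h^{\cS}$, which must be tracked to confirm they do not interfere with the covariance computation or with the $Q$-independence.
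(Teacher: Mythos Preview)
Your proposal is correct and follows exactly the approach the paper intends: the paper does not give a detailed proof of this lemma, offering only the single sentence ``By definition of the GFF, we can write the first term in Definition~\ref{def-hS} as a finite linear combination of functions in $H_0^1(\BB C)$ whose coefficients are independent standard Gaussian random variables,'' and your argument is precisely the natural unpacking of that sentence via orthogonal projection in $(H^1_0,(\cdot,\cdot)_\nabla)$.
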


We will define a regularized LQG metric in terms of the function $h^{\cS^{\ep}}$, which we denote simply by $h^\ep$ to simplify notation.  To motivate this definition, we observe that, for $\cc<1$ and fixed $\ep>0$, we can approximate the LQG measure restricted to a square $S$ in  the $\ep$-square subdivision by
\[
|S|^{\gamma^2/2} e^{\gamma h_S} = ( A_h(S))^{\gamma^2/2} e^{2 h_S/Q} = ( A_h(S))^{\gamma^2/2} e^{2 h^\ep_S /Q},
    \]
Since $A_h(S)$ is approximately equal to $\ep$ (in a sense we will not make precise), the LQG  measure is approximately equal to an $\ep$-dependent constant times $e^{2 h^{\ep}_S/Q}$. In other words, the LQG  measure is approximated  by the measure induced by the metric
\eqb\label{eq-approx-measure}
\ep^{\gamma^2/2} e^{2 h^{\ep}/Q} (dx^2 + dy^2)
\eqe
with $dx^2 + dy^2$ denoting the flat metric on $\BB{C}$.

Alternatively, we can think of approximating the LQG distance function in terms of $h^{\ep}$.   The LQG distance function should be approximated by the distance function induced by the same metric, with a different $\ep$-normalization:
\[
\ep^{\xi Q} e^{2 h^{\ep}/Q} (dx^2 + dy^2)
\]
(One way to see this intuitively is to compare the distance function induced by the latter metric with the Liouville graph distance, defined, e.g., in~\cite{ding-goswami-watabiki}.)

These heuristics motivate the following definition:

\begin{defn}
\label{def-metric-regularized}
On the event that $\cS^{\ep}$ is finite, we define an $\ep$-regularized LQG metric\footnote{We emphasize that the $\ep$-regularized LQG metric is not smooth. Indeed, although $\Delta h^{\ep}$ is well defined as a function, it is typically not continuous; see Lemma~\ref{lem-alpha-law}. We discuss this further in Section~\ref{section-approx-surface}.} on the square $\BB{S}$ with matter central charge $\cc$ as
\[
e^{2 h^{\ep}/Q} (dx^2 + dy^2).
\]
\end{defn}

Importantly, the metric is defined only on the event that $\cS^{\ep}$ is finite. However, it is defined for all values of matter central charge less than $25$, though the induced LQG measure approximation \eqref{eq-approx-measure} is only real-valued for $\cc \leq 1$. This is related to the fact, noted earlier, that $\cS^{\ep}$ is not a.s. finite for $c >1$.

We stress that, though approximating at a quantum scale was critical, our particular square subdivision approach to approximating $h$ at a quantum scale is not canonical.  We could have chosen to define a quantum-regularized version of $h$ by projecting on some other finite-dimensional subspace of $H^1_0(\BB{S})$, and we did not have to choose our domain to be $\BB{S}$.  The reason we chose our particular quantum-scale approximation scheme is that it is easy to describe, and it is closely related to approximations of LQG that have already been described in the literature.  The idea of approximating LQG by subdividing into dyadic squares appears in~\cite{shef-kpz} (in terms of the LQG measure) in the context of the KPZ formula, and more recently in~\cite{ghpr-central-charge} in constructing a proposed model of LQG (as a metric space) for $\cc \in (1,25)$. (We will describe the model in~\cite{ghpr-central-charge} in greater depth in Section~\ref{sec-interpretation2}.)

\subsection{Defining quantum-regularized LQG surfaces and weighting by the zeta-regularized Laplacian determinant}\label{section-approx-surface}

We will implement the regularization idea we described in Section~\ref{section-approx-metric} to construct a quantum-regularized LQG surface with the topology of the sphere. To define the quantum-regularized LQG metric on the sphere, we  essentially just extend the metric we defined in Definition~\ref{def-metric-regularized} to all of $\wh{\BB C}$.  However, we also normalize the metric by adding a constant $C$ in the exponent whose explicit form looks a bit messy.  Before stating the definition of the metric, we explain the motivation for this choice of normalizing constant $C$.

We constructed the metric in Definition~\ref{def-metric-regularized} in such a way that, at least heuristically, all of the squares have quantum (LQG) size about $\ep^{\gamma Q}$.  We now want to essentially extend this definition of metric to the sphere, and we will claim in the next section that, if we set $\ep =1$ and send the number of squares $n$ in $\cS^\ep$ to infinity, this quantum-regularized metric should converge to an LQG surface with the topology of the sphere called the \emph{quantum sphere}.  To that end, we want to normalize our regularized metric so that the sphere has volume approximately equal to some deterministic constant.  We can normalize our regularized metric simply by dividing by the area of the sphere with respect to this approximating metric.  Alternatively, since this area is asymptotically some multiple of $ n$ w.h.p.\ in the $n \rta \infty$ limit (for $\ep = 1$), we can instead normalize our regularized metric by dividing by this multiple of $n$.  More generally, we can normalize by any weighted geometric mean of these two notions of area.  Later in this subsection, we will see that for exactly one choice of weighted geometric mean of these two notions of area, the quantum-regularized LQG metric with this normalization has a particularly nice expression for the zeta-regularized Laplacian determinant.  This is the normalization that we adopt in Definition~\ref{def-lqg-surface-regularized}. Since we ultimately intend to consider random surfaces of the same size, our choice of normalization should not impact the limiting nature of the random surfaces.

We are now ready to state the definition of our quantum-regularized LQG metric on the sphere.

\begin{defn}
\label{def-lqg-surface-regularized}
Let $g_0= e^{2 \sigma_0(z)} dz$ be a (deterministic) smooth metric for which $\sigma \equiv 0$ in a neighborhood of $\BB{S}$, and let $h$ be a GFF on $\wh{\BB C}$ normalized so that $\int h K_0 \ d\Vol_{g_0} = 0$, where $K_0$ is the Gaussian curvature of the sphere with respect to the metric $g_0$. Let $h^{\ep} = h^{\cS^{\ep}}$ be as in Definition~\ref{def-metric-regularized}. 
On the event that the number $n$ of squares in $\cS^{\ep}$ is finite, we define a quantum-regularized LQG surface as the sphere $\wh{\BB{C}} = \BB{C} \cup \{\infty\}$ equipped with the metric 
\[ 
g = e^{2 (h^{\ep}/Q + C)} g_0,
\] 
where the constant $C$ is given by %
\[
C= \frac{1}{4} \log n - \frac{3}{4} \log \Vol_{e^{2 h^{\ep}/Q} g_0}(\wh{\BB C}).
\]
Note that the metric we have defined implicitly depends on the value of the subdivision threshold $\ep > 0$, the number of squares $n$, and the matter central charge $\cc \in (-\infty,25)$ (via $\qq = \qq(\cc)$).  
\end{defn}

The quantum-regularized LQG surface is not smooth, so the short time expansions \eqref{eqn-short-time-closed}, \eqref{eqn-short-time-boundary} are not immediate, although the eigenvalues and the heat kernel are continuously defined (see e.g. \cite{ding2002heat}). Consequently it is not clear to us whether the zeta function can be analytically extended to a neighborhood of zero to define the zeta-regularized Laplacian determinant. We instead make sense of this quantity by the Polyakov-Alvarez formula, an important classical result describing how the zeta-regularized Laplacian determinant transforms under a conformal change of metric.
We note that, in light of Theorem~\ref{thm-loop-det}, the Polyakov-Alvarez formula (in the case of smooth manifolds) can now also be interpreted as a statement about how the regularized mass of Brownian loops changes under a conformal change in metric.

 \begin{proposition}[The Polyakov-Alvarez formula (\cite{polyakov-qg1,alvarez}, see also \cite{osgood-phillips-sarnak})]\label{prop_polyakov_alvarez}
Suppose that $g_0$ and $g = e^{2\sigma} g_0$ are a pair of conformally equivalent smooth metrics on a compact manifold $M$.  Let $K_0$ denote the Gauss curvature with respect to $g_0$, and let $k_0$ denote the geodesic curvature with respect to $g_0$ if $M$ has a boundary.  Then the zeta-regularized determinants of the Laplacian with respect to $g$ and $g_0$ are related by
\begin{align} \label{eq-polyakov-alvarez-closed}
\begin{split} 
&\log {\det}_{\zeta}' \Delta_g = -\frac{1}{12\pi} \int_M |\nabla_{g_0} \sigma|^2 \ d\Vol_{g_0} - \frac{1}{6\pi} \int_M K_0 \sigma \ d \Vol_{g_0} + \log \Vol_{g}(M) \\ &\qquad \qquad \qquad \qquad \qquad \qquad - \log \Vol_{g_0}(M) + \log {\det}'_\zeta \Delta_{g_0}
\end{split}
\end{align}
when $M$ is closed and
\begin{align} \label{eq-polyakov-alvarez-boundary}
\begin{split} 
&\log {\det}_{\zeta} \Delta_g = -\frac{1}{12\pi} \int_M |\nabla_{g_0} \sigma|^2 \ d\Vol_{g_0} - \frac{1}{6\pi} \int_M K_0 \sigma \ d \Vol_{g_0} \\ &\qquad \qquad - \frac{1}{6\pi} \int_{\partial M} k_0 \sigma \ d \Len_{g_0} -\frac{1}{4\pi} \int_{\partial M} \partial_n \sigma \ d \Len_{g_0} + \log {\det}_\zeta \Delta_{g_0},
\end{split}
\end{align}
when $M$ has a boundary.
\end{proposition}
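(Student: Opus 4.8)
The plan is to follow the classical variational derivation (\cite{osgood-phillips-sarnak}, and \cite{alvarez} for the boundary case). I would interpolate between the two metrics along the family $g_u := e^{2u\sigma} g_0$ for $u \in [0,1]$, with $g_0$ and $g$ as endpoints, and compute $\frac{d}{du}\log {\det}'_\zeta \Delta_{g_u}$ (respectively $\frac{d}{du}\log {\det}_\zeta \Delta_{g_u}$ in the boundary case). Since $\log {\det}_\zeta \Delta_{g_u} = -\zeta_u'(0)$, where $\zeta_u$ is obtained from the Mellin transform of $\Tr(e^{-t\Delta_{g_u}}) - n$ as in Section~\ref{sec-reg-det}, it suffices to differentiate $\zeta_u'(0)$ in $u$ and integrate the result over $u \in [0,1]$. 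Note the non-smoothness issues mentioned just before the proposition do not arise here, since all metrics $g_u$ are smooth.

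The first step is a first-variation formula for the heat trace. In two dimensions the Laplace--Beltrami operator transforms as $\Delta_{g_u} = e^{-2u\sigma}\Delta_{g_0}$ on functions, so $\frac{d}{du}\Delta_{g_u} = -2\sigma\Delta_{g_u}$; Duhamel's formula together with cyclicity of the trace then gives
\[
\frac{d}{du}\Tr e^{-t\Delta_{g_u}} = 2t\,\Tr\big(\sigma\,\Delta_{g_u} e^{-t\Delta_{g_u}}\big) = -2t\,\frac{d}{dt}\Tr\big(\sigma\, e^{-t\Delta_{g_u}}\big).
\]
Inserting this into the Mellin representation of $\zeta_u(s)$ and integrating by parts in $t$ (in the closed case first splitting off the limiting value $\frac{1}{\Vol_{g_u}(M)}\int_M \sigma\, d\Vol_{g_u}$ of $\Tr(\sigma e^{-t\Delta_{g_u}})$ as $t\rta\infty$, which does not decay), one finds that $\frac{d}{du}\zeta_u'(0)$ equals twice the constant-in-$t$ term of the short-time expansion of $\Tr(\sigma e^{-t\Delta_{g_u}})$, corrected by $-\frac{d}{du}\log\Vol_{g_u}(M)$ coming from the zero eigenvalue. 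The short-time expansions underlying \eqref{eqn-short-time-closed} and \eqref{eqn-short-time-boundary}, now for $\Tr(\sigma e^{-t\Delta_{g_u}})$ rather than $\Tr(e^{-t\Delta_{g_u}})$, identify that constant term as $\frac{1}{12\pi}\int_M \sigma K_{g_u}\, d\Vol_{g_u}$ in the closed case, and as this quantity plus $\frac{1}{12\pi}\int_{\partial M}\sigma\, k_{g_u}\, d\Len_{g_u} + \frac{1}{8\pi}\int_{\partial M}\partial_n \sigma\, d\Len_{g_0}$ in the boundary case (the last integral is conformally invariant, hence $u$-independent). This yields
\[
\frac{d}{du}\log {\det}'_\zeta \Delta_{g_u} = -\frac{1}{6\pi}\int_M \sigma\, K_{g_u}\, d\Vol_{g_u} + \frac{d}{du}\log \Vol_{g_u}(M)
\]
in the closed case, and the analogous identity with the two boundary integrals added and no volume term in the boundary case.

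The final step is to integrate over $u \in [0,1]$. The volume term integrates directly to $\log\Vol_g(M) - \log\Vol_{g_0}(M)$. For the curvature term I would use the two-dimensional conformal transformation law $K_{g_u}\, d\Vol_{g_u} = (K_{g_0} + u\,\Delta_{g_0}\sigma)\, d\Vol_{g_0}$ (the Liouville equation), which makes the $u$-integral elementary, and then Green's identity $\int_M \sigma\,\Delta_{g_0}\sigma\, d\Vol_{g_0} = \int_M |\nabla_{g_0}\sigma|^2\, d\Vol_{g_0}$ (with a matching boundary correction $-\int_{\partial M}\sigma\,\partial_n\sigma\, d\Len_{g_0}$ in the boundary case) to produce the $-\frac{1}{12\pi}\int_M |\nabla_{g_0}\sigma|^2$ term; the boundary case also uses the geodesic curvature law $k_{g_u}\, d\Len_{g_u} = (k_{g_0} + u\,\partial_n\sigma)\, d\Len_{g_0}$, whose $\sigma\,\partial_n\sigma$ contribution cancels the one from Green's identity, leaving the $\int_{\partial M}k_0\sigma$ and $\int_{\partial M}\partial_n\sigma$ terms of \eqref{eq-polyakov-alvarez-boundary}. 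Collecting the pieces gives \eqref{eq-polyakov-alvarez-closed} and \eqref{eq-polyakov-alvarez-boundary}.

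The step I expect to be the main obstacle is the careful bookkeeping at the ``edges'': in the closed case, tracking the zero eigenvalue correctly through the analytic continuation — the nonzero spectrum of $\Delta_{g_u}$ and the ambient $L^2(g_u)$ inner product both move with $u$ — is exactly what produces the $\log\Vol$ terms and is easy to get wrong; in the boundary case, the delicate points are identifying the correct boundary heat coefficients of $\Tr(\sigma e^{-t\Delta_{g_u}})$ with Dirichlet boundary conditions (the source of the $\frac{1}{4\pi}\int_{\partial M}\partial_n\sigma$ term) and fixing the normal-derivative conventions so that the spurious $\sigma\,\partial_n\sigma$ boundary integrals cancel. Since this is a well-known classical result, in practice I would simply cite \cite{osgood-phillips-sarnak} and \cite{alvarez} rather than reproduce the full computation.
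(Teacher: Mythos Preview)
The paper does not prove this proposition at all; it simply states it with attribution to \cite{polyakov-qg1,alvarez,osgood-phillips-sarnak} and then uses it. Your closing remark that ``in practice I would simply cite \cite{osgood-phillips-sarnak} and \cite{alvarez} rather than reproduce the full computation'' is precisely what the paper does, so in that sense your proposal and the paper's treatment coincide.

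That said, your sketch is a faithful outline of the standard variational proof from \cite{osgood-phillips-sarnak}: the interpolation $g_u = e^{2u\sigma}g_0$, the identity $\frac{d}{du}\Tr e^{-t\Delta_{g_u}} = -2t\,\frac{d}{dt}\Tr(\sigma e^{-t\Delta_{g_u}})$, extraction of the constant heat coefficient of $\Tr(\sigma e^{-t\Delta_{g_u}})$, and integration in $u$ via the Liouville equation and Green's identity are exactly the ingredients of that argument. Your identification of the delicate points (the zero-mode bookkeeping producing the $\log\Vol$ terms, and the Dirichlet boundary heat coefficients producing the $\partial_n\sigma$ term) is also accurate. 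There is nothing to correct here; the paper just treats the result as a black box.
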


Though $g$ is not smooth in our setting, we may extend the definition of the zeta-regularized determinant of the Laplacian via Proposition~\ref{prop_polyakov_alvarez}.  

The expression~\eqref{eq-polyakov-alvarez-closed} is not exactly scale-invariant---i.e., it does not just depend on $\sigma$ modulo additive constant---because of the second and third terms on the right-hand side of~\eqref{eq-polyakov-alvarez-closed}.  We can add a constant to $\sigma$ so that these two terms vanish.  As we described at the beginning of this subsection, we chose the normalizing constant $C$ in Definition~\ref{def-lqg-surface-regularized}---along with the normalization $(h^\ep, K_0) = 0$ of $h^\ep$---so that these two terms vanish for our regularized LQG metric.  (This is just a simple application of the Gauss-Bonnet theorem.) This means that the zeta-regularized Laplacian determinant of our metric, as defined by Proposition~\ref{prop_polyakov_alvarez}, reduces to a deterministic constant times
\eqb
\exp\left(-\frac{1}{12\pi} \int_{\BB C} |\nabla (2 h^{\ep}/Q)|^2 \ dz\right).
\label{eqn-reg-determinant-laplacian}
\eqe

Having defined a notion of Laplacian determinant in our setting, we obtain the following precise formulation of our main result, which we loosely stated in Section~\ref{sec-intro} as Theorem~\ref{thm-weighting}:

\begin{theorem}
\label{thm-laplacian-reweighting}
Fix a positive integer $n$ and $\cc, \cc' \in \BB{R}$ with $\cc$ and $\cc +\cc'$ less than $25$. With $g$ as in Definition~\ref{def-lqg-surface-regularized}, denote by $\mu_{\cc,n}$ the law of $g$  conditioned on the event $\# \cS^{\ep} = n$. If we weight the law $\mu_{\cc,n}$ by the $(-\cc'/2)$-th power of~\eqref{eqn-reg-determinant-laplacian}, then the reweighted law is exactly $\mu_{\cc_{\op{new}},n}$ with $\cc_{\op{new}} = \cc + \cc'$.
\end{theorem}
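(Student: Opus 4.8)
The plan is to compare the two laws $\mu_{\cc,n}$ and $\mu_{\cc_{\op{new}},n}$ directly by writing each as an explicit density with respect to a common reference measure on the finite-dimensional space of configurations, and to check that the Radon--Nikodym derivative between them is exactly the $(-\cc'/2)$-th power of \eqref{eqn-reg-determinant-laplacian}. The key structural observation, already recorded in Lemma~\ref{lem-measurable} and Lemma~\ref{lem-alpha-law}, is that everything in sight — the event $\cS^\ep = \cS$ for a fixed dyadic partition $\cS$, and the regularized field $h^\ep = h^{\cS}$ on that event — depends on $h$ (and on $\cc$, equivalently $Q$) only through the countable collection of rescaled averages $h_S/Q$, and in a way whose functional form is \emph{$Q$-independent}. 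Thus conditioning on $\{\#\cS^\ep = n\}$ and then decomposing over the (countably many) partitions $\cS$ with $\#\cS = n$, it suffices to prove the identity after further conditioning on the event $E_{\cS} := \{\cS^\ep = \cS\}$ for each such $\cS$; the weight \eqref{eqn-reg-determinant-laplacian} is itself a function of $h^\ep$, hence of the same data, so the reweighting respects this decomposition.

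On the event $E_{\cS}$, Lemma~\ref{lem-alpha-law} expresses $h^{\cS}$ as a constant plus $\sum_{S\in\cS}\alpha^{\cS}_S f^{\cS}_S$ with $\{\alpha^{\cS}_S\}$ jointly independent standard Gaussians, and the whole configuration is a $Q$-independent measurable function of the vector $v := (\alpha^{\cS}_S/Q)_{S\in\cS}$. First I would change variables from the law of $h$ to the law of this finite Gaussian vector $(\alpha^{\cS}_S)_{S}$, noting that the event $E_{\cS}$ is a measurable set in $v$-space that does \emph{not} depend on $Q$. Under $\mu_{\cc,n}$ restricted to $E_{\cS}$, the vector $(\alpha^{\cS}_S)_S$ is a standard Gaussian conditioned to lie in the set $Q\cdot\{v : v \in E_{\cS}\}$; equivalently, $v = (\alpha^{\cS}_S/Q)_S$ has, up to normalization, the density proportional to $\exp\!\big(-\tfrac{Q^2}{2}\sum_S v_S^2\big)\mathbbm 1_{E_{\cS}}(v)$ against Lebesgue measure $dv$. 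Meanwhile, by orthonormality of the $f^{\cS}_S$ in the Dirichlet inner product, $\sum_S v_S^2 = \sum_S (\alpha^{\cS}_S/Q)^2 = \frac{1}{(2\pi)Q^2}\int_{\BB C}|\nabla h^\ep|^2\,dz$ minus the contribution of the additive constant (which drops out of the gradient), so that
\[
\exp\!\Big(-\tfrac{Q^2}{2}\sum_S v_S^2\Big) \;=\; \exp\!\Big(-\tfrac{1}{4\pi}\int_{\BB C}|\nabla h^\ep|^2\,dz\Big).
\]
Hence the $v$-density of $\mu_{\cc,n}|_{E_{\cS}}$ is proportional to $\exp\!\big(-\tfrac{1}{4\pi}\int|\nabla h^\ep|^2\big)\mathbbm 1_{E_{\cS}}(v)\,dv$, with $h^\ep$ a fixed ($Q$-independent) function of $v$, and the only place $Q$ — i.e.\ $\cc$ — enters is through the normalizing constant.

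The identity then falls out by matching exponents. The $(-\cc'/2)$-th power of \eqref{eqn-reg-determinant-laplacian} is a deterministic constant times $\exp\!\big(\tfrac{\cc'}{24\pi}\int_{\BB C}|\nabla(2h^\ep/Q)|^2\,dz\big) = \text{const}\cdot\exp\!\big(\tfrac{\cc'}{6\pi Q^2}\int|\nabla h^\ep|^2\big)$. Multiplying the density above by this and writing $Q^2 = Q^2(\cc)$, the exponent becomes $\big(-\tfrac{1}{4\pi} + \tfrac{\cc'}{6\pi Q(\cc)^2}\big)\int|\nabla h^\ep|^2$; I would then verify, using $\cc = 25 - 6Q^2$ so that $Q(\cc)^2 = (25-\cc)/6$ and $Q(\cc+\cc')^2 = (25-\cc-\cc')/6$, that this coefficient equals exactly $-\tfrac{1}{4\pi}\cdot\tfrac{Q(\cc)^2}{Q(\cc+\cc')^2}$, which — after again using orthonormality to rewrite $\int|\nabla h^\ep|^2$ in terms of $\sum_S (\alpha^{\cS}_S/Q(\cc+\cc'))^2$ — is precisely the exponent appearing in the $v$-density of $\mu_{\cc_{\op{new}},n}|_{E_{\cS}}$. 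Since the deterministic prefactor and the two normalizing constants are all absorbed when one normalizes to a probability measure, and since $E_{\cS}$ and the map $v\mapsto h^\ep$ are identical for both values of the central charge, the reweighted law agrees with $\mu_{\cc_{\op{new}},n}|_{E_{\cS}}$ on each $E_{\cS}$; summing over the partitions $\cS$ with $\#\cS=n$ gives the theorem.

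The main obstacle — and the point requiring the most care — is the bookkeeping of normalizing constants and the justification that the decomposition over partitions $\cS$ is legitimate: one must check that $\mu_{\cc,n}$ is well-defined (i.e.\ $\P[\#\cS^\ep = n] > 0$ and finite-dimensional conditioning makes sense) and that both the weight \eqref{eqn-reg-determinant-laplacian} and its $\cc$-dependent normalizing integral are finite and measurable, so that Fubini/disintegration over the countable family $\{E_{\cS}\}$ is valid. The scale-invariance subtlety flagged after Proposition~\ref{prop_polyakov_alvarez} — that the second and third Polyakov--Alvarez terms vanish for our normalization by Gauss--Bonnet together with the choice of $C$ — is what guarantees that \eqref{eqn-reg-determinant-laplacian} really is (a deterministic constant times) the full zeta-regularized determinant; I would invoke that computation rather than redo it. Everything else is the linear-algebraic identity $-\tfrac14 + \tfrac{\cc'}{6 Q(\cc)^2} = -\tfrac14\,\tfrac{Q(\cc)^2}{Q(\cc+\cc')^2}$, which is immediate from $6Q^2 = 25-\cc$.
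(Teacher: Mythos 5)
Your overall strategy coincides with the paper's: decompose over dyadic partitions $\cS$ with $\#\cS = n$, use Lemmas~\ref{lem-measurable} and~\ref{lem-alpha-law} to reduce to the finite-dimensional vector $v_S = \alpha^{\cS}_S/Q$ (whose density is $\propto \prod_S e^{-Q^2 v_S^2/2}$ on the $Q$-independent set $\Lambda_{\cS}$, with the metric a $Q$-independent function of $v$), and match Gaussian exponents. Up through the identification of the $v$-density of $\mu_{\cc,n}|_{E_{\cS}}$ your argument is correct and is the paper's. But the final exponent-matching step --- the heart of the proof --- fails. The identity you invoke, $-\tfrac14 + \tfrac{\cc'}{6Q(\cc)^2} = -\tfrac14\,\tfrac{Q(\cc)^2}{Q(\cc+\cc')^2}$, is false: using $6Q^2 = 25-\cc$, the left side equals $-\tfrac14\cdot\tfrac{25-\cc-4\cc'}{25-\cc}$ while the right side equals $-\tfrac14\cdot\tfrac{25-\cc}{25-\cc-\cc'}$ (take $\cc=0$, $\cc'=6$: $-1/100$ versus $-25/76$). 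The target is also misidentified: since the common coordinate in which both laws and the map to $g$ must be compared is $v$, the $v$-density of $\mu_{\cc+\cc',n}|_{E_{\cS}}$ is $\propto \exp\bigl(-\tfrac{Q(\cc+\cc')^2}{2}\sum_S v_S^2\bigr)$; written as a multiple of $\int|\nabla h^\ep|^2 = 2\pi Q(\cc)^2\sum_S v_S^2$, its coefficient is $-\tfrac{1}{4\pi}\cdot\tfrac{Q(\cc+\cc')^2}{Q(\cc)^2}$, the \emph{reciprocal} ratio of the one you wrote. The root of the confusion is your assertion that $h^\ep$ is a $Q$-independent function of $v$: it is not (only $h^\ep/Q$ is), so $\int|\nabla h^\ep|^2$ is a $Q$-dependent functional of $v$ and is the wrong quantity to carry through the comparison.

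The clean computation is the one the paper does: the weight must satisfy $W\cdot\prod_S e^{-Q^2v_S^2/2} \propto \prod_S e^{-Q_{\op{new}}^2 v_S^2/2}$, i.e.\ $W = \exp\bigl(\tfrac{Q^2 - Q_{\op{new}}^2}{2}\sum_S v_S^2\bigr) = \exp\bigl(\tfrac{\cc'}{12}\sum_S v_S^2\bigr)$ since $Q^2 - Q_{\op{new}}^2 = \cc'/6$. With the conformal factor $\sigma = h^\ep/Q + C$ in the Polyakov--Alvarez formula, the Dirichlet-energy term is $-\tfrac{1}{12\pi}\int|\nabla(h^\ep/Q)|^2 = -\tfrac16\sum_S v_S^2$ by orthonormality, and its $(-\cc'/2)$-th power is exactly $\exp(\tfrac{\cc'}{12}\sum_S v_S^2)$, as required. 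Note that reading the displayed formula~\eqref{eqn-reg-determinant-laplacian} completely literally (with $2h^\ep/Q$ inside the gradient, which is inconsistent with $g = e^{2(h^\ep/Q+C)}g_0$ in Proposition~\ref{prop_polyakov_alvarez}) produces $\exp(\tfrac{\cc'}{3}\sum_S v_S^2)$, off by a factor of $4$ in the exponent; no algebraic identity of the kind you propose can absorb this discrepancy, and the fix is to use the correct conformal factor $\sigma = h^\ep/Q + C$ rather than $2h^\ep/Q$.
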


\begin{proof}[Proof of Theorem~\ref{thm-laplacian-reweighting}]
Fix a partition $\cS$ of $\BB{S}$ into $n$ dyadic squares.  
By Lemmas~\ref{lem-measurable} and~\ref{lem-alpha-law}, the following are all measurable functions of the set of variables $\{\alpha^{\cS}_S/\qq\}_{S \in \cS}$ that do not depend on the matter central charge $\cc$:
\begin{itemize}
    \item the function $h^{\cS}/Q$,
    \item the event that $\cS^{\ep} = \cS$, and
    \item the additive factor $C$ defined in Definition~\ref{def-lqg-surface-regularized}.
\end{itemize}

The metric $g$ is itself a $\cc$-independent measurable function of $h^{\ep}/Q$ and $C$.  Hence, to prove the theorem, it suffices to consider what happens when we re-weight the law $\nu_{\cc,\cS}$ of the variables $\{\alpha^{\cS}_S/\qq\}_{S \in \cS}$ restricted to the event that  $\cS^{\ep} = \cS$. Specifically, it is enough to show that the law $\nu_{\cc,\cS}$ re-weighted by the $(-\cc'/2)$-th power of~\eqref{eqn-reg-determinant-laplacian} is equal to the law $\nu_{\cc_{\op{new}}, \cS}$ up to a constant independent of the choice of $\cS$ (but which may depend on $n,\cc,\cc'$).

We can write the law $\nu_{\cc, \cS}$ explicitly: up to a normalizing constant depending on $n$ and $\cc$, it is 
\[
\prod_{S \in \cS} e^{-\qq^2 x_S^2/2} \BB{1}_{ (x_S) \in \Lambda_{\cS} },
\]
where $\Lambda_{\cS}$ denotes the set of values of the $x_S$ for which, setting $\alpha_S/\qq = x_S$ for each $S \in \cS$, we obtain 
$\cS^{\ep} = \cS$.  (Recall from the beginning of the proof that $\Lambda_{\cS}$ is independent of matter central charge.)
The re-weighted law is, up to a normalizing constant depending on $n,\cc,\cc'$ but independent of $\cS$,
\[
\prod_{S \in \cS} e^{\frac{\cc'}{12} x_S^2} e^{-\qq^2 x_S^2/2} \BB{1}_{ (x_S)_{S \in \cS} \in \Lambda_{\cS} }
\]
or
\eqb
\prod_S e^{\left(\frac{\cc'}{6 } - \qq^2\right)  x_S^2/2} \BB{1}_{ (x_S)_{S \in \cS} \in \Lambda_{\cS} },
\label{eqn-reweighted-law-qnew}
\eqe
The expression $\qq^2 - \frac{\cc'}{6}$ is equal to the background charge $\qq_{\op{new}}$ corresponding to $\cc_{\op{new}}$.  Hence, the law~\eqref{eqn-reweighted-law-qnew} equals $\nu_{\cc_{\op{new}},\cS}$ up to a normalizing constant depending on $n,\cc,\cc'$ but independent of $\cS$.

\end{proof}

\subsection{Interpreting our result for $\cc < 1$}
\label{sec-interpretation1}

We introduced in Definition~\ref{def-lqg-surface-regularized} an object $(M,g)$, which we interpret as a quantum regularization of an LQG surface. It is known in various $c < 1$ contexts that an LQG surface {\em conditioned} to have large area $A$ (and then rescaled to make the total area some deterministic constant) converges in the $A \to \infty$ to an object called the {\em quantum sphere}, as defined in \cite{wedges, ahs-sphere, sphere-constructions, dkrv-lqg-sphere}.  It is natural therefore to expect that the objects $(M,g)$, conditioned to have a large number of squares, would approximate quantum spheres as well. Although we will not formally prove this, we state it as a claim below and provide some further justification:

\begin{claim}
\label{claim-sphere}
Let $(M,g)$ be as in Definition~\ref{def-lqg-surface-regularized} for $\cc \in (-\infty,1)$.  Then the law of $(M, g)$ for $\ep = 1$ conditioned on  $\# \cS^{1} = n$ converges as $n \rta \infty$ (with $\ep = 1$ fixed) to a quantum sphere with volume some constant deterministic constant---defined, e.g., in~\cite{wedges, ahs-sphere, sphere-constructions, dkrv-lqg-sphere}---in the topology of convergence of quantum surfaces described in~\cite{shef-zipper}.
\end{claim}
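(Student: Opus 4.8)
The plan is to verify that, in the $n \to \infty$ limit with $\ep = 1$ fixed, the field $h^{\ep}$ used to build $(M,g)$ converges (in a suitable sense of quantum surfaces) to the field of a quantum sphere of fixed volume, and then to transfer this field-level convergence to the metric level. First I would recall that, by Lemma~\ref{lem-alpha-law}, the regularized field $h^{\ep}$ is the $L^2$-projection of the ambient GFF $h$ onto the finite-dimensional space spanned by $\{\Delta^{-1}\tilde{\mathbbm 1}_S\}_{S \in \cS^{\ep}}$, plus a normalizing constant; equivalently, $h^{\ep}$ is the conditional expectation of $h$ given the square averages $\{h_S\}_{S \in \cS^{\ep}}$. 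As $n \to \infty$, the $\ep = 1$ square subdivision $\cS^{1}$ refines: the largest square has quantum size bounded by $1$, and for $\cc < 1$ the Euclidean sizes of all squares tend to $0$ almost surely on the event $\#\cS^{1} = n$ as $n \to \infty$, since a large number of squares forces them to be small. Because the projection subspace grows to be dense in $H^1_0$ along such a refining sequence, $h^{\ep} \to h$ in the space of distributions; moreover the normalizing constant $C$ in Definition~\ref{def-lqg-surface-regularized} converges to the corresponding recentering of $h$.

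The key structural input is that conditioning $\#\cS^{1} = n$ is, by Definition~\ref{def-squares}, conditioning the ambient GFF on the event that its square-subdivision process terminates after producing exactly $n$ squares, and one checks that for $\cc < 1$ this event has positive probability that, after the $n \to \infty$ limit, amounts to conditioning on the (approximate) quantum area $A_h(\BB{S})$ of the base square — and more globally the quantum area of the whole sphere — being of a prescribed order. Thus the next step is to compare the conditioned law of $h$ (together with the recentering $C$) to the law of the field of a unit-volume quantum sphere. The quantum sphere of fixed volume, as constructed in~\cite{wedges, ahs-sphere, sphere-constructions, dkrv-lqg-sphere}, is obtained precisely by taking a GFF-type field on $\wh{\BB C}$, adding the appropriate $\gamma \log|\cdot|$ singularities at the marked points, and conditioning the $\gamma$-LQG area to equal a constant; the limit of LQG surfaces of large area $A$ rescaled to unit volume is this quantum sphere. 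I would argue that the square-count conditioning, in the $n \to \infty$ limit, is absolutely continuous with respect to (indeed asymptotically equivalent to) area conditioning, using the heuristic $A_h(S) \approx \ep$ made rigorous via the continuity estimates in the style of~\cite{ghm-kpz, shef-kpz} for the square-average mollification: the number of squares is comparable to the total LQG area divided by $\ep^{\gamma Q}$, so fixing $n$ pins down the area up to lower-order fluctuations, which wash out in the limit.

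Finally, to obtain convergence of $(M, g)$ itself — not just of the underlying field — in the topology of convergence of quantum surfaces of~\cite{shef-zipper}, I would combine the distributional convergence $h^{\ep} + QC \to (\text{quantum sphere field})$ with the fact, emphasized in Section~\ref{sec-reg-intuition}, that the regularized metric $e^{2h^{\ep}/Q}(dx^2+dy^2)$ is itself well-defined on equivalence classes of quantum surfaces and converges to the (heuristic) LQG metric whenever the field converges. More concretely, one shows the regularized LQG measure $\ep^{\gamma^2/2} e^{2h^{\ep}/Q}\,\Vol_{g_0}$ converges weakly to the $\gamma$-LQG measure of the quantum sphere, and likewise for the distance function via the Liouville-graph-distance comparison alluded to after Definition~\ref{def-metric-regularized}; convergence in the relevant quantum-surface topology then follows. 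The main obstacle, and the reason this is stated only as a claim, is the last point: proving that conditioning on $\#\cS^{1} = n$ really does converge to (unit-volume) area conditioning requires quantitative control on the fluctuations of the number of squares around its mean — essentially a law-of-large-numbers with error bounds for $\#\cS^{\ep}$ in terms of the LQG area — together with enough uniformity to pass these estimates through the non-linear map from $h$ to the metric $g$ and its distance function; this is exactly the kind of uniform control the paper declines to carry out here.
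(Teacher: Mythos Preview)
Your proposal follows essentially the same route as the paper's own justification: both reduce the claim to showing that conditioning on $\#\cS^1 = n$ for large $n$ is asymptotically the same as conditioning the LQG area to be large, after which the known construction of the quantum sphere as a large-area limit (the paper invokes \cite[Proposition~A.11]{wedges} specifically) finishes the job; and both explicitly flag this square-count-versus-area equivalence as the unproven step. Two minor differences are worth noting. First, the paper is more concrete about the target topology: it spells out a canonical embedding of both $(M,g)$ and the quantum sphere into the cylinder $\cC = \R \times S^1$ (via a volume-sampled marked point and a normalization of horizontal translation and rotation) and phrases the claim as vague convergence of the induced volume measures on $\cC$; your discussion of distance-function convergence via a Liouville-graph-distance comparison is extraneous here, since the quantum-surface topology of \cite{shef-zipper} concerns fields/measures rather than metrics. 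Second, your preliminary step ``$h^\ep \to h$ because the projection subspace becomes dense as the squares shrink'' is a plausible elaboration the paper leaves implicit, though note it would require knowing that \emph{all} squares in $\cS^1$ have small Euclidean size on $\{\#\cS^1 = n\}$ for large $n$, which is itself not obvious and is part of the same circle of unproven estimates.
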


As we noted at the beginning of Section~\ref{section-approx-surface}, the metric $g$ is normalized to have asymptotically constant total volume, so that we can expect to obtain an LQG surface with constant total volume in the limit.

\begin{proof}[Justification of Claim~\ref{claim-sphere}]
First, for the reader who may not be familiar with the topology of convergence in the claim, we rephrase the statement of the claim in more concrete terms.
Let $\cC = \R \times S^1$ be the infinite cylinder, and write $\cC_+ = (0,\infty) \times S^1$. We canonically embed both our quantum-regularized LQG surface $M$ and the quantum sphere in $\cC$ as follows:
\begin{itemize}
\item Choose $z \in M$ from the volume measure on $(M, g)$. There exists a conformal map from $\wh \C$ to $\cC$ sending $\infty \mapsto -\infty$ and $z \mapsto +\infty$. This map is unique up to horizontal translation and rotation; we fix the horizontal translation by specifying that the volume of $\cC_+$ under the measure induced by the embedding is $\frac12$, and we choose the rotation on $S^1$ uniformly at random. This gives us a parametrization of $(M, g)$ in $\cC$.  Set $\mu_n$ to be the volume measure on $\cC$ under this embedding.

\item We embed the quantum sphere in $\cC$ by sending its two marked points to $\pm \infty$, choose the rotation on $S^1$ uniformly at random, and fix the horizontal translation so $\cC_+$ has area $\frac12$ under the volume measure $\mu$ on $\cC$ induced by the embedding.
\end{itemize}

The content of Claim~\ref{claim-sphere} is that the measures $\mu_n$ converge in the vague topology in law to the measure $\mu$.%

To justify Claim~\ref{claim-sphere}, we recall from~\cite[Proposition A.11]{wedges} that the quantum sphere with total volume $k$ can be defined by the following limiting procedure. Consider a  zero-boundary GFF $h$ on some smooth domain $D$ conditioned on the event that the induced LQG measure of $D$ is in some large range $[kA,kA(1+\delta)]$ for fixed large $A>0$ and small $\delta>0$.   In other words, we condition the ``normalized'' field $\wh{h} = h - \gamma^{-1} \log{A}$ on the event that the induced LQG measure of $D$ is between $k$ and $k+\delta$.  Then the  quantum sphere is obtained by taking a weak limit, first as $A \rta \infty$ and then $\delta \rta 0$, of the pair $(D,\wh{h})$ in the topology of convergence of quantum surfaces. 
In other words, if we embed $(D, \mu_{\wh h})$ onto a subset of $\cC$ (using the earlier procedure we used to embed $(M, g)$),  then~\cite[Proposition A.11]{wedges} asserts that the measure on $\cC$ induced by this embedding converges  in the vague topology in law to the measure $\mu$ defined above.

There are some differences between the limiting procedures in~\cite[Proposition A.11]{wedges} and Claim~\ref{claim-sphere}.  Perhaps the most technically annoying difference is that conditioning on the number of squares in $\cS^{\ep}$ to be large is not exactly the same as conditioning the LQG measure of the domain to be large.  However, the LQG measure corresponding to the pair $(D,\wh{h})$ for $C$ large is heuristically related to the measure induced by the metric $\wh{g}$ defined in Claim~\ref{claim-sphere} for $n$ large. We believe that, with some technical effort, one could show that the two limiting procedures should yield the same object.
\end{proof}

Thus, we can view Theorem~\ref{thm-laplacian-reweighting} as establishing a heuristic correspondence between two one-parameter family of geometries  indexed by $\cc < 1$: 
\begin{itemize}
    \item 
planar maps with the topology of the sphere sampled with probability proportional to $(\det \Delta)^{-\cc/2}$, and 
\item LQG surfaces (specifically, constant-volume quantum spheres) regularized at a quantum scale, as defined in Definition~\ref{def-metric-regularized}.
\end{itemize}

\subsection{Interpreting our result for $\cc \in (1,25)$}
\label{sec-interpretation2}

We now address the case of matter central charge greater than $1$.\footnote{The following discussion also appears in some form in~\cite[Section 2.2]{ghpr-central-charge}.} From a physics perspective, LQG should also make sense for values of $\cc$ greater than 1, but the geometric behavior of LQG in this regime is still mysterious.  The classical Euclidean-scale regularization~\eqref{def-measure} of the LQG measure does not clearly extend to $\cc > 1$ because the parameter $\gamma$ is no longer real.

In contrast, the quantum-scale regularization approach we defined in Definition~\ref{def-squares}, in which we subdivided the domain into a collection $\cS^{\ep}$ of squares of roughly equal ``quantum size''~\eqref{eqn-mass-def}, is well-defined whenever the background charge $Q$ is real and nonzero, i.e., for all $\cc < 25$.  However, there is a crucial difference in the distribution of this set of squares $\cS^{\ep}$ in the regimes $\cc \leq 1$ and $\cc > 1$.  When $\cc$ is at most $1$, the number of squares in $\cS^{\ep}$ is almost surely finite for any fixed $\ep > 0$.  On the other hand, when $\cc > 1$, the number of squares in $\cS^{\ep}$ is infinite with probability tending to $1$ as $\ep > 0$. (Roughly speaking, this corresponds to the fact that the set of thick points of thickness greater than $Q$ has positive dimension; see~\cite{ghpr-central-charge} for details.)  

As a result, the square subdivision approach suggests \emph{two different natural extensions} of LQG to a one-parameter family of geometries indexed by $\cc < 25$.  If we define a model for LQG for $\cc > 1$ by regularizing at a quantum scale (as we have done by decomposing the domain into the set of squares $\cS^{\ep}$), we have two choices: we can either
\begin{itemize}
\item
condition the number of squares to be finite---which we will call imposing the \emph{finite-volume condition}---or
\item not impose any such conditioning and allow the set of squares $\cS^{\ep}$ to be infinite. 
\end{itemize}
The construction of the quantum-regularized LQG surface in Definition~\ref{def-lqg-surface-regularized} imposes the finite-volume condition; i.e., we condition the number of squares in $\cS^{\ep}$ to be finite.  In contrast, the paper~\cite{ghpr-central-charge} defines a random planar map model for ``LQG with $\cc \in (1,25)$'' using essentially the same square subdivision procedure as in Definition~\ref{def-squares},\footnote{Specifically, they consider the random planar map given by the adjacency graph of squares, with two squares considered adjacent if their edges intersect along a positive-length line segment.} but they do \emph{not} impose the finite-volume condition on the collection of squares in their version of quantum-regularized LQG.
Theorem~\ref{thm-laplacian-reweighting} implies that the one-parameter family of random planar maps weighted by $(\det \Delta)^{-\cc/2}$  should describe the same limiting continuum geometry as quantum-regularized LQG surfaces \emph{with} the finite-volume condition. See Figure~\ref{fig-correspondence}.

\begin{figure}[t!]
\begin{center}
\centering
        \includegraphics[width=\textwidth]{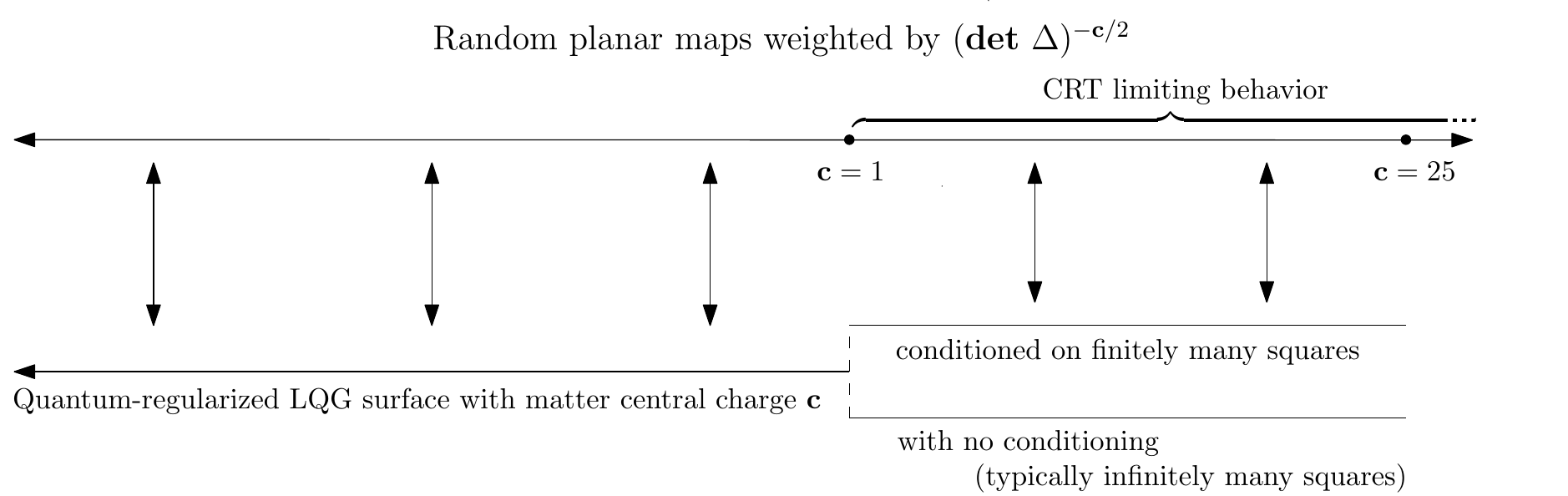}
    \caption{
When $\cc > 1$, we can approximate LQG by subdividing into squares of roughly equal ``quantum size'' in two different ways: by conditioning the number of squares to be finite (the \emph{finite-volume condition}), or by allowing the number of squares to be infinite.  These two approaches are the same for $\cc \leq 1$ since the number of squares is finite almost surely.  The arrows in the figure indicate ``heuristic similarity.''  In light of Theorem~\ref{thm-laplacian-reweighting}, we expect a random planar map weighted by $(\det \Delta)^{-c/2}$ to be heuristically similar to a quantum-regularized LQG surface with matter central charge $c$ (under the finite-volume condition), at least in the sense of having the same limiting behavior. It is an open problem to construct natural random planar map models that are heuristically similar to the infinite-volume quantum-regularized LQG surfaces; see Question~\ref{ques-discrete-c-greater-than-1}.}
\label{fig-correspondence}
\end{center}
\end{figure}

We believe that imposing the finite-volume condition completely changes the geometric behavior of the (conjectural) limiting continuum surface.  As the paper~\cite{ghpr-central-charge} demonstrates, if we do not condition on the number of squares in $\cS^{\ep}$ to be finite, we should obtain in the (conjectural) Gromov-Hausdorff limit an infinite-diameter surface with infinitely many ends and infinite Hausdorff dimension.  See Figure~\ref{fig-two-geometries} for an illustration of this surface. On the other hand, once we condition on the number of squares in $\cS^{\ep}$ to be finite, we expect the geometry to degenerate to a so-called branched polymer; i.e., we expect to obtain a \textit{continuum random tree} (CRT) in the $\ep \rta 0$ limit.  We observe an analogous phenomenon in the case of supercritical Galton-Watson trees.  A supercritical Galton-Watson tree almost surely contains infinitely many edges and has infinitely many ends.  On the other hand, we can define a supercritical Galton-Watson tree conditioned to be finite---in this case, we obtain a tree with finitely many edges that looks very different from its infinite, unconditioned counterpart.

\begin{figure}[ht!]
    \centering
        \includegraphics[width=0.83\textwidth]{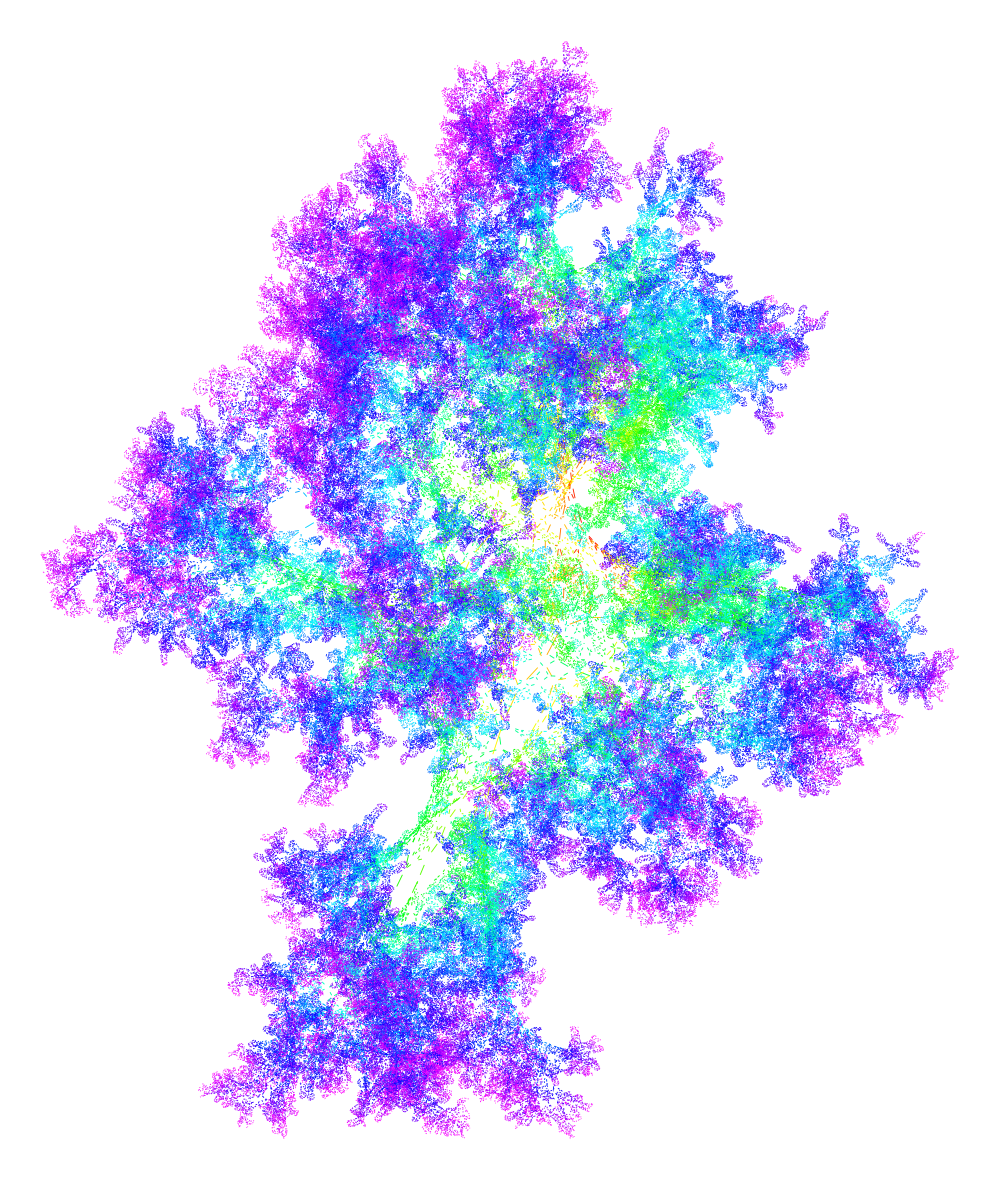}
    \caption[LoF entry]{A three-dimensional force-directed embedding of the radius 50 ball centered at the root in the graph of squares $\cS^{\ep}$ for $c=24$ and $\ep = 10^{-5}$.  To render the simulation, we took the field to be a zero-boundary GFF, and we terminated the square subdivision algorithm at squares smaller than some fixed Euclidean size.  The edges are colored in a rainbow scheme according to their distances from a fixed square, with the purple edges the edges farthest from the root.  The vast majority of edges are purple, as we expect since the volume growth of a metric ball in this random planar map is almost surely super-polynomial in the radius---an indication that the conjectural limiting continuum geometry has infinite Hausdorff dimension almost surely~\cite{ghpr-central-charge}.  (We note that the fact that most edges are purple may not be obvious from the simulation because the regions of purple edges have a relatively high density of edges.  This is similar to what one sees in simulations of a supercritical Galton-Watson tree.)}
    \label{fig-two-geometries}
\end{figure}

This dichotomy that we observe for $\cc > 1$ also explains why many physicists~\cite{cates-branched-polymer,david-c>1-barrier,adjt-c-ge1,ckr-c-ge1,bh-c-ge1-matrix,dfj-critical-behavior,bj-potts-sim,adf-critical-dimensions} who investigated the geometric behavior LQG for $\cc > 1$ observed, through numerical simulations and heuristics, that LQG in this regime seems to behave like a branched polymer.  This is an unsatisfying conclusion from a conformal field theory perspective, since this geometry does not depend on $\cc$ and cannot be related to conformal field theory models (since trees have no conformal structure).  In light of Theorem~\ref{thm-laplacian-reweighting}, it is  not surprising that they reached this conclusion, since our theorem heuristically suggests that weighting random planar maps by $(\det \Delta)^{-\cc/2}$ describes the same limiting geometry that we get when we impose  the finite-volume condition on our quantum-regularized LQG surfaces.  In other words, when $\cc >1$, the approximation of LQG by random planar maps weighted by $(\det \Delta)^{-\cc/2}$  is \emph{not} a good heuristic for the geometry of unconditioned quantum-regularized LQG surfaces.

\section{Open questions} \label{sec-open}

The perspective we describe in this paper suggests many interesting open questions to explore.

First, as noted in Section~\ref{sec-intro}, a natural approach to making the heuristic of ``Definition''~\ref{def-original} precise is to define LQG as a limit (in some sense) of classes of random planar maps sampled with probability proportional to the the $(-\cc/2)$-th power of the determinant of its discrete Laplacian.  The missing element we require to implement this approach is an affirmative answer to the following open question:

\begin{ques}
\label{ques-convergence}
Consider a random planar map---or, alternatively, a $p$-angulation for some $p$---with the topology of the sphere with exactly $n$ edges, sampled with probability  proportional to the $(-\cc/2)$-th power of the determinant of its discrete Laplacian.  Can one prove {\em in any sense} that this random planar map converges in the suitably rescaled $n \rta \infty$ limit to an LQG sphere with matter central charge $\cc$? Is there an analogue of this statement that holds for random planar maps with the topology of the disk?
\end{ques}

The convergence in Question~\ref{ques-convergence} can be interpreted in several different ways: for instance, one can consider convergence in the Gromov-Hausdorff topology, weak convergence of the random measure induced by some ``discrete conformal embedding'' (see, e.g.,~\cite{hs-cardy-embedding}), or some topology that encodes spanning tree or loop behavior. It would be interesting to prove convergence in any of these topologies.

Now, suppose we had a convergence result for any of the random planar map models considered in Question~\ref{ques-convergence} with the topology of the disk.  We could then ask about the limiting behavior of discrete analogues of Brownian loop soups on these models, such as those we described in Section~\ref{sec-discrete}. We know from~\cite{shef-werner-cle} that the set of outer boundaries of a $c$-intensity Brownian loop soup for $c \in (0,1)$ has the law of a so-called \emph{conformal loop ensemble} (defined, e.g., in~\cite{shef-cle}).  We therefore ask whether the outer boundaries of the discrete loop soups converge to a set of self-avoiding loops with this same law.

\begin{ques}
\label{ques-cle}
Suppose that we can give an affirmative answer to Question~\ref{ques-convergence} for some random planar map model with the topology of the disk. Can we show that, if we decorate this model by some discrete analogue of the $c$-intensity Brownian loop soup for $c \in (0,1)$, then the boundaries of the loop soup clusters converge in some sense to the conformal loop ensemble corresponding to that value of $c$?
\end{ques}

We can also consider whether we can justify ``Definition''~\ref{def-original}  directly in the continuum without having to regularize the LQG surface, as we have done in this paper. One can define a loop measure on an LQG surface in terms of the Brownian motion associated to the heuristic LQG metric, which has been rigorously defined and is called \emph{Liouville Brownian motion} (see, e.g.,~\cite{grv-lbm, berestycki-lbm, rhodes-vargas-criticality}).  
We can then consider the law of an LQG surface weighted by a regularized mass of ``Liouville Brownian loops''---i.e., the measure of Brownian loops regularized as in Theorem~\ref{thm-loop-det} or Proposition~\ref{prop-decay}, but with its quadratic variation measured in the Liouville sense.

\begin{ques}
\label{ques-lqg-loop-soup}
Can we justify ``Definition''~\ref{def-original} by directly weighting a LQG surface (e.g., an LQG sphere) by a regularized mass of ``Liouville Brownian loops'' and taking a limit?
\end{ques}

Perhaps an easier task would be to strengthen the result of Theorem~\ref{thm-laplacian-reweighting} by proving Claim~\ref{claim-sphere}.

\begin{ques}
Can we prove Claim~\ref{claim-sphere}?
\end{ques}

A proof of Claim~\ref{claim-sphere} would strengthen the story of Theorem~\ref{thm-laplacian-reweighting} illustrated by Figure~\ref{RLQGchart} by strengthening the upward arrows in the figure (for $\cc \leq 1$).  As noted in Section~\ref{sec-interpretation1}, one must check that conditioning on the square subdivision consisting of a large number of squares is similar to conditioning on the unit square having large LQG area in the limit.

We can also try to strengthen the story of Theorem~\ref{thm-laplacian-reweighting} to obtain a result about weighting by powers of the Brownian loop soup partition function. As we discussed at the end of Section~\ref{sec-relate}, we need to weight by the \emph{(area and boundary length)-adjusted} mass of Brownian loops with quadratic variation between $\epsilon$ and $C$; i.e., we need to discard the leading order terms of the expansion in Theorem~\ref{thm-loop-det}. Theorem~\ref{thm-laplacian-reweighting} already implies that the Radon-Nikodym derivative given by this adjusted truncated mass of Brownian loops converges pointwise, but we do not have enough uniform control to obtain convergence in law; see the discussion at the end of Section~\ref{sec-relate}.

\begin{ques}
\label{ques-loop-version}
Can we show that weighting the law $\mu_{\cc,n}$ of Theorem~\ref{thm-laplacian-reweighting} by $\cc'$ times the (area and boundary length)-adjusted mass of Brownian loops on the surface with quadratic variation between $\epsilon$ and $C$ converges (as $\ep \rta 0$ and $C \rta \infty$) to the law $\mu_{\cc + \cc',n}$?
\end{ques}

Another interesting avenue to explore is a  rigorous Laplacian determinant reweighting story for SLE, given the connection between the partition function of SLE and the zeta-regularized determinant of the Laplacian  (see, e.g.,~\cite[Definition 3.1]{dubedat-coupling}).  For instance, we pose the following question.

\begin{ques}
Suppose that $\eta$ is a random curve in some discrete setting---either a fixed lattice or a random planar map---whose scaling limit is $\SLE_\kappa$ for some $\kappa$.  Can we show that, if we \emph{weight} the law of $\eta$ by $e^{\lambda m(\eta)}$ for $m(\eta)$ some multiple of the (regularized) mass of Brownian loops intersecting $\eta$, then the scaling limit of $\eta$ with this weighted law is $\SLE$ with a different value of $\kappa$?
\end{ques}
A more concrete version of this question is given in \cite[Section 6]{kozdron2007configurational}, where they define the ``$\lambda$-SAW'', a one-parameter family of measures on self-avoiding walks in $\Z^2$: roughly speaking, $\eta$ is a uniform self-avoiding walk, and the Brownian loops are ``random walk loops''. The $\lambda$-SAW is conjectured to converge in the scaling limit to $\mathrm{SLE}_\kappa$ for some explicit $\kappa = \kappa(\lambda)$; this has only been proved for $\lambda = 1$ (corresponding to the convergence of loop-erased random walks to $\mathrm{SLE}_2$).

Next, we turn to the case of LQG with matter central charge $\cc \in (1,25)$. The recent paper~\cite{ghpr-central-charge} proposed a random planar map model for LQG in this phase that describes an infinite volume surface with a.s. infinitely many ends.  In Section~\ref{sec-interpretation2}, we explained why this model differs from predictions in the physics literature that LQG in this phase should behave like the CRT.  In our explanation, we claimed that our model that we considered in Theorem~\ref{thm-laplacian-reweighting} should converge to a CRT, since we conditioned the number of squares in the associated square subdivision to be finite.  It would be interesting to prove some version of this claim:

\begin{ques}
\label{ques-crt}
Can we prove that, if $\cc \in (1,25)$, then the regularized LQG surfaces of Theorem~\ref{thm-laplacian-reweighting}, in which we impose the finite-volume condition, converge in law as $\ep \rta 0$ to a CRT? Alternatively, can we prove this convergence for the adjacency graph of the squares in Definition~\ref{def-squares}---with two squares adjacent if they intersect along a non-trivial connected line segment---in which we condition the set of squares to be finite? (The latter model more closely resembles that proposed in~\cite{ghpr-central-charge}, but with the crucial difference that the set of squares is conditioned to be finite.)
\end{ques}

Next, we would like to understand how the candidate picture of LQG described by the model in~\cite{ghpr-central-charge} relates to sampling according to powers of $\det \Delta_{\op{graph}}$. If simply sampling a random planar map with probability proportional to $(\det \Delta_{\op{graph}})^{-\cc/2}$ should ``degenerate'' to a CRT in the limit, then perhaps it is possible to obtain a more interesting limiting continuum geometry by imposing some conditioning on the random planar map to enforce nondegeneracy. One possible candidate for such a model is a random planar map metric ball of radius $r$, conditioned on the ratio of its boundary length to its number of edges lying in some fixed interval independent of $r$.

\begin{ques}
\label{ques-discrete-c-greater-than-1}
Can one identify a random planar map model that is natural in the discrete setting (not one, like that considered in~\cite{ghpr-central-charge}, that is defined in terms of the continuum free field) that converges to the conjectured infinite-volume continuum $\cc \in (1,25)$ surfaces described in~\cite{ghpr-central-charge}?   This random planar map model should also make sense for $\cc \leq 1$ and converge to LQG with matter central charge $\cc$ in that range.
\end{ques}

Finally, we would like to extend the result of Theorem~\ref{thm-laplacian-reweighting} to higher-genus LQG surfaces.

\begin{ques}
Can the result of Theorem~\ref{thm-laplacian-reweighting} for surfaces with the topology of the sphere be extended to LQG surfaces of higher genus?
\end{ques}

The hard part is understanding what the {\em a priori} law of the conformal class should be for each $\cc$, and then showing that weighting by the determinant of the Laplacian changes that law in the appropriate way. We note that recently, Guillarmou, Rhodes and Vargas \cite{grv-higher-genus} gave a rigorous construction of LQG on surfaces of higher genus corresponding to Polyakov's formulation \cite{polyakov-qg1}, including a description of the law of the conformal class in terms of moments of certain Gaussian multiplicative chaos measures \cite[Theorem 5.1]{grv-higher-genus}.

\bibliographystyle{hmralphaabbrv}
\addcontentsline{toc}{section}{References}
\bibliography{cibib,bibmore}

\end{document}